\documentclass[11pt]{article}
\usepackage[margin=1in]{geometry}
\usepackage{graphicx,xcolor,cite,mathtools}
\usepackage{amssymb,amsmath,amsthm,mathrsfs,paralist,bm,esint,setspace}
\RequirePackage[colorlinks,citecolor=blue,urlcolor=blue]{hyperref}

\newcommand{\R}{{\mathbb{R}}}
\newcommand{\E}{\mathrm{E}}

\renewcommand{\P}{\mathrm{P}}
\renewcommand{\d}{\mathrm{d}}
\newcommand{\<}{\langle}
\renewcommand{\>}{\rangle}
\newcommand{\e}{\mathrm{e}}

\newcommand{\Var}{\text{\rm Var}}
\newcommand{\lip}{\text{\rm Lip}}

\DeclareMathOperator{\Cov}{\text{\rm Cov}}

\input cyracc.def

\title{Central limit theorems for parabolic stochastic partial
	differential equations\thanks{%
	Research supported in part by  NSF grants DMS-1811181 (D.N.) and DMS-1855439 (D.K.).}}
\author{Le Chen\\Emory University\\\texttt{le.chen@emory.edu}\\
	\and
		Davar Khoshnevisan\\University of Utah\\\texttt{davar@math.utah.edu}\\
	\and\and
		David Nualart\\University of Kansas\\\texttt{nualart@ku.edu}\\
	\and
		Fei Pu\\University of Utah\\\texttt{pu@math.utah.edu}
	}
\date{May 10, 2021}

\begin{document}
\newtheorem{stat}{Statement}[section]
\newtheorem{proposition}[stat]{Proposition}
\newtheorem*{prop}{Proposition}
\newtheorem{corollary}[stat]{Corollary}
\newtheorem{theorem}[stat]{Theorem}
\newtheorem{lemma}[stat]{Lemma}
\theoremstyle{definition}
\newtheorem{definition}[stat]{Definition}
\newtheorem*{cremark}{Remark}
\newtheorem{remark}[stat]{Remark}
\newtheorem*{OP}{Open Problem}
\newtheorem{example}[stat]{Example}
\newtheorem{nota}[stat]{Notation}

\numberwithin{equation}{section}
\maketitle

\begin{abstract}
Let $\{u(t\,,x)\}_{t\ge 0, x\in \R^d}$ denote  the solution of a $d$-dimensional
nonlinear stochastic heat equation that is driven by a Gaussian noise, white in time with
a homogeneous spatial covariance that is
a finite Borel measure $f$ and satisfies Dalang's condition.
We prove two general functional central limit theorems for occupation fields  of  the form
$N^{-d} \int_{\R^d} g(u(t\,,x)) \psi(x/N)\,\d x$ as $N\rightarrow \infty$,
where $g$ runs over the class of Lipschitz functions on $\R^d$
and $\psi\in L^2(\R^d)$. The proof uses Poincar\'e-type inequalities,
Malliavin calculus, compactness arguments, and  Paul L\'evy's classical characterization of
Brownian motion as the only mean zero, continuous L\'evy process.
Our result generalizes  central limit theorems of Huang et al \cite{HuangNualartViitasaari2018,HuangNualartViitasaariZheng2019} valid when
$g(u)=u$ and $\psi = \mathbf{1}_{[0,1]^d}$.

\begin{center}
 \textbf{R\'esum\'e}
 \end{center}

Soit $\{u(t\,,x)\}_{t\ge 0, x\in \R^d}$ la solution d’une \'equation de la chaleur stochastique non-lin\'eaire $d$-
dimensionnelle, perturb\'ee par un bruit  gaussien,
blanc en temps et avec une covariance homog\`ene en espace  donn\'ee par une mesure de Borel finie qui satisfait la condition de Dalang.
Nous  d\'emontrons deux th\'eor\`emes de la  limite centrale fonctionnels pour des champs d’occupation de la forme
$N^{-d} \int_{\R^d} g(u(t\,,x)) \psi(x/N)\,\d x$ quand $N\rightarrow \infty$,
o\`u $g$ est une function lipschitzienne sur $\R^d$ et $\psi\in L^2(\R^d)$. La preuve utilise des inegalit\'es de Poincar\'e,
le calcul de Malliavin, des arguments de compacit\'e et la caract\'erisation du mouvement brownien comme le seul processus de L\'evy continu avec moyenne z\'ero.  Notre r\'esultat g\'en\'eralise les th\'eor\`emes de la  limite centrale  de Huang et al \cite{HuangNualartViitasaari2018,HuangNualartViitasaariZheng2019} qui sont valables pour  $g(u)=u$ et $\psi = \mathbf{1}_{[0,1]^d}$.

\end{abstract}

\bigskip\bigskip

\noindent{\it \noindent MSC 2010 subject classification: 60H15, 60F17, 60H07}.
\bigskip

\noindent{\it Keywords:} Stochastic heat equation, central limit theorem, Poincar\'e inequalities, Malliavin calculus, metric entropy.
\bigskip

\noindent{\it Running head:} CLT for SPDEs.

\newpage
{
 \hypersetup{linkcolor=black}
  \tableofcontents
}

\section{Introduction}

Consider the stochastic PDE
\begin{equation}\label{SHE}
	\partial_t u = \tfrac12\Delta u + \sigma(u)\eta\qquad\text{on $(0\,,\infty)\times\R^d$},
\end{equation}
subject to $u(0)\equiv1$ on $\R^d$, where $\sigma:\R\to\R$ is non random and Lipschitz continuous,
and $\eta$ denotes a centered, generalized Gaussian field whose covariance form is described formally as
\[
	\Cov[\eta(t\,,x)\,,\eta(s\,,y)] = \delta_0(t-s)f(x-y)
	\qquad\text{for all $s,t\ge0$ and $x,y\in\R^d$},
\]
for a nonnegative-definite tempered Borel measure $f$ on $\R^d$ that we fix
throughout.	To avoid triviality, throughout this paper, we assume that
	\[
		\sigma(1)\ne 0.
		\]

Somewhat more formally,
this means that the Wiener-integrals
\begin{equation}\label{W}
	W_t(\phi):=\int_{(0,t)\times\R^d}
	\phi(x)\,\eta(\d r\,\d x)
	\qquad[t\ge0, \phi\in\mathscr{S}(\R^d)]
\end{equation}
define a centered Gaussian random field with covariance,
\[
	\Cov\left[W_s(\phi_1)\,,W_t(\phi_2)\right] = (s\wedge t)
	\< \phi_1\,,\phi_2*f \>_{L^2(\R^d)}
	\qquad\text{for all $s,t\ge0$ and $\phi_1,\phi_2\in\mathscr{S}(\R^d)$},
\]
where $\mathscr{S}(\R^d)$ denotes the space of Schwartz test functions.
Thus, we can (and will) think of $\{W_t\}_{t\ge0}$ as an infinite-dimensional
Brownian motion.

Dalang \cite{Dalang1999} has proved that \eqref{SHE} has a mild solution $u$ provided that\footnote{%
	Our Fourier transform is normalized
	so that $\hat{h}(z) = \int_{\R^d}\e^{ix\cdot z}h(x)\,\d x$ for all $h\in L^1(\R^d)$
	and $z\in\R^d$.}
\begin{equation}\label{Dalang}
	\Upsilon(\lambda) := \frac{2}{(2\pi)^d}\int_{\R^d} \frac{\hat{f}(\d z)}{2\lambda+\|z\|^2}<\infty,
\end{equation}
for one, hence all, $\lambda>0$;\footnote{Caveat: Our $\Upsilon(\lambda)$ is
equal to Foondun and Khoshnevisan's $2\Upsilon(\lambda/2)$
\cite{FoondunKhoshnevisan2013}. The slight alteration
of this notation should not cause any confusion.} moreover, Dalang ({\it loc.\ cit.})
has proved that $\R_+\times\R^d\ni (t\,,x)\mapsto u(t\,,x)$ is the only predictable
random field that is continuous in  $L^k(\Omega)$ for
every  $k\ge2$. Condition \eqref{Dalang} will be in force from now on in
order to guarantee that \eqref{SHE} is well posed.

In a companion paper \cite{CKNP} we  examine the ergodic-theoretic properties
of the spatial random field $u(t)=\{u(t\,,x)\}_{x\in\R^d}$
for all $t>0$. Specifically, we  prove in \cite{CKNP} that:
\begin{compactenum}
	\item For every $t> 0$, $u(t)$ is stationary and it is  ergodic
		  if $\hat{f}\{0\}=0$. Moreover,
		the following conditions are equivalent:
		\begin{compactenum}
			\item $\hat{f}\{0\}=0$;
			\item $\hat{f}$ has no atoms;
			\item $f\{x\in\R^d:\, \|x\|<r\}=o(r^d)$ as $r\to\infty$;
			\item $u(t)$ is ergodic for all $t\ge0$ in the case that $\sigma$ is a non-zero constant;
		\end{compactenum}
	\item $u(t)$ is (weakly) mixing for every $t>0$ if
		\begin{equation}\label{eq:mixing}
			\lim_{\|x\|\to\infty} \int_{\R^d} \frac{\e^{ix\cdot z}\,\hat{f}(\d z)}{
			2\lambda+\|z\|^2}=0.
		\end{equation}
	\item Condition \eqref{eq:mixing} is necessary and sufficient for $u(t)$ to be mixing
		(for every $t>0$) in the case that $\sigma$ is a constant.
\end{compactenum}
When $\sigma$ is a non-zero constant, parts of these results simplify to well-known
ergodic-theoretic facts about stationary Gaussian processes. Specifically,
the equivalence of items 1(b) and 1(d), as well as the validity of item
3, can be found in the classical work of
Maruyama \cite{Maruyama}; see also the subsequent
exposition of Dym and McKean \cite{DymMcKean}.

As was  mentioned, $u(t)$ is ergodic for all $t>0$ if
\begin{equation}\label{fhat:0}
	\hat{f}\{0\}=0,
\end{equation}
and hence by the ergodic theorem,
\begin{equation}\label{ergodic:thm}
	\lim_{N\to\infty} \frac{1}{N^d}\int_{[0,N]^d} g(u(t\,,x))\,\d x= \E[g(u(t\,,0))]
	\qquad\text{a.s.\ for all $g\in\lip$ and $t>0$},
\end{equation}
where $\lip$ denotes the collection of all real-valued
Lipschitz-continuous functions on $\R$. The purpose of the present
article is to determine whether, and when,
\eqref{ergodic:thm} has a matching central limit theorem (CLT). In special cases ---
particularly when $g$ is linear --- such CLTs have recently been studied in Huang et al
\cite{HuangNualartViitasaari2018,HuangNualartViitasaariZheng2019}.
Our main goal is to study the non-linear case. Although our methods (see the description next paragraph) differ from
those of Huang et al ({\it ibid.}) where the Malliavin-Stein method is appealed to estimate the total variation distance, a common point is crucial use of the Malliavin calculus.

Because weak mixing implies ergodicity, it follows immediately from the above remarks
(in the case that $\sigma$ is constant) that \eqref{eq:mixing}
is a little stronger than \eqref{fhat:0}. It is also well known that mixing is by
itself not enough to ensure a CLT. Strong mixing, however,
\emph{can} imply a CLT
(see Bradley \cite{Bradley}). Unfortunately, we are not able to determine precise conditions that
ensure the strong mixing of $u(t)$. Thus, we are forced to introduce novel methods in order
to establish the existence of a CLT: By contrast with ``mixing and blocking arguments''
of the literature on strong mixing, we use Malliavin's calculus,
Poincar\'e-type inequalities, compactness arguments, and Paul L\'evy's
characterization theorem of Brownian motion as the only mean-zero, continuous L\'evy process.

Throughout, we assume that
\begin{equation}\label{f:finite}
	0< f(\R^d)<\infty.
\end{equation}
The positivity of the total mass of $f$ merely ensures non triviality.
After all, if $f(\R^d)=0$ then \eqref{SHE} is deterministic and there is nothing
left to study.
The more interesting finite-mass condition on $f$ turns out to be unimprovable
and is a slightly stronger condition than the mixing condition \eqref{eq:mixing}.
In order to see why, note that because of \eqref{f:finite} the Fourier transform of $f$ is
a uniformly bounded and continuous function defined by
\[
	\hat{f}(z) = \int_{\R^d} \e^{ix\cdot z}\,f(\d x)\qquad\text{for all $z\in\R^d$}.
\]
Therefore, \eqref{eq:mixing} is a consequence of the Riemann--Lebesgue lemma
and Dalang's condition \eqref{Dalang}.

The following summarizes our main finding in its simplest form.
\begin{theorem}\label{th:main}
	Choose and fix $t>0$ and $g\in\lip$, and
	suppose \eqref{f:finite} holds. Then,
	\begin{equation*}
		N^{d/2}\left(\frac{1}{N^d}\int_{[0,N]^d} g(u(t\,,x))\,\d x
		-\E[g(u(t\,,0))] \right)
		\xrightarrow{\text{\rm d}\,} X
		\qquad\text{as $N\to\infty$},\tag{CLT}\label{CLT}
	\end{equation*}
	where $X=X(t\,,g)$ has a centered normal distribution,
	and the symbol $\xrightarrow{\text{\rm d}\,}$ refers to
	convergence in distribution.
	Moreover, \eqref{CLT} is equivalent to the condition $f(\R^d)<\infty$ when $\sigma$
	is a constant.
\end{theorem}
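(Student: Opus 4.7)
\emph{Martingale representation.} My plan is to establish (CLT) via a Clark--Ocone martingale CLT argument and handle the converse (when $\sigma$ is constant) by a direct Gaussian computation. Set
\[
 F_N := N^{-d/2}\int_{[0,N]^d}\{g(u(t,x))-\E[g(u(t,x))]\}\,\d x.
\]
Since $u(t,x)\in\mathbb D^{1,2}$ and $g$ is Lipschitz, the chain rule (after a routine smooth approximation of $g$) gives $g(u(t,x))\in\mathbb D^{1,2}$, so the Clark--Ocone formula represents $F_N$ as the terminal value $M_N(t)$ of the continuous $L^2$-martingale
\[
 M_N(s) := \int_0^s\!\!\int_{\R^d}\Phi_N(r,y)\,W(\d r\,\d y),\quad
 \Phi_N(r,y) := N^{-d/2}\,\E\!\left[\int_{[0,N]^d}D_{r,y}g(u(t,x))\,\d x\,\Big|\,\mathcal F_r\right],
\]
whose predictable quadratic variation is
\[
 \<M_N\>_s = \int_0^s\!\d r\int_{\R^d}f(\d z)\int_{\R^d}\Phi_N(r,y)\Phi_N(r,y-z)\,\d y.
\]

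\emph{Convergence of the quadratic variation.} The chain rule bound $|D_{r,y}g(u(t,x))|\le\lip(g)\,|D_{r,y}u(t,x)|$ together with Dalang's condition and the finiteness of moments of $u$ yields $\sup_{r\le t,\,x\in\R^d}\E[\|D u(t,x)\|_{\HH}^2]<\infty$. Combining this with the spatial stationarity of $(u(t,\cdot),Du(t,\cdot))$ and the change of variable $z=x-x'$ in the double $x$-integral defining $\E[\<M_N\>_s]$, a Riemann-sum/dominated-convergence argument (dominated thanks to $f(\R^d)<\infty$) exhibits a deterministic limit $\sigma^2(s,t,g):=\lim_{N\to\infty}\E[\<M_N\>_s]$. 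A second application of the Gaussian Poincar\'e inequality to $\<M_N\>_s$ itself, viewed as a functional of the noise $W$, bounds its variance by a quantity vanishing as $N\to\infty$, which upgrades this to $\<M_N\>_s\to\sigma^2(s,t,g)$ in probability for every $s\in[0,t]$.

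\emph{Tightness and L\'evy's characterization.} Burkholder's inequality combined with higher-order Poincar\'e / Meyer inequalities applied to $\Phi_N$ yields the uniform estimate $\E|M_N(s)-M_N(r)|^{2k}\le C_k|s-r|^k$ for some $k\ge 2$, so $\{M_N\}_N$ is tight in $C([0,t])$. Every subsequential weak limit $M_\infty$ is a continuous $L^2$-martingale with deterministic quadratic variation $\sigma^2(\cdot,t,g)$; the white-in-time structure of $\eta$ will make the increments of $M_\infty$ over disjoint intervals independent, so $M_\infty$ is a continuous mean-zero L\'evy process, hence by Paul L\'evy's characterization a (time-changed) Brownian motion. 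Specialising to $s=t$ produces $F_N=M_N(t)\xrightarrow{\text{d}}X\sim\mathcal N(0,\sigma^2(t,t,g))$. The main technical obstacle will be the convergence step: exhibiting a genuine limit (rather than a bounded $\limsup$) of the doubly integrated covariance of $Du(t,\cdot)$ against the measure $f(\d z)$ requires combining stationarity, the $L^2$ bound on $Du$, and the finiteness $f(\R^d)<\infty$ to produce an integrable dominating kernel, and it is here that the hypothesis is used in its full strength.

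\emph{Converse when $\sigma$ is constant.} If $\sigma\equiv c\ne 0$, then $u(t,\cdot)$ is a stationary Gaussian field and $F_N$ is itself Gaussian for every $N$, with
\[
 \Var(F_N)=\frac1{N^d}\int_{[0,N]^{2d}}\Cov(u(t,x),u(t,x'))\,\d x\,\d x'.
\]
A change of variable $z=x-x'$ and the explicit Gaussian covariance (computable from the heat kernel and $f$) show that $\Var(F_N)$ is bounded in $N$ if and only if $\int_{\R^d}\Cov(u(t,0),u(t,z))\,\d z<\infty$, which holds iff $f(\R^d)<\infty$; otherwise $\Var(F_N)\to\infty$, precluding any non-degenerate Gaussian CLT with normalisation $N^{d/2}$. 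This completes the stated equivalence.
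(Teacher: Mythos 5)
Your proposal is a genuinely different route from the paper's, closer in spirit to the martingale/quadratic-variation CLT used by Huang--Nualart--Viitasaari for $g(u)=u$: you parameterize the martingale by the \emph{time} variable $s\in[0,t]$ of the Clark--Ocone representation and reduce to a stability statement for $\<M_N\>_s$. The paper instead parameterizes by a \emph{spatial} scale $r$, studying the family $X_N(r)=N^{d/2}\mathcal S_{N,t}(\bm 1_{Q(r)},g)$, and gets independence of increments not from the white-in-time noise but from the asymptotic independence (Theorem~\ref{th:AI}, Corollary~\ref{co:AI}) of the occupation field over spatially disjoint regions. This distinction is not cosmetic: the paper's mechanism requires only first Malliavin derivatives of $g(u)$.

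The genuine gap in your proposal is the step ``$\Var[\<M_N\>_s]\to 0$ by a second application of the Gaussian Poincar\'e inequality.'' Writing out $\Phi_N(r,y)=N^{-d/2}\E[\,\int_{[0,N]^d}g'(u(t,x))D_{r,y}u(t,x)\,\d x\mid\mathcal F_r\,]$, the Malliavin derivative of $\<M_N\>_s$ requires $D_{r',y'}\Phi_N(r,y)$, hence $D_{r',y'}\big(g'(u(t,x))D_{r,y}u(t,x)\big)=g''(u(t,x))D_{r',y'}u(t,x)D_{r,y}u(t,x)+g'(u(t,x))D_{r',y'}D_{r,y}u(t,x)$. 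This forces you into the class $g\in C^2$ (or at least $g'\in\lip$), strictly smaller than $\lip$. You cannot repair this by mollification: $\lip(g-g*\phi_\varepsilon)$ does \emph{not} tend to $0$ when $g'$ is discontinuous (e.g.\ $g(u)=|u|$), so the uniform $L^k$-stability in $\lip(g)$ provided by Theorem~\ref{th:S} does not transport a CLT for smoothed $g$ to one for merely Lipschitz $g$. An analogous obstruction appears if you try to avoid Poincar\'e and bound $\Var[\<M_N\>_s]$ by a fourth-moment/chaos computation: one still needs differentiability of $g'$ to control the resulting kernels. The paper's route is specifically designed to circumvent this, which is why it works for all $g\in\lip$; your argument, as written, proves the theorem only for $g$ with Lipschitz derivative.

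A secondary, smaller point: tightness in $C([0,t])$ is unnecessary for the one-dimensional conclusion; once $\<M_N\>_t\to\sigma^2$ in probability, a standard martingale CLT (e.g.\ Jacod--Shiryaev) yields $M_N(t)\xrightarrow{\text{d}}\mathcal N(0,\sigma^2)$ directly. Your converse argument for constant $\sigma$ matches the paper's almost verbatim and is fine.
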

Although it is not so easy to prove Theorem \ref{th:main} directly, it turns out to be
possible to give a relatively simple proof of a
much more general result (Theorem \ref{th:1.1}).
In order to describe the more general result we need to abstract the problem
to a suitable level, and that requires some work which we relegate to the next section.
Moreover, we remark that the central limit theorem may still hold with $f(\R^d)=\infty$  with different scaling, for instance, $f(\d x)=\|x\|^{-\beta}\d x$ as considered in  \cite{HuangNualartViitasaariZheng2019}.

Let us conclude the Introduction by setting forth some notation that will be used
throughout. Throughout, let
$\mathcal{F}=\{\mathcal{F}_t\}_{t\ge0}$ denote the Brownian filtration
generated by the infinite-dimensional Brownian motion $\{W_t\}_{t\ge0}$ of \eqref{W},
and  assume that $\mathcal{F}$ is augmented in the usual way.
We write ``$g_1(x)\lesssim g_2(x)$ for all $x\in X$'' when there exists a real number
$L$ such that $g_1(x)\le Lg_2(x)$ for all $x\in X$.
Alternatively, we might write
``$g_2(x)\gtrsim g_1(x)$
for all $x\in X$.'' By ``$g_1(x)\asymp g_2(x)$ for all $x\in X$'' we mean that
$g_1(x)\lesssim g_2(x)$ for all $x\in X$ and $g_2(x)\lesssim g_1(x)$ for all $x\in X$.
Finally,
``$g_1(x)\propto g_2(x)$ for all $x\in X$'' means that there exists a real number $L$
such that $g_1(x)=L g_2(x)$ for all $x\in X$.
For every $Z\in L^k(\Omega)$, we write $\|Z\|_k$ instead of the more cumbersome
$\|Z\|_{L^k(\Omega)}$. Set
\[
	\lip(g) := \sup_{a,b\in\R}
	\frac{|g(b)-g(a)|}{|b-a|},
\]
where $0\div 0 :=0$. Thus, $g\in\lip$ if and only if $\lip(g)<\infty$.

\section{Main results}

Before we describe the main results of this paper we introduce
the occupation fields of the processes  $u(t)$, for every $t>0$, where we recall
$u$ denotes the solution to the SPDE \eqref{SHE}.

\subsection{The occupation field}
Choose and fix some $t\ge0$, and consider the collection of all random variables of the
form
\begin{equation}\label{S}
	\mathcal{S}_{N,t}(\psi\,,g) := \int_{\R^d} g(u(t\,,x))\psi_N(x)\,\d x -
	\E [ g(u(t\,,0)) ]\int_{\R^d}\psi(x)\,\d x,
\end{equation}
as $N>0$ ranges over all positive reals, $g$ ranges over all Lipschitz functions, and
\begin{equation}\label{psi_N}
	\psi_N(x) := N^{-d}\psi(x/N)\qquad\text{for all $x\in\R^d$ and $N>0$},
\end{equation}
for a sufficiently-large family of ``nice'' functions $\psi:\R^d\to\R$.\footnote{%
	We will introduce many other functions with many other
	subscripts. The subscript ``$N$'' is however reserved for the notation in \eqref{psi_N}.
}
The left-hand side of \eqref{CLT} is equal to
$N^{d/2} \mathcal{S}_{N,t}(\bm{1}_{[0,1]^d},g)$,
but it turns out to be easier to study the CLT for $\mathcal{S}_{N,t}(\psi\,,g)$
for more general functions $\psi$ than just $\psi=\bm{1}_{[0,1]^d}$.

As was mentioned in the Introduction, Dalang \cite{Dalang1999}
has proved that condition \eqref{Dalang} (which is enforced throughout this paper)
implies among other things that $u$
is continuous in $L^k(\Omega)$ for every $k\ge2$.
This means that
\[
	\lim_{(s,y)\to(t,x)}
	\left\| u(s\,,y) - u(t\,,x)\right\|_k=0\qquad\text{for all $k\ge2$, $t\ge0$, and $x\in\R^d$}.
\]
A small extension of Doob's separability theory \cite{Doob} then implies
that there exists a version of $u$, which we continue to denote by $u$,
such that $\R_+\times\R^d\times\Omega\ni
(t\,,x\,,\omega)\mapsto u(t\,,x)(\omega)$
is measurable. Therefore, \eqref{S} and Fubini's theorem yield a well-defined
stochastic process provided that
\[
	\int_{\R^d}\E\left(|g(u(t\,,x))|\right) |\psi_N(x)|\,\d x<\infty\quad\text{
	for all $t,N>0$ and $g\in\lip$.}
\]
Since $u(t)$ is stationary, the preceding integral simplifies to
\[
	\E\left(|g(u(t\,,0))|\right) \|\psi\|_{L^1(\R^d)}
	\le\left(  |g(0)|+\lip(g)\E(|u(t\,,0)|)\right) \|\psi\|_{L^1(\R^d)},
\]
which is finite locally uniformly in $t\ge0$ provided that $\psi\in L^1(\R^d)$.
In this way we see that the random field
\[
	\left\{ \mathcal{S}_{N,t}(\psi\,,g)\,;N>0,\,\psi\in L^1(\R^d)\,,g\in\lip\right\}
\]
is well defined for every $t\ge0$.

The following is one of the main technical innovations of this paper.
Before we state this result, note that because $f(\R^d)>0$ the function
$\Upsilon$ defined in \eqref{Dalang} is strictly decreasing on $(0\,,\infty)$.
Therefore, it has an inverse which we denote by
\begin{equation}\label{Lambda:Upsilon}
	\Lambda := \Upsilon^{-1}.
\end{equation}

\begin{theorem}\label{th:S}
	For all real numbers $N,T>0$, $\varepsilon\in(0\,,1)$, and $k\ge2$,
	and for every pair of non-random functions
	$\psi\in L^1(\R^d)\cap L^2(\R^d)$ and $g\in\lip$,
	\begin{equation}\label{eq:S}
		\sup_{t\in(0,T)}\left\| \mathcal{S}_{N,t}(\psi\,,g) \right\|_k
		\le \frac{\mathsf{A}(\varepsilon)\sqrt{T k}}{N^{d/2}}
		\exp\left\{ 2T\Lambda\left(
		\frac{\mathsf{a}(\varepsilon)}{k}\right)\right\}
		\, \lip(g)\|\psi\|_{L^2(\R^d)},
	\end{equation}
	where
	\begin{equation}\label{eq:Aa}
		\mathsf{A}(\varepsilon) :=
		\frac{{ 16}[|\sigma(0)|\vee\lip(\sigma)]\sqrt{f(\R^d)}}{ {\varepsilon^{3/2}}},
		\qquad
		\mathsf{a}(\varepsilon)
		:= \frac{(1-\varepsilon)^2}{2^{(d+6)/2}[|\sigma(0)|\vee\lip(\sigma)]^2}.
	\end{equation}
\end{theorem}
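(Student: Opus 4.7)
The plan is to combine three ingredients: a Malliavin $L^k$-Poincar\'e inequality; a pointwise moment bound on the Malliavin derivative $D_{r,y}u(t,x)$; and the hypothesis $f(\R^d)<\infty$, which converts the noise's Cameron--Martin inner product into a plain $L^2$ inner product up to the factor $\sqrt{f(\R^d)}$. Since $g\in\lip$ is a.e.\ differentiable (Rademacher) and $u(t,x)\in\mathbb{D}^{1,k}$, the Malliavin chain rule gives
$$D_{r,y}\mathcal{S}_{N,t}(\psi,g)=\int_{\R^d}g'(u(t,x))\,D_{r,y}u(t,x)\,\psi_N(x)\,\d x.$$

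I would apply the $L^k$-Poincar\'e inequality on Wiener space, $\|F-\E F\|_k\le\sqrt{k-1}\,\|\|DF\|_\HH\|_k$ (valid for $k\ge2$), to $F=\mathcal{S}_{N,t}(\psi,g)$. Since $f$ is finite, $\hat f$ is bounded by $f(\R^d)$, so Plancherel gives $\|\phi\|_\HH^2\le f(\R^d)\,\|\phi\|_{L^2(\R_+\times\R^d)}^2$ for every deterministic $\phi\in L^2(\R_+\times\R^d)$; combined with Minkowski's integral inequality in $L^{k/2}(\Omega)$ applied to $\|D\mathcal{S}_{N,t}(\psi,g)\|_\HH^2$, this yields
$$\|\mathcal{S}_{N,t}(\psi,g)\|_k^2\le(k-1)\,f(\R^d)\int_0^t\int_{\R^d}\|D_{r,y}\mathcal{S}_{N,t}(\psi,g)\|_k^2\,\d y\,\d r.$$

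The main analytic step, and the one I expect to be the hardest, is the pointwise moment bound
$$\|D_{r,y}u(t,x)\|_k\le\frac{16\,[|\sigma(0)|\vee\lip(\sigma)]}{\varepsilon^{3/2}}\,G_{t-r}(x-y)\exp\!\left\{2T\Lambda\!\left(\frac{\mathsf{a}(\varepsilon)}{k}\right)\right\},$$
where $G$ denotes the Gaussian heat kernel. One starts from the integral equation
$$D_{r,y}u(t,x)=G_{t-r}(x-y)\,\sigma(u(r,y))+\int_r^t\!\int_{\R^d}G_{t-s}(x-z)\,\Sigma(s,z)\,D_{r,y}u(s,z)\,W(\d s\,\d z),$$
with $|\Sigma|\le\lip(\sigma)$, applies BDG, and iterates Picard-style; Dalang's condition enters through the renewal-type function $\Upsilon$, and the free parameter $\varepsilon$ arises from a Young-type inequality that trades the prefactor $\varepsilon^{-3/2}$ against the argument $\mathsf{a}(\varepsilon)/k$ of $\Lambda$. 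Once this estimate is in hand, plugging it into the chain-rule formula, using $|g'|\le\lip(g)$, applying Young's convolution inequality $\||\psi_N|*G_{t-r}\|_{L^2(\R^d)}\le\|G_{t-r}\|_{L^1(\R^d)}\|\psi_N\|_{L^2(\R^d)}=N^{-d/2}\|\psi\|_{L^2(\R^d)}$, integrating $r$ over $(0,t)\subset(0,T)$ (which produces the factor $\sqrt T$), and taking square roots reproduces the right-hand side of \eqref{eq:S}.
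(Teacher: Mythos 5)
Your proposal is correct and follows essentially the same route as the paper: a Poincar\'e-type inequality for the noise, the key moment bound $\left\| D_{r,y}u(t\,,x)\right\|_k\lesssim \varepsilon^{-3/2}\e^{2T\Lambda(\mathsf{a}(\varepsilon)/k)}\bm{p}_{t-r}(x-y)$ (the paper's Lemma \ref{lem:Du}, proved by exactly the Picard/BDG iteration you describe), and Young's convolution inequality together with $f(\R^d)<\infty$. The only cosmetic difference is that the paper obtains its Poincar\'e inequality from the Clark--Ocone formula plus the Burkholder--Davis--Gundy bound \eqref{BDG}, keeping the convolution structure and using $|\psi_N|*|\tilde\psi_N|\le \|\psi_N\|_{L^2(\R^d)}^2$ with $\int_{\R^d}(\bm{p}_{2(t-s)}*f)(x)\,\d x=f(\R^d)$, whereas you invoke the abstract $L^k$-Poincar\'e inequality and dominate the Cameron--Martin norm via $\hat f\le f(\R^d)$; the two devices are interchangeable here and yield the same constants.
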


The proof of Theorem \ref{th:S} hinges on careful analysis of a Poincar\'e inequality for
the infinite-dimensional Brownian motion $W$ defined in \eqref{W}, and the statement of
Theorem \ref{th:S} has a number of consequences for the present work.
We mention one of them next.

For every $\psi\in L^2(\R^d)$ we can find $\psi^1,\psi^2,\ldots\in L^1(\R^d)\cap L^2(\R^d)$
such that $\psi^n\to\psi$ in $L^2(\R^d)$ as $n\to\infty$. Because
every $\mathcal{S}_{N,t}$ is a random linear functional on $L^1(\R^d)\times\lip$, it follows readily
from \eqref{eq:S} that $\{\mathcal{S}_{N,t}(\psi^n\!,g)\}_{n=1}^\infty$ is a Cauchy sequence
in $L^k(\Omega)$ for every $k\ge2$. Consequently,
\[
	\mathcal{S}_{N,t}(\psi\,,g) := \lim_{n\to\infty} \mathcal{S}_{N,t}(\psi^n\!,g)
	\quad\text{exists in $L^k(\Omega)$ for every $k\ge2$}.
\]
The construction of $\mathcal{S}_{N,t}(\psi\,,g)$
does not depend on the particular sequence $\{\psi^n\}_{n=1}^\infty$,
and $\mathcal{S}_{N,t}(\psi\,,g)$ continues to satisfy \eqref{eq:S}. Moreover, every
$\mathcal{S}_{N,t}$ is a random linear functional on $L^2(\R^d)\times\lip$.

\begin{definition}\label{def:S}
	Fix some $t\ge0$.
	By the \emph{occupation}, or \emph{sojourn}, \emph{field} of $u(t)$ we mean the
	above-defined
	random field $\mathcal{S}[t]:=
	\{\mathcal{S}_{N,t}(\psi\,,g);\, N>0,\, \psi\in L^2(\R^d),\, g\in\lip\}$.
\end{definition}

Definition \ref{def:S} has non-trivial content
since $\mathcal{S}_{N,t}(\psi\,,g)$ cannot be defined pathwise
when $\psi\in L^2(\R^d)$. Nor can we claim that $\mathcal{S}_{N,t}(\psi\,,g)$
satisfies \eqref{S}
when $\psi\in L^2(\R^d)$. The situation is somewhat akin to what happens in the
construction of the Fourier transform on
$\R^d$. In that setting, $\hat{\phi}(z)$ is simply equal to the Lebesgue integral
$\int_{\R^d}\exp(ix\cdot z)\phi(x)\,\d x$
when $\phi\in L^1(\R^d)$, but not when $\phi\in L^2(\R^d)\setminus L^1(\R^d)$.
That is, unless we interpret the integral $\int_{\R^d}\exp(ix\cdot z)\phi(x)\,\d x$
suitably in order to remove all singularities that arise when $\phi\in L^2(\R^d)\setminus
L^1(\R^d)$. Thus, we can see that Theorem \ref{th:S} is ``removing the singularities''
that arise when we transition from $\psi\in L^1(\R^d)$ to $\psi\in L^2(\R^d)$.

\subsection{Functional CLTs}

Now that the occupation fields $\{\mathcal{S}[t]\}_{t\ge0}$
has been properly constructed we can describe
the main two results of this paper. These are two functional CLTs, the first of
which is the following.

\begin{theorem}\label{th:1.1}
	Choose and fix $t\ge0$ and $g\in\lip$. Also, let $\mathscr{F}\subset L^2(\R^d)$
	be a compact set such that
	$\int_0^1 [ \bm{N}_{\mathscr{F}\!,L^2(\R^d)}(r)]^\varepsilon\,\d r<\infty$
	for some $\varepsilon>0$,
	where $\bm{N}_{\mathscr{F}\!,L^2(\R^d)}$ denotes the metric entropy of
	$\mathscr{F}$ in the metric defined by the norm of $L^2(\R^d)$ [\S\ref{subsec:ME}].
	Then, we have the functional CLT,
	\[
		\left\{ N^{d/2} \mathcal{S}_{N,t}(\psi\,,g)\,; \psi\in\mathscr{F}\right\}
		\xrightarrow{C(L^2(\R^d))}
		\left\{ \Gamma_t(\psi\,,g);\, \psi\in\mathscr{F}\right\}\qquad
		\text{as $N\to\infty$},
	\]
	where $\Gamma_t=\{\Gamma_t(\psi\,,g);\,\psi\in L^2(\R^d),g\in\lip\}$
	is a centered Gaussian random field whose covariance function is
	\begin{equation}\label{Cov:Gamma}
		\Cov\left[ \Gamma_t(\psi\,,g) \,, \Gamma_t(\Psi\,,G)\right]=
		\mathbf{B}_t(g\,,G)\cdot \<\psi\,,\Psi\>_{L^2(\R^d)},
	\end{equation}
	for every $\psi,\Psi\in L^2(\R^d)$ and $g,G\in\lip$.
	The bilinear form $\mathbf{B}_t:\lip\times\lip\to\R$ is non-negative definite
	and defined in \eqref{def:B_t} below.
\end{theorem}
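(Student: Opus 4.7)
The plan is to split the proof into finite-dimensional convergence on $\mathscr{F}$ and tightness in $C(\mathscr{F})$, the former via a continuous-martingale CLT driven by Clark-Ocone and Malliavin calculus, the latter driven entirely by Theorem \ref{th:S} together with the metric entropy hypothesis.

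\textbf{Finite-dimensional convergence.} For each $\psi \in L^2(\R^d)$, the first step is to apply the Clark-Ocone formula to represent
\[
	X^N(\psi) := N^{d/2}\mathcal{S}_{N,t}(\psi,g) = \int_0^t\!\!\int_{\R^d} \E\!\left[ D_{s,y} X^N(\psi) \,\big|\, \mathcal{F}_s\right] W(\d s,\d y),
\]
so that $M_s^N(\psi) := \E[X^N(\psi) \mid \mathcal{F}_s]$ is a continuous $L^2$-martingale on $[0,t]$ with $M_t^N(\psi) = X^N(\psi)$. Using the chain rule (extended to Lipschitz $g$ in the standard way) together with the mild formulation of $u$ --- which yields an expression for $D_{s,y} u(t,x)$ in terms of the heat kernel, $\sigma(u(s,y))$, and a linear SPDE remainder --- one obtains an explicit expression for the joint quadratic variation
\[
	\langle M^N(\psi), M^N(\Psi) \rangle_t = \int_0^t\!\!\int_{\R^{2d}} \E[D_{s,y} X^N(\psi)\mid\mathcal{F}_s]\,\E[D_{s,y'} X^N(\Psi)\mid\mathcal{F}_s]\, f(y - y')\, \d y\, \d y'\, \d s.
\]
Exploiting the spatial stationarity of each slice $u(s)$, the scaling of $\psi_N$, and the finiteness $f(\R^d) < \infty$, I expect this bracket to converge in $L^1(\Omega)$ as $N\to\infty$ to the deterministic quantity $\mathbf{B}_t(g,G) \langle \psi, \Psi\rangle_{L^2(\R^d)}$ for the bilinear form $\mathbf{B}_t$ defined later in the paper. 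Paul Lévy's characterization of Brownian motion (equivalently, the continuous martingale CLT) then promotes each $M^N_t(\psi)$ to a Gaussian limit with the correct variance, and the Cramér-Wold device applied to linear combinations $\sum c_i \psi_i$ delivers joint finite-dimensional convergence to a centered Gaussian field with covariance as in \eqref{Cov:Gamma}. Non-negative definiteness of $\mathbf{B}_t$ comes for free since it arises as a limit of covariances.

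\textbf{Tightness.} By linearity of $\psi \mapsto \mathcal{S}_{N,t}(\psi,g)$ and Theorem \ref{th:S}, for every $k\ge 2$,
\[
	\| X^N(\psi) - X^N(\psi') \|_k = N^{d/2} \| \mathcal{S}_{N,t}(\psi - \psi',\, g) \|_k \le C(t,k,g,\sigma,f)\, \|\psi - \psi'\|_{L^2(\R^d)}
\]
uniformly in $N$, so $X^N$ is random-Lipschitz in $L^k(\Omega)$ on $(\mathscr{F},\|\cdot\|_{L^2})$ with constant independent of $N$. Combined with the entropy assumption $\int_0^1 [\mathbf{N}_{\mathscr{F}, L^2(\R^d)}(r)]^\varepsilon\, \d r < \infty$, a Dudley-type chaining argument (taking $k$ large in terms of $\varepsilon$ so that the moment bound dominates the entropy) produces a uniform modulus-of-continuity estimate
\[
	\lim_{\delta \downarrow 0}\, \limsup_{N\to\infty}\, \P\!\left( \sup_{\substack{\psi,\psi' \in \mathscr{F}\\ \|\psi - \psi'\|_{L^2} \le \delta}} |X^N(\psi) - X^N(\psi')| > \eta \right) = 0 \qquad \text{for every } \eta>0,
\]
which together with pointwise tightness from the first step gives tightness in $C(\mathscr{F})$ via Arzelà-Ascoli.

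Combining the two steps yields convergence in law in $C(\mathscr{F})$ to the unique Gaussian field identified in the finite-dimensional analysis, completing the argument. The main obstacle is the $L^1$-convergence of the quadratic variation in the first step: the Malliavin derivative $D_{s,y} u(t,x)$ itself solves a linear SPDE driven by $\eta$, and evaluating the double spatial integral against $f(y-y')\psi_N(x)\psi_N(x')$ in the limit $N\to\infty$ requires careful use of stationarity and a homogenization-type argument that essentially trades off the fast spatial oscillation of $\psi_N$ against the short-range correlations encoded in the finite measure $f$.
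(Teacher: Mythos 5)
Your tightness step is essentially the paper's: Theorem~\ref{th:S} gives the uniform-in-$N$ Lipschitz bound $\|X^N(\psi)-X^N(\psi')\|_k\lesssim\|\psi-\psi'\|_{L^2(\R^d)}$, and a chaining argument against the entropy integral (Theorem~\ref{th:ME} in the paper, Dudley chaining in your sketch) produces the modulus-of-continuity estimate. That half is sound.

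The finite-dimensional step, however, has a genuine gap and in fact follows a different route than the paper. You propose to run the continuous-martingale CLT by showing that $\langle M^N(\psi),M^N(\Psi)\rangle_t\to\mathbf{B}_t(g,G)\,\langle\psi,\Psi\rangle_{L^2(\R^d)}$ in $L^1(\Omega)$; you then flag that convergence as ``the main obstacle,'' to be handled by a ``homogenization-type argument.'' As written this is an assertion, not a proof, and it is precisely the step the paper deliberately avoids. The bracket is a double integral of conditional expectations of products of Malliavin derivatives, and for nonlinear Lipschitz $g$ establishing its $L^1$-convergence is nontrivial; it is essentially the computation the Malliavin--Stein approach of Huang et al.\ requires, and the paper announces up front that it takes a different path.

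What the paper actually does: after reducing to one-dimensional convergence by Cram\'er--Wold (Lemma~\ref{lem:fdd}) and to a dense class of step functions (Lemma~\ref{lem:dense}), it shows (Theorem~\ref{th:AI}, Corollary~\ref{co:AI}) that $N^{d/2}\mathcal{S}_{N,t}(\psi_j,g)$ for $\psi_j$ with disjoint compact supports are \emph{asymptotically independent}. It then considers the one-parameter family $r\mapsto X_N(r)=N^{d/2}\mathcal{S}_{N,t}(\bm 1_{Q(r)},g)$ over a nested family of rectangles $Q(r)$, shows $C[0,1]$-tightness from Theorem~\ref{th:S}, and proves that every subsequential limit has \emph{stationary independent increments}, is continuous, and has mean zero. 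L\'evy's characterization is then invoked in its \emph{L\'evy-process} form --- the only continuous centered L\'evy process is Brownian motion --- not the martingale CLT, and the variance is pinned down by Proposition~\ref{pr:Cov:asymp} (a covariance computation, not a quadratic-variation computation). This asymptotic-independence route is what lets the paper sidestep the quadratic-variation convergence that your sketch leaves open. To make your proposal rigorous you would have to actually prove the $L^1$-convergence of the bracket, which amounts to redoing the Malliavin--Stein analysis of \cite{HuangNualartViitasaari2018,HuangNualartViitasaariZheng2019} for nonlinear $g$; that is a different (and harder) project than the present paper's argument.
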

Examples \ref{ex:BS:1} and \ref{ex:BS:2} can be combined to produced a
number of compact sets $\mathscr{F}\subset L^2(\R^d)$ to which Theorem
\ref{th:1.1} applies. For now, let us mention the following (see Example \ref{ex:BS:1}),
which immediately  implies \eqref{CLT}, the main part of Proposition \ref{pr:1}. The remainder
of Proposition \ref{pr:1} is not hard to prove; the details can be found
in \S\ref{subsec:T1:2} below.

Define
\[
	[0\,,z] := [0\,,z_1]\times\cdots\times[0\,,z_d]
	\qquad\text{for all $z\in\R^d_+$}.
\]
Then, for every fixed $t,m\ge0$ and $g\in\lip$, the random field
$W_{N,t}:=\{W_{N,t}(y);\, y\in[0\,,m]^d\}$, defined by
\begin{align}\label{W_{N,t}}
	W_{N,t}(y) := N^{d/2}\left(
	\frac{1}{N^d}\int_{[0,Ny]} g(u(t\,,x))\,\d x - \E[g(u(t\,,0))]\prod_{j=1}^d y_j
	\right),
\end{align}
converges weakly in $C([0\,,m]^d)$ to $\{\sqrt{\mathbf{B}_t(g\,,g)}W(y)\,; y\in[0\,,m]^d\}$
as $N\to\infty$, where $W$ denotes a $d$-parameter, standard Brownian sheet
indexed by $[0\,,m]^d$ (see Walsh \cite{Walsh}).

The proof of Theorem \ref{th:1.1} produces at
no extra cost a second functional CLT that we describe next.
We can view the space $\lip$ as a separable metric space, once it is endowed with
the metric defined by the norm,
\begin{align}\label{E:NormLip}
	\|g\|_{\lip} := |g(0)| + \lip(g)
	\qquad\text{for all $g\in\lip$}.
\end{align}

With this in mind, we have the following.

\begin{theorem}\label{th:1.2}
	Choose and fix $t\ge0$ and $\psi\in L^2(\R^d)$. Also, let $\mathscr{G}\subset \lip$
	be a separable and compact set such that
	$\int_0^1 [ \bm{N}_{\mathscr{G}\!,\lip}(r)]^\varepsilon\,\d r<\infty$
	for some $\varepsilon>0$,
	where $\bm{N}_{\mathscr{G}\!,\lip}$ denotes the metric entropy of
	$\mathscr{G}$ in the metric defined by the norm of $\lip$.
	Then, we have the functional CLT,
	\[
		\left\{ N^{d/2} \mathcal{S}_{N,t}(\psi\,,g)\,; g\in\mathscr{G}\right\}
		\xrightarrow{C(\lip)}
		\left\{ \Gamma_t(\psi\,,g);\, g\in\mathscr{G}\right\}\qquad
		\text{as $N\to\infty$},
	\]
	for the same Gaussian random field $\Gamma_t$ that appeared in Theorem \ref{th:1.1}.
\end{theorem}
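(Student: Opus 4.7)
The proof follows the same two-step blueprint as Theorem \ref{th:1.1} (finite-dimensional convergence plus tightness in $C(\mathscr{G})$), and exploits a crucial symmetry in Theorem \ref{th:S}: the bound \eqref{eq:S} is linear in each of $\psi$ and $g$ separately. Since any finite linear combination of Lipschitz functions is Lipschitz, the map $g\mapsto \mathcal{S}_{N,t}(\psi,g)$ is genuinely linear on $\lip$, and the candidate limit field $g\mapsto \Gamma_t(\psi,g)$ inherits this linearity from the bilinear form in \eqref{Cov:Gamma}. The entire apparatus used to prove Theorem \ref{th:1.1} therefore transfers with the roles of $\psi$ and $g$ interchanged; I sketch the adaptation below.

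For convergence of the finite-dimensional distributions, fix $g_1,\ldots,g_m\in\mathscr{G}$ and $\alpha_1,\ldots,\alpha_m\in\R$; linearity of $\mathcal{S}_{N,t}(\psi,\cdot)$ gives
\[
\sum_{i=1}^m \alpha_i\, N^{d/2}\mathcal{S}_{N,t}(\psi,g_i) \;=\; N^{d/2}\mathcal{S}_{N,t}\!\left(\psi\,,\, \sum_{i=1}^m\alpha_i g_i\right),
\]
and the right-hand side converges in distribution to $\Gamma_t\!\left(\psi,\sum_i\alpha_i g_i\right)=\sum_i\alpha_i\,\Gamma_t(\psi,g_i)$ by applying Theorem \ref{th:1.1} to the singleton $\mathscr{F}=\{\psi\}$ with the Lipschitz function $\sum_i\alpha_i g_i\in\lip$. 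The Cram\'er--Wold device then delivers convergence of every finite-dimensional marginal to the corresponding marginal of $\Gamma_t(\psi,\cdot)\big|_{\mathscr{G}}$.

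For tightness, apply Theorem \ref{th:S} to the difference $g-G\in\lip$: for every $k\ge 2$ and every $t\in(0\,,T)$,
\[
\bigl\| N^{d/2}\mathcal{S}_{N,t}(\psi,g)-N^{d/2}\mathcal{S}_{N,t}(\psi,G)\bigr\|_k \;=\; N^{d/2}\bigl\|\mathcal{S}_{N,t}(\psi,g-G)\bigr\|_k \;\lesssim\; \lip(g-G)\,\|\psi\|_{L^2(\R^d)} \;\le\; \|g-G\|_{\lip}\,\|\psi\|_{L^2(\R^d)},
\]
with an implicit constant independent of $N$. This is a uniform Lipschitz-in-$L^k$ estimate for the rescaled field $g\mapsto N^{d/2}\mathcal{S}_{N,t}(\psi,g)$ on the metric space $(\mathscr{G},\|\cdot\|_{\lip})$. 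Combined with the metric entropy hypothesis $\int_0^1[\bm{N}_{\mathscr{G},\lip}(r)]^\varepsilon\,\d r<\infty$ and the separability of $\mathscr{G}$, the same Dudley-type chaining argument already used to prove Theorem \ref{th:1.1} --- with $k$ chosen large enough that $1/k<\varepsilon$ --- yields equicontinuity in probability and hence tightness in $C(\mathscr{G})$. The only real obstacle is verifying that this chaining argument goes through under the given entropy integral, and this is precisely where Theorem \ref{th:S} pays off: it is deliberately tailored so that $N^{d/2}\mathcal{S}_{N,t}(\psi,g-G)$ admits the displayed $N$-free Lipschitz bound that the chaining machinery requires.
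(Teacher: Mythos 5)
Your proposal is correct and follows essentially the same route as the paper: finite-dimensional convergence via linearity in $g$, the Cram\'er--Wold device, and the one-dimensional CLT (Lemma \ref{lem:2}/Proposition \ref{pr:1}), followed by tightness from the uniform-in-$N$ bound of Theorem \ref{th:S} fed into the metric-entropy chaining of Theorem \ref{th:ME}. The only cosmetic difference is that you obtain the polynomial tail bound by fixing one large moment exponent $k$, whereas the paper optimizes $k$ as a function of the threshold (Lemma \ref{lem:tightness}); both suffice for the entropy condition.
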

Examples \ref{ex:Lip:1} and \ref{ex:Lip:2} can be combined to
create examples of compact sets $\mathscr{G}$ to which Theorem \ref{th:1.2}
applies.

Finally let us conclude this section with a closing remark.

\begin{remark}
         \begin{itemize}
         \item [(1)]
	It is easy to see from \eqref{Cov:Gamma}
	that $\Gamma_t$ is a random
	bilinear mapping for every $t\ge0$; that is, for all $\alpha_1,\ldots,\alpha_m,
	\beta_1,\ldots,\beta_n\in\R$, $\psi^1,\ldots,\psi^m\in
	L^2(\R^d)$, and $g^1,\ldots,g^n\in\lip$,
	\[
		\Gamma_t\left( \alpha_1\psi^1+\cdots+\alpha_m\psi^m,
		\beta_1g^1+\cdots+\beta_n g^n\right)
		=\sum_{i=1}^m\sum_{j=1}^n \alpha_i\beta_j\Gamma_t\left( \psi^i\,,g^j\right)
		\qquad\text{a.s.}
	\]
	To prove this, we simply compute the variance of the difference of the two sides,
	and note that the said variance is zero. The details are elementary,
	and therefore omitted.
	\item [(2)] We point out that as a process in time, a functional CLT is proved in \cite[Theorem 2.3]{CKNP_d} using Malliavin-Stein method provided that $f$ satisfies  \eqref{f:finite} and the reinforced Dalang's condition (see \cite[(1.6)]{CKNP_d}).
	It is also possible to consider the convergence of $N^{d/2} \mathcal{S}_{N,t}(\bf{1}_{[0, x]\times [0, 1]^{d-1}}\,,g)$ as a function of $(t, x)$. We leave it for interested reader.
	
	\end{itemize}
\end{remark}

\section{Preliminaries}

We begin the work by briefly collecting and developing some notation and basic background
information that will be used tacitly throughout the remainder of this paper.

\subsection{Potential theory}
Define, for every $t,\lambda>0$ and $x\in\R^d$,
\begin{equation}\label{p:v}
	\bm{p}_t(x) = \frac{1}{(2\pi t)^{d/2}}\exp\left( - \frac{\|x\|^2}{2t}\right)
	\quad\text{and}\quad
	\bm{v}_\lambda(x) = \int_0^\infty\e^{-\lambda s}\bm{p}_s(x)\,\d s.
\end{equation}
The notation should not be misunderstood with our convention in \eqref{psi_N},
as there are no functions $\bm{p}$ and $\bm{v}$ to which the operation in
\eqref{psi_N} can be applied.

We can write the solution to \eqref{SHE} in mild form as
the solution to the following stochastic integral equation:
\begin{equation}\label{mild}
	u(t\,,x) = 1 + \int_{(0,t)\times\R^d}\bm{p}_{t-s}(x-z)\sigma(u(s\,,z))\,
	\eta(\d s\,\d z);
\end{equation}
see Dalang \cite{Dalang1999} and Walsh \cite{Walsh}.

Since $\bm{p}_s\in\mathscr{S}(\R^d)$ for every $s>0$, we may apply Parseval's identity
to compute $\bm{p}_s*f$ and then integrate $[\exp(-\lambda s)\,\d s]$ in order to see that
for all $\lambda>0$ and $x\in\R^d$,
\begin{equation}\label{v:f}
	(\bm{v}_\lambda*f)(x) = \frac{2}{(2\pi)^d}\int_{\R^d}
	\frac{\e^{ix\cdot z} \hat{f}(z)}{2\lambda+\|z\|^2}\,\d z,
	\quad\text{whence}\quad
	(\bm{v}_\lambda*f)(0)=\Upsilon(\lambda),
\end{equation}
where $\Upsilon$ was defined in \eqref{Dalang}. Moreover, the inverse function $\Lambda$
to $\Upsilon$ --- see \eqref{Lambda:Upsilon} --- can be written in the following
alternative forms.
\[
	\Lambda(a) := \inf\left\{ \lambda>0:\ (\bm{v}_\lambda*f)(0) < a \right\}
	= \inf\left\{ \lambda>0:\ \Upsilon(\lambda)<a\right\}
	\qquad\text{for all $a>0$},
\]
where $\inf\varnothing:=\infty$. Since $\hat{f}(0)=f(\R^d)\in(0\,,\infty)$ and $\hat{f}$
is continuous, it follows from \eqref{v:f} that:
(a) $\Lambda(a)<\infty$ for all $a\neq0$ in all dimensions; and
(b) $\Lambda$ is
continuous and strictly decreasing on $(0\,,\infty)$.

\subsection{A Burkholder--Davis--Gundy inequality}
Suppose $L=\{L(s\,,z)\}_{s\ge0,z\in\R^d}$ is a predictable, space-time
random field. Then, the Walsh integral process $t\mapsto\int_{(0,t)\times\R^d}L\,\d\eta$
is a continuous, $L^2(\Omega)$-martingale with respect to the filtration
$\mathcal{F}$, and satisfies
\begin{equation}\label{BDG}
	\left\| \int_{\R_+\times\R^d} L\,\d\eta\right\|_k^2
	\le  4k\int_0^\infty
	\left( \| L(s\,,\bullet)\|_k * \| \widetilde{L(s\,,\bullet)}\|_k *f \right)(0)\,\d s,
\end{equation}
for every real number $k\ge2$ provided that the right-hand side of the above
inequality is finite at least when $k=2$, where
\[
	\tilde{\phi}(x) := \phi(-x)
\]
defines the (spatial) \emph{reflection} of every function $\phi:\R^d\to\R$.
Eq.\ \eqref{BDG} can be deduced from
the Burkholder--Davis--Gundy (BDG) inequality \cite{BDG}, using the fact that
the optimal constant in the BDG inequality is at most $\sqrt{4k}$ (see Carlen and
Kre\'e \cite{CarlenKree1991}). A derivation of \eqref{BDG} can be found in
Khoshnevisan \cite{KhCBMS} when $f$ is a function; see also \cite{Dalang1999}.
The present, more general, case where $f$ is a measure is proved by making small adjustment
to the latter argument. We skip the details.

\section{Proof of Theorem \ref{th:S}}\label{subsec:OccField}

Before we prove Theorem \ref{th:S} let us record two of its
ready consequences.

As a first application of Theorem \ref{th:S}, we may observe that it
implies \emph{a priori} statistical information about the (extended) random field $\mathcal{S}_{N,t}$.
For instance, Theorem \ref{th:S} and the stationarity of $u(t)$ \cite[Lemma 7.1]{CKNP} together imply
that
\[
	\E \left[ \mathcal{S}_{N,t}(\psi\,,g) \right] = 0
	\quad\text{and}\quad
	\Var\left[ N^{d/2} \mathcal{S}_{N,t}(\psi\,,g)\right] \lesssim\|\psi\|_{L^2(\R^d)}^2[\lip(g)]^2,
\]
uniformly for all $N,T>0$ and $t\in[0\,,T]$, and all $\psi\in L^2(\R^d)$
and $g\in\lip$. In this way,
we may conclude that
\[
	\lim_{N\to\infty} \left| \int_{\R^d} g(u(t\,,x))\psi_N(x)\,\d x
	- \E [g(u(t\,,0))]\int_{\R^d}\psi(x)\,\d x\right|=0
	\qquad\text{in $\bigcap_{k\ge2}L^k(\R^d)$},
\]
which is a generalization of the mean ergodic theorem
of Chen et al \cite{CKNP}, albeit in the special case that $f(\R^d)<\infty$. Once again,
we emphasize that the random variables inside the absolute value
are well defined whenever $\psi\in L^2(\R^d)$, though
$\int_{\R^d}\psi(x)\,\d x$ --- hence also $\int_{\R^d}g(u(t\,,x))\psi_N(x)\,\d x$ ---
might not converge absolutely.

As a second application of Theorem \ref{th:S}
we present the following tail-probability estimate.
It shows how the behavior of the spectral integral
$\Upsilon$ in \eqref{Dalang} affects the tails of
the distribution of the occupation field, uniformly
in the latter variable $N$.

\begin{lemma}\label{lem:tightness}
	For every $\varepsilon,\delta\in(0\,,1)$ and $t \in (0, T)$
	there exists $R_0=R_0(f\,,\varepsilon\,,\delta\,,T)>1$ such that
	\begin{equation}\label{eq:tightness:2}
		\sup_{N>0}\P\left\{ N^{d/2} \left| \mathcal{S}_{N,t}(\psi\,,g)\right| > \ell\right\}
		\le \exp\left\{ -\frac{\mathsf{a}(\varepsilon)\delta\log(\ell/\mathsf{B})}{2
		\Upsilon\left(\dfrac{1-\delta}{2T} \log(\ell/\mathsf{B})\right)} \right\}
		\qquad\text{for all $\ell>R_0\mathsf{B}$},
	\end{equation}
	where $\mathsf{B}:=\mathsf{A}(\varepsilon)\lip(g)\|\psi\|_{L^2(\R^d)}\sqrt{T}$,
	and both $\mathsf{a}(\varepsilon)$
	and $\mathsf{A}(\varepsilon)$ were defined in Theorem \ref{th:S}.
\end{lemma}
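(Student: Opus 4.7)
My plan is to combine the moment bound from Theorem \ref{th:S} with Markov's inequality applied to $|Z|^k$, and then to optimize the choice of moment order $k$ so as to match the right-hand side of \eqref{eq:tightness:2}.

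First I would set $Z_N := N^{d/2}|\mathcal{S}_{N,t}(\psi,g)|$ and $L := \log(\ell/\mathsf{B})$ for brevity. Theorem \ref{th:S}, applied at the fixed $t\in(0,T)$ and at any real number $k\ge 2$, gives
\[
    \|Z_N\|_k \le \mathsf{B}\sqrt{k}\,\exp\!\left\{2T\Lambda\!\left(\tfrac{\mathsf{a}(\varepsilon)}{k}\right)\right\},
\]
uniformly in $N>0$. Markov's inequality for the $k$th absolute moment then yields
\[
    \P(Z_N>\ell) \le \frac{\|Z_N\|_k^k}{\ell^k}
    \le \exp\!\left\{-kL + \tfrac{k}{2}\log k + 2Tk\,\Lambda\!\left(\tfrac{\mathsf{a}(\varepsilon)}{k}\right)\right\}.
\]

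The crux of the argument is to select the moment order
\[
    k^* := \frac{\mathsf{a}(\varepsilon)}{\Upsilon\!\left(\frac{(1-\delta)L}{2T}\right)},
\]
which is the unique positive number satisfying $\Lambda(\mathsf{a}(\varepsilon)/k^*)=(1-\delta)L/(2T)$, by the defining relation $\Lambda=\Upsilon^{-1}$. With this choice, $2Tk^*\Lambda(\mathsf{a}(\varepsilon)/k^*)=(1-\delta)Lk^*$, so the linear-in-$L$ terms in the exponent combine as $-k^*L+(1-\delta)k^*L=-\delta Lk^*$, leaving
\[
    \log\P(Z_N>\ell) \le -\delta Lk^* + \tfrac{k^*}{2}\log k^*.
\]
Provided that the logarithmic correction $\tfrac{k^*}{2}\log k^*$ can be absorbed into half of $\delta Lk^*$, i.e.\ provided $\log k^*\le\delta L$, this reads $\log\P(Z_N>\ell)\le -\tfrac{1}{2}\delta Lk^*$, which is exactly the right-hand side of \eqref{eq:tightness:2}.

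The main (and only substantial) obstacle is therefore to verify the absorption condition $\log k^*\le \delta L$ for all $\ell$ beyond an explicit threshold. This reduces to a quantitative lower bound on $\Upsilon$ at infinity, which I would establish via the elementary inequality
\[
    \Upsilon(\lambda) \ge \frac{2\,\hat f(\{\|z\|\le R\})}{(2\pi)^d(2\lambda+R^2)}\qquad(R>0),
\]
obtained by restricting the integral in \eqref{Dalang} to the ball $\{\|z\|\le R\}$. Choosing $R$ so that $\hat f(\{\|z\|\le R\})>0$—which is possible since $f$ is nontrivial and hence $\hat f$ (as a positive tempered measure) is not identically zero—gives $\Upsilon(\lambda)\gtrsim 1/\lambda$ as $\lambda\to\infty$, and therefore $k^*\lesssim L$ and $\log k^*=O(\log L)$, which is certainly $\le\delta L$ for $L$ large. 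Choosing $R_0=R_0(f,\varepsilon,\delta,T)>1$ large enough that, whenever $\ell>R_0\mathsf{B}$, we have simultaneously $k^*\ge 2$ (so Theorem \ref{th:S} applies) and $\log k^*\le\delta L$ (so the absorption succeeds) then completes the argument. The conclusion is uniform in $N>0$ because Theorem \ref{th:S} is.
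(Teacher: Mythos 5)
Your proposal is correct and follows essentially the same route as the paper: apply the moment bound from Theorem \ref{th:S} together with Markov's inequality, choose $k=\mathsf{a}(\varepsilon)/\Upsilon\bigl((1-\delta)\log(\ell/\mathsf{B})/(2T)\bigr)$ so that the inverse relation $\Lambda=\Upsilon^{-1}$ collapses the exponential factor, and absorb the $\tfrac{k}{2}\log k$ correction using the lower bound $\Upsilon(\lambda)\gtrsim 1/\lambda$ (which the paper derives exactly as you do, by restricting the spectral integral in \eqref{Dalang} to a small ball where $\hat f$ has positive mass). The only cosmetic difference is that the paper invokes the continuity of $\hat f$ and $\hat f(0)=f(\R^d)>0$ to locate such a ball, whereas you argue directly from positivity of the measure $\hat f$; both are valid under \eqref{f:finite}.
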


\begin{proof}
	For every $k\ge 2$, $t\in(0, T),\ell>0$, $\varepsilon\in(0\,,1)$, $\psi\in L^2(\R^d)$, and $g\in\lip$,
	\begin{equation}\label{eq:tightness:1}
		\sup_{N>0}\P\left\{ N^{d/2} \left| \mathcal{S}_{N,t}(\psi\,,g)\right| > \ell\right\}
		\le \exp\left\{-k\left[ \log\left(\frac{\ell}{\mathsf{B}}\right)
		-2T\Lambda \left( \frac{\mathsf{a}(\varepsilon)}{k}\right)
		- \tfrac12\log k\right]\right\}.
	\end{equation}
	The  inequality \eqref{eq:tightness:1}
	is an immediate consequence of Theorem \ref{th:S}
	and Chebyshev's inequality. We intend to apply \eqref{eq:tightness:1}
	with
	\[
		k =\frac{\mathsf{a}(\varepsilon)}{\Upsilon\left(
		\dfrac{1-\delta}{2T}\log(\ell/\mathsf{B})\right)},
	\]
	which is $\ge 2$ provided that $\ell/\mathsf{B}$ is sufficiently large
	since $\Upsilon$ vanishes at infinity.
	Since $\Lambda$ and $\Upsilon$ are inverses to one another, it
	then follows from \eqref{eq:tightness:1} that, as long as $\ell/\mathsf{B}$ is
	large enough,
	\begin{align*}
		&\sup_{N>0}\P\left\{ N^{d/2} \left| \mathcal{S}_{N,t}(\psi\,,g)\right| > \ell\right\}\\
		&\le \exp\left\{ -\frac{\mathsf{a}(\varepsilon)}{
			\Upsilon\left(\dfrac{1-\delta}{2T} \log(\ell/\mathsf{B})\right)}
			\left(\delta\log(\ell/\mathsf{B})-{ \frac{1}{2}}\log\left[
			\frac{\mathsf{a}(\varepsilon)}{\Upsilon\left(
			\dfrac{1-\delta}{2T}\log(\ell/\mathsf{B})\right)}\right]\right)\right\}.
	\end{align*}
	Next, observe from \eqref{Dalang} that
	\begin{equation}\label{LUL}
		\lambda\Upsilon(\lambda) \ge \frac{2}{(2\pi)^d}\int_{\|z\|<1}
		\frac{\hat{f}(z)\,\d z}{2+{\|z/\lambda\|}}
		\ge c:=\frac{{2}}{{3}(2\pi)^d}\int_{\|z\|<1}
		\hat{f}(z)\,\d z\qquad\text{whenever $\lambda>1$}.
	\end{equation}
	Because $\hat{f}$ is continuous
	and $\hat{f}(0)=f(\R^d)$,
	\eqref{f:finite} implies
	that $c$ is a strictly-positive real number.
	In particular,
	\[
		\delta\log(\ell/\mathsf{B})-{ \frac{1}{2}}\log\left[
		\frac{\mathsf{a}(\varepsilon)}{\Upsilon\left(
		\dfrac{1-\delta}{2T}\log(\ell/\mathsf{B})\right)}\right] \ge \frac\delta2\log(\ell/\mathsf{B}),
	\]
	as long as $\ell/\mathsf{B}$ is sufficiently large. Now we choose $R_0$ accordingly,
	all the time keeping careful track of the various parameter dependencies. This completes
	the proof.
\end{proof}

Equations \eqref{eq:tightness:1} and \eqref{eq:tightness:2} are essentially
equivalent. Moreover, they provide tail-probability estimates that
depend crucially on the rate at which $\Upsilon(\lambda)$ tends to zero as $\lambda\to\infty$.
Unfortunately, these tail-probability estimates are not particularly strong, though we have reason
to believe that they are not essentially improvable. For instance,  we might observe
from \eqref{LUL} that
\[
	\Upsilon(\lambda) \ge \frac{c}{\lambda}\qquad\text{for all $\lambda>1$},
\]
where $c>0$ does not depend on $\lambda$. Thus, it follows that
whenever $\ell/\mathsf{B}$ is sufficiently large,
\begin{equation}\label{eq:best-case}
	\exp\left\{ -\frac{\mathsf{a}(\varepsilon)\delta\log(\ell/\mathsf{B})}{
	2\Upsilon\left(\dfrac{1-\delta}{2T} \log(\ell/\mathsf{B})\right)} \right\}
	\ge \e^{ -
	\text{\rm const}\cdot \left| \log(\ell/\mathsf{B})\right|^2}
	\quad\text{whenever $\ell/\mathsf{B}\gg1$}.
\end{equation}
Since $|\log(\ell/\mathsf{B})|^2\to\infty$ slowly as $\ell/\mathsf{B}\to\infty$,
this shows that \eqref{eq:tightness:2} fails to produce fast decay of the tail probabilities:
The best rate we could hope for is given by the right-hand side
of \eqref{eq:best-case}.\footnote{%
	That rate can be achieved. For instance, suppose $f$ is bounded and continuous,
	as would happen for example if $\hat{f}\in L^1(\R^d)$. Then,
	$(\bm{v}_\lambda*f)(0)\le f(0)/\lambda$, and \eqref{v:f} shows that the
	right-hand side of \eqref{eq:tightness:2} is not greater than
	$\exp\{-\text{\rm const}\cdot|\log(\ell/\mathsf{B})|^2)$.}
And even the above bound is not a worst-possible case. For instance, suppose
$d=1$. In that case,
$\Upsilon(\lambda)\le f(\R)\pi^{-1}\int_{-\infty}^\infty(2\lambda+z^2)^{-1}\,\d z=
f(\R)/\sqrt{2\lambda}$
for every $\lambda>0$,
whence we obtain only\footnote{This rate is also unimprovable as can be seen by
inspecting the case $f=\delta_0$, for then $\Upsilon(\lambda)\propto\lambda^{-1/2}$
for all $\lambda>0$.}
\[
	\sup_{N>0}\P\left\{ \sqrt{N} \left| \mathcal{S}_{N,t}(\psi\,,g)\right| > \ell\right\}
	\le \exp\left\{ -\frac{\sqrt{T/2}\,\mathsf{a}(\varepsilon)\delta
	\left|\log(\ell/\mathsf{B})\right|^{3/2}}{
	f(\R)\sqrt{1-\delta}} \right\}
	\qquad\text{for all $\ell>R_0\mathsf{B}$}.
\]

We now return to Theorem \ref{th:S}, whose proof will require a preliminary lemma, and
follows the general ideas of Chen et al \cite{CKNP}. It has been proved in Chen
et al \cite[Theorem 6.4]{CKNP} that,
for each $t>0$ and $x\in\R^d$, the random variable
$u(t\,,x)$ is in the Gaussian Sobolev space $\mathbb{D}^{1,k}$  (see Nualart
\cite[Section 1.5]{Nualart}) for every
$k\ge2$, and that
\begin{equation}\label{D<p}
	\left\| D_{z,s}u(t\,,x) \right\|_k \lesssim \bm{p}_{t-s}(y-z),
\end{equation}
for all $t>0$ and $x\in\R^d$ and for a.e.\ $(s\,,z) \in (0\,,t)\times \R^d$,
where the implied constant depends only on
$(t\,,k)$. The following finds a numerical bound for that implied constant.

\begin{lemma}\label{lem:Du}
	For all real numbers $0<\varepsilon<1$,
	$T\ge t>0$, and $k\ge2$,
	and for every $x\in\R^d$,
	\begin{equation}  \label{D<p2}
		\left\| D_{s,z}u(t\,,x) \right\|_k \le
		\frac{{ 8}\left( |\sigma(0)|\vee\lip(\sigma)\right)
		\e^{2T\Lambda(\mathsf{a}(\varepsilon)/k)}} { {\varepsilon^{3/2}}}\,
		\bm{p}_{t-s}(x-z),
	\end{equation}
	valid for a.e.\ $(s\,,z)\in(0\,,t)\times\R^d$,where $\mathsf{a}(\varepsilon)$
	was defined in Theorem \ref{th:S}.
\end{lemma}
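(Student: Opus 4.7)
The plan is to rerun the argument of \cite[Theorem 6.4]{CKNP} that gives \eqref{D<p}, tracking all constants explicitly. Differentiating the mild equation \eqref{mild} in the Malliavin sense and applying the chain rule in $\mathbb{D}^{1,k}$ yields, for a.e.\ $(s,z)\in(0,t)\times\R^d$,
\[
	D_{s,z}u(t,x)=\bm{p}_{t-s}(x-z)\sigma(u(s,z))+\int_{(s,t)\times\R^d}\bm{p}_{t-r}(x-y)\,\Sigma(r,y)\,D_{s,z}u(r,y)\,\eta(\d r,\d y),
\]
where $\Sigma$ is a predictable field with $|\Sigma|\le\lip(\sigma)$. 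Taking $L^k(\Omega)$-norms, applying \eqref{BDG} to the stochastic integral and Minkowski's inequality to the deterministic term (with $|\sigma(u)|\le\kappa(1+|u|)$ for $\kappa:=|\sigma(0)|\vee\lip(\sigma)$), I obtain
\[
	\|D_{s,z}u(t,x)\|_k^2\le 2\kappa^2\bm{p}_{t-s}(x-z)^2\bigl(1+\|u(s,z)\|_k\bigr)^2+8k[\lip(\sigma)]^2 J,
\]
where $J$ denotes the BDG convolution integral in $(r,y,y')$ involving $\|D_{s,z}u(r,\cdot)\|_k$ and $f$.

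The central reduction rests on the Brownian-bridge factorization
\[
	\bm{p}_{t-r}(x-y)\bm{p}_{r-s}(y-z)=\bm{p}_{t-s}(x-z)\,\bm{p}_{\tau(r)}(y-\bar y(r)),\qquad \tau(r):=\frac{(t-r)(r-s)}{t-s}.
\]
Under the inductive a priori hypothesis $\|D_{s,z}u(r,y)\|_k\le C\,\e^{\lambda(r-s)}\bm{p}_{r-s}(y-z)$, this identity pulls $\bm{p}_{t-s}(x-z)^2$ cleanly out of $J$ and reduces the spatial integrals to $(\bm{p}_{2\tau(r)}*f)(0)$. Passing to the Fourier side via \eqref{v:f} and using the elementary inequality $\tau(r)\ge\tfrac12\min(r-s,t-r)$, a Fubini computation yields $\int_s^t\e^{-2\lambda(t-r)}(\bm{p}_{2\tau(r)}*f)(0)\,\d r\le 2\Upsilon(\lambda)$, and consequently $J\le 2C^2\bm{p}_{t-s}(x-z)^2\e^{2\lambda(t-s)}\Upsilon(\lambda)$.

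To close the loop I divide the BDG inequality by $\bm{p}_{t-s}(x-z)^2\e^{2\lambda(t-s)}$, take the supremum over $0<s<r\le T$ and $x,z\in\R^d$, and combine with the a priori $L^k$ moment estimate $\|u(s,z)\|_k\lesssim\e^{\lambda s}$ from \cite{CKNP}; this produces a closed scalar inequality of the form $C_k(T)^2\le A(\varepsilon)+16[\lip(\sigma)]^2\cdot k\Upsilon(\lambda)\cdot C_k(T)^2$. Choosing $\lambda=\Lambda(\mathsf{a}(\varepsilon)/k)$ forces $k\Upsilon(\lambda)=\mathsf{a}(\varepsilon)$; the definition $\mathsf{a}(\varepsilon)=(1-\varepsilon)^2/[2^{(d+6)/2}\kappa^2]$ is calibrated precisely so that $16[\lip(\sigma)]^2\mathsf{a}(\varepsilon)\le(1-\varepsilon)^2$, whence $C_k(T)^2\le A(\varepsilon)/\varepsilon$. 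Taking square roots and multiplying back by the weight $\e^{\lambda(r-s)}\le\e^{2T\lambda}$ delivers \eqref{D<p2}.

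The hard part will be the careful bookkeeping of numerical constants. The factor $2^{(d+6)/2}$ in $\mathsf{a}(\varepsilon)$ is forced by the Fourier-side comparison of $(\bm{p}_{2\tau(r)}*f)(0)$ against the resolvent $(\bm{v}_\lambda*f)(0)=\Upsilon(\lambda)$; the $\varepsilon^{-3/2}$ prefactor splits as an $\varepsilon^{-1}$ arising from the Gronwall closure above together with an $\varepsilon^{-1/2}$ that is absorbed when the $(1+\|u(s,z)\|_k)$ factor is controlled by the $\varepsilon$-dependent moment estimate on $u$. Convergence of the Picard iteration in the weighted norm $\{\phi:\|\phi(r,y)\|_k\le C\,\e^{\lambda(r-s)}\bm{p}_{r-s}(y-z)\}$ is guaranteed by the qualitative bound \eqref{D<p} of \cite[Theorem 6.4]{CKNP}, so this lemma is really a quantitative refinement rather than a fundamentally new estimate.
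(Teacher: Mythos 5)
Your proposal follows essentially the same route as the paper, except that the paper does not re-derive the Malliavin-derivative bound from scratch: it simply quotes the explicit inequality (6.4) of the companion paper \cite{CKNP}, which already has the form $\|D_{s,z}u(t,x)\|_k\le 2C_{T,k}\e^{\lambda_0(t-s)}(1-2^{(d+2)/2}[z_k\lip(\sigma)]^2\Upsilon(\lambda_0))^{-1/2}\bm{p}_{t-s}(x-z)$, then (i) replaces the optimal BDG constant $z_k$ by $2\sqrt k$ and sets $\lambda_0=\Lambda(\mathsf{a}(\varepsilon)/k)$ so the denominator is at least $\sqrt\varepsilon$, and (ii) bounds $C_{T,k}=\sup_n\|\sigma(u_n)\|_k$ by running the weighted-norm recursion on the Picard iterates, yielding $C_{T,k}\le 4\kappa\e^{T\Lambda(\mathsf{a}(\varepsilon)/k)}/\varepsilon$. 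Your sketch reconstructs step (i) by unpacking the fixed-point argument behind (6.4), and defers step (ii) to the known moment bound; both moves are legitimate. Two bookkeeping points to repair, which matter because the lemma \emph{is} the bookkeeping: your displayed closed inequality carries the coefficient $16[\lip(\sigma)]^2 k\Upsilon(\lambda)$, but the Fourier-side comparison contributes the dimensional factor $2^{(d+2)/2}$ there, so the correct coefficient is $2^{(d+6)/2}k[\lip(\sigma)]^2$; with only $16$, your claimed calibration $16[\lip(\sigma)]^2\mathsf{a}(\varepsilon)\le(1-\varepsilon)^2$ actually fails for $d=1$. Also, the split of $\varepsilon^{-3/2}$ is the reverse of what you state: the contraction/Gronwall closure gives $(1-(1-\varepsilon)^2)^{-1/2}\le\varepsilon^{-1/2}$, while the $\varepsilon^{-1}$ comes from the moment estimate on $u$ (equivalently on $C_{T,k}$).
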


\begin{proof}
	Let $z_k$ denote the optimal constant in the BDG $L^k(\Omega)$-inequality
	for every real number $k\ge2$. Davis \cite{Davis1976}
	has evaluated $z_k$ in terms of the smallest root of a certain special function.
	Carlen and Kre\'e \cite{CarlenKree1991} have in turn shown that
	\[
		z_k \le 2\sqrt k\quad\text{for every $k\ge2$, and}\quad
		\sup_{\ell\ge2}\left( z_\ell/\sqrt \ell\right)=2.
	\]
	According to Chen et al \cite[(6.4)]{CKNP},
	\begin{equation}\label{|D|}
		\left\| D_{s,z}u(t\,,x) \right\|_k \le \frac{2C_{T,k}\e^{\lambda_0(t -s)}}{\sqrt{%
		1 - 2^{(d+2)/2} \left[z_k\lip  (\sigma)\right]^2
		\Upsilon(\lambda_0)}}
		\,\bm{p}_{t-s}(x-y),
	\end{equation}
	uniformly for all $0<t\le T$, $x\in\R^d$, and $k\ge2$,
	and for almost all $(s\,,z)\in(0\,,t)\times\R^d$. The constant $C_{T,k}$
	will be discussed shortly, and the preceding holds
	for all $\lambda_0$ large enough to ensure
	that $\Upsilon(\lambda_0) < 2^{-(d+2)/2}[ z_k\lip  (\sigma)]^{-2}$,
	equivalently $\lambda_0 > \Lambda(2^{-(d+2)/2}[ z_k\lip  (\sigma)]^{-2})$.
	Since $\Lambda$ is strictly decreasing
	and $z_k\le 2\sqrt{k}$ for all $k\ge1$, \eqref{|D|} holds with $z_k$
	replaced by $2\sqrt{k}$
	whenever $\lambda_0>\Lambda(1/\{k2^{(d+4)/2}[\lip (\sigma)]^2\}).$ Set
	\[
		\lambda_0:=\Lambda\left( \frac{%
		(1-\varepsilon)^2}{%
		k2^{(d+6)/2}[\lip(\sigma)]^2}\right),
	\]
	to obtain
	\begin{equation}\label{D:C}\begin{split}
		\left\| D_{s,z}u(t\,,x) \right\|_k &\le \frac{2C_{T,k}}{%
			{\sqrt{\varepsilon}}}\exp\left\{(t -s) \Lambda\left(
			\frac{(1-\varepsilon)^2}{k2^{(d+6)/2}[\lip(\sigma)]^2}\right)\right\}
			\,\bm{p}_{t-s}(x-y)\\
		&\le \frac{2C_{T,k}\e^{T\Lambda(\mathsf{a}(\varepsilon)/k)}}{%
			\sqrt{\varepsilon}}\,\bm{p}_{t-s}(x-y).
	\end{split}\end{equation}

	Now we address numerical bounds for the constant $C_{T,k}$. According to
	Chen et al \cite[Theorem 6.4]{CKNP}, we can select
	\[
		C_{T,k} := \sup_{t\in(0,T)}\sup_{x\in\R^d}\sup_{n\ge0}
		\|\sigma(u_n(t\,,x))\|_k,
	\]
	where
	\[
		u_{n+1}(t\,,x) = 1 + \int_{(0,t)\times\R^d}
		\bm{p}_{t-s}(x-y)\sigma(u_n(s\,,y))\,\eta(\d s\,\d y)
	\]
	denotes the $(n+1)$st-stage Picard iteration estimate of $u$ for all $n\ge1$,
	and $u_0(t\,,x) = 1$ for all $t\ge0$ and $x\in\R^d$. We warn that $u_n$ does not
	refer to the operation, defined in \eqref{psi_N}, that is applicable to a single spatial
	function on $\R^d$.

	Since $\sigma$ is Lipschitz continuous,
	\begin{equation}\label{C:le}
		C_{T,k} \le |\sigma(0)| + \lip(\sigma)\sup_{t\in(0,T)}\sup_{x\in\R^d}\sup_{n\ge0}
		\|u_n(t\,,x)\|_k.
	\end{equation}
	For every space-time random field $\Phi=\{\Phi(t\,,x)\}_{t\ge0,x\in\R^d}$
	and for all $k\ge2$ and $\beta>0$, define
	\[
		\mathcal{N}_{\beta,k}(\Phi) := \sup_{t\ge0}\sup_{x\in\R^d}
		\left( \e^{-\beta t}\|\Phi(t\,,x)\|_k\right).
	\]
	Our proof of \eqref{D<p} (see \cite[(5.9)]{CKNP}) hinges on the fact that
	\[
		\mathcal{N}_{\beta,k}(u_{n+1}) \le 1 + \left( |\sigma(0)| +
		\lip(\sigma)\mathcal{N}_{\beta,k}(u_n)\right)\sqrt{2k
		\Upsilon(\beta)},
	\]
	for all real numbers $k\ge2$ and $\beta>0$, and all integers $n\ge0$. Now suppose
	$\beta$ is so large that
	\[
		\Upsilon(\beta) \le
		\frac{(1-\varepsilon)^2}{2k\left\{ |\sigma(0)|\vee\lip(\sigma)\right\}^2}
		\quad\Leftrightarrow\quad
		\beta\ge\Lambda\left(\frac{(1-\varepsilon)^2}{
		2k\left\{ |\sigma(0)|\vee\lip(\sigma)\right\}^2}\right).
	\]
	For all values of $\beta$, we have
	$\mathcal{N}_{\beta,k}(u_{n+1}) \le { 2} + (1-\varepsilon)
	\mathcal{N}_{\beta,k}(u_n),$
	which yields the following upon iteration for every $n\ge0$:
	\[
		\mathcal{N}_\beta(u_{n+1}) \le { 2}\sum_{j=0}^n (1-\varepsilon)^j
		+ (1-\varepsilon)^{n+1}\mathcal{N}_\beta(u_0) \leq
		\frac{{ 2}(1-(1-\varepsilon)^{n+2})}{\varepsilon}.
	\]
	We choose the smallest such $\beta$, and unscramble the preceding to find that
	\[
		\sup_{x\in\R^d}\sup_{n\ge0}
		\|u_{n+1}(t\,,x)\|_k
		\le {\frac{{ 2}}{\varepsilon}}
		\exp\left\{t\Lambda\left(\frac{(1-\varepsilon)^2}{
		2k\left\{ |\sigma(0)|\vee\lip(\sigma)\right\}^2 }\right)\right\}
		\le { 2}\varepsilon^{-1}\e^{T\Lambda(\mathsf{a}(\varepsilon)/k)},
	\]
	valid for every real number $k\ge2$ and $t>0$, and all integers $n\ge0$.
	Since $u_0(t\,,x)=1$ and $\varepsilon\in(0\,,1)$,
	the right-most quantity in the previous display also bounds
	$\|u_0(t\,,x)\|_k=1$ from above. Therefore, \eqref{C:le} yields
	\[
		C_{T,k} \le |\sigma(0)| + \frac{{ 2}\lip(\sigma)
		\e^{T\Lambda(\mathsf{a}(\varepsilon)/k)}}{\varepsilon}
		\le \frac{{ 4}\left( |\sigma(0)|\vee\lip(\sigma)\right)
		\e^{T\Lambda(\mathsf{a}(\varepsilon)/k)}}{\varepsilon}.
	\]
	The lemma follows from this and \eqref{D:C}.
\end{proof}

In order to prove Theorem \ref{th:S}, we need the following technical result, which enables us to exchange the Malliavin derivative and integral.
Recall that for $g\in\lip$, Rademacher's theorem (see Federer
	\cite[Theorem 3.1.6]{Federer})
	ensures that $g$ has a weak derivative
	whose essential supremum is $\lip(g)$. Let $g'$ denote any measurable version of that derivative.

\begin{lemma}\label{lem:DS}
	Fix $t,N>0$, $\psi\in L^1(\R^d)\cap L^2(\R^d)$, and $g\in\lip$. Then,
	$\mathcal{S}_{N,t}(\psi\,,g)\in\mathbb{D}^{1,k}$
	for every $k\ge2$, and
	\[
		D_{s,z}\mathcal{S}_{N,t}(\psi\,,g) = \int_{\R^d}g'(u(t\,,x)) D_{s,z}u(t\,,x)
		\psi_N(x)\,\d x,
	\]
	for almost every $(s\,,z\,,\omega)\in\R_+\times\R^d\times\Omega$.
\end{lemma}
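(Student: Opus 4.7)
The strategy is a two-stage approximation: first reduce to the case of a smooth Lipschitz function $g$ by mollification, and within that case, commute the Malliavin derivative with the spatial integral via a Fubini-type argument. Let $\rho_\varepsilon$ be a smooth, compactly supported mollifier on $\R$ and set $g_\varepsilon := g * \rho_\varepsilon$. Then $g_\varepsilon \in C^\infty(\R)$, $\lip(g_\varepsilon) \le \lip(g)$, $|g_\varepsilon'| \le \lip(g)$, $\|g_\varepsilon - g\|_\infty \le \lip(g)\,\varepsilon$ (because $g$ is Lipschitz), and $g_\varepsilon'(y) \to g'(y)$ for Lebesgue-a.e.\ $y \in \R$.

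\textbf{Step 1 (smooth case).} For fixed $\varepsilon>0$, the classical Malliavin chain rule (see Nualart \cite{Nualart}, Prop.~1.2.3) and the fact that $u(t,x)\in\mathbb{D}^{1,k}$ for every $k\ge 2$ (Chen et al.\ \cite{CKNP}) yield $g_\varepsilon(u(t,x))\in\mathbb{D}^{1,k}$ with
$$
D_{s,z}\,g_\varepsilon(u(t\,,x)) = g_\varepsilon'(u(t\,,x))\,D_{s,z}u(t\,,x).
$$
Define $F_\varepsilon := \int_{\R^d} g_\varepsilon(u(t,x))\psi_N(x)\,\d x$, which is a well-defined element of $L^k(\Omega)$ since $\psi \in L^1(\R^d)$ and $\sup_x\|g_\varepsilon(u(t,x))\|_k$ is finite by stationarity and the $L^k$-continuity of $u(t,\cdot)$. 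I plan to show $F_\varepsilon \in \mathbb{D}^{1,k}$ and
$$
D_{s,z}F_\varepsilon = \int_{\R^d} g_\varepsilon'(u(t\,,x))\,D_{s,z}u(t\,,x)\,\psi_N(x)\,\d x
$$
by either (a) invoking the Fubini theorem for the Malliavin derivative (Nualart \cite{Nualart}, Prop.~1.2.4), or more directly (b) approximating the integral by Riemann sums $F_\varepsilon^{(n)} := \sum_i g_\varepsilon(u(t,x_i))\psi_N(x_i)\Delta x_i$ (each in $\mathbb{D}^{1,k}$ by linearity and the chain rule), showing $F_\varepsilon^{(n)}\to F_\varepsilon$ in $L^k(\Omega)$ and $DF_\varepsilon^{(n)}\to \int g_\varepsilon'(u(t,\cdot)) Du(t,\cdot)\,\psi_N\,\d x$ in $L^k(\Omega; L^2((0,t)\times\R^d))$, then appealing to closability of $D$. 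The key quantitative input is Lemma \ref{lem:Du}, which provides the uniform bound $\|D_{s,z}u(t,x)\|_k \le C\,\bm{p}_{t-s}(x-z)$. Combined with Minkowski's integral inequality, this yields
$$
\E\!\left[\left(\int_0^t\!\!\int_{\R^d}\!\Big|\!\int g_\varepsilon'(u(t,x))D_{s,z}u(t,x)\psi_N(x)\,\d x\Big|^2\d z\,\d s\right)^{k/2}\right]^{1/k}
\!\!\le C\lip(g)\int_0^t\!\|\bm{p}_{t-s}*|\psi_N|\|_{L^2(\R^d)}\,\d s,
$$
which is finite for $\psi\in L^1\cap L^2$, providing the integrability needed to justify the commutation and to control the Hilbert-space valued limit.

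\textbf{Step 2 (passage to Lipschitz $g$).} Uniform convergence $g_\varepsilon\to g$ together with stationarity and $\psi\in L^1(\R^d)$ immediately give $F_\varepsilon \to F := \int g(u(t,x))\psi_N(x)\,\d x$ in $L^k(\Omega)$. It remains to pass to the limit in $DF_\varepsilon$. Here I use that for each $t>0$ and $x\in\R^d$, the random variable $u(t,x)$ has an absolutely continuous distribution (established under $\sigma(1)\ne 0$ and Dalang's condition by standard Malliavin criteria). Consequently $g_\varepsilon'(u(t,x))\to g'(u(t,x))$ almost surely, for any chosen measurable version of $g'$, and the formula is independent of that choice. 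The uniform bound $|g_\varepsilon'|\le \lip(g)$ together with the domination of Lemma \ref{lem:Du} (as in Step 1) allows dominated convergence inside the $L^k(\Omega; L^2((0,t)\times\R^d))$ norm, yielding $DF_\varepsilon \to \int g'(u(t,\cdot))D u(t,\cdot)\psi_N\,\d x$ in that norm. Closability of the Malliavin derivative (Nualart \cite{Nualart}, Prop.~1.2.1) then gives $F\in\mathbb{D}^{1,k}$ and the asserted identity for $DF$.

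\textbf{Main obstacle.} The principal technical point is the domination required to pass $\varepsilon \downarrow 0$ inside the $L^k(\Omega; L^2)$ norm of the Malliavin derivative. The Gaussian kernel bound $\|D_{s,z}u(t,x)\|_k \lesssim \bm{p}_{t-s}(x-z)$ of Lemma \ref{lem:Du} is precisely the ingredient that turns the $L^1$-assumption on $\psi$ (plus $L^2$) into an honest integrable majorant, via Minkowski's inequality and the heat-semigroup estimate $\int_0^t \|\bm{p}_{t-s}*|\psi_N|\|_{L^2}\,\d s < \infty$. Once that domination is in place, everything else is standard machinery.
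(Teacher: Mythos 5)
Your proposal is correct in its essentials and does reach the same conclusion, but it takes a genuinely different route from the paper's proof. The paper's argument is short because it directly invokes the Lipschitz-function chain rule $D_{s,z}\,g(u(t,x))=g'(u(t,x))D_{s,z}u(t,x)$, which was already established in the companion paper \cite{CKNP}; the commutation of $D$ with the $x$-integral is then handled by approximating $\mathcal{S}_{N,t}(\psi,g)$ by Riemann sums (for $\psi\in C_c(\R^d)$) and invoking linearity and closability of $D$, after which a density argument extends from $C_c(\R^d)$ to $L^1\cap L^2$. You instead re-derive the Lipschitz chain rule from scratch by mollifying $g$, applying the classical $C^1$ chain rule, and passing to the limit using the absolute continuity of the law of $u(t,x)$ together with the uniform bound $|g_\varepsilon'|\le\lip(g)$ and the Gaussian-kernel estimate of Lemma~\ref{lem:Du}. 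This is a legitimate and more self-contained path — it makes explicit the role of absolute continuity (which is indeed needed to identify the bounded random weight in Nualart's Lipschitz chain rule as $g'(u(t,x))$ for a fixed version of $g'$), whereas the paper pushes that issue into the cited reference. Both proofs rest on Riemann-sum approximation plus closability of $D$ and on Lemma~\ref{lem:Du} as the key quantitative input.

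Two minor omissions worth noting. First, your Riemann-sum step $F_\varepsilon^{(n)}=\sum_i g_\varepsilon(u(t,x_i))\psi_N(x_i)\Delta x_i$ requires pointwise evaluation of $\psi_N$, which is not meaningful for a general $L^1\cap L^2$ function; like the paper, you should first carry out the argument for $\psi\in C_c(\R^d)$ and then extend by density using the uniform estimate from Theorem~\ref{th:S} (equivalently, from the Minkowski-plus-Lemma~\ref{lem:Du} bound you already record). Second, the absolute continuity of the law of $u(t,x)$ deserves a citation: you assert it as "standard Malliavin criteria," and while it is true under $\sigma(1)\ne 0$ and Dalang's condition, the proof is not immediate; the paper sidesteps this by appealing to \cite{CKNP} where the required chain-rule statement is proved directly. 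Neither omission is a real gap in the argument, only in the documentation.
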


\begin{proof}

	Suppose first that $\psi\in C_c(\R^d)$. As it  has been mentioned before,
	we have shown in \cite{CKNP} that
	$D_{s,z}g(u(t\,,x))=g'(u(t\,,x))D_{s,z}u(t\,,x)$ a.s.\ for almost all
	$(s\,,z)\in\R_+\times\R^d$. We can approximate $\mathcal{S}_{N,t}(\psi\,,g)$ by discrete Riemann sums and then
    use the linearity and closability of the Malliavin derivative (see Nualart \cite[Proposition 1.2.1]{Nualart})
    to imply the result in this case. The general case follows
	from a density argument.
\end{proof}

Armed with Lemmas \ref{lem:Du} and \ref{lem:DS}, we proceed with a demonstration of Theorem \ref{th:S}.
\begin{proof}[Proof of Theorem \ref{th:S}]
	Define the random variable
	\[
		F := \int_{\R^d} g(u(t\,,x))\psi_N(x)\,\d x.
	\]
	By Lemma  \ref{lem:DS}, $F$ lies in the Gaussian Sobolev space
	$\mathbb{D}^{1,k}$ for every $k\ge2$, and
	\[
		D_{s,z}F = \int_{\R^d} g'(u(t\,,x)) D_{s,z}u(t\,,x) \psi_N(x)\,\d x,
	\]
	almost surely for a.e.\ $(s\,,z)\in\R_+\times\R^d$.
	Apply the Clark--Ocone formula, in the form given in \cite[Proposition 4.3]{CKNP}, in order to see that
	\[
		F-\E(F)
		= \int_{(0,t)\times\R^d}\eta(\d s\,\d z)\int_{\R^d}\psi_N(x)\,\d x\
		\E\left( g'(u(t\,,x)) D_{s,z}u(t\,,x) \mid \mathcal{F}_s\right),
	\]
	almost surely.  To simplify the notation,
	define
	\[
		L(s\,,z) := \int_{\R^d}\psi_N(x)
		\E\left( g'(u(t\,,x)) D_{s,z}u(t\,,x) \mid \mathcal{F}_s\right)\d x,
	\]
	so that the preceding can be restated as $F-\E(F)=\int_{(0,t)\times\R^d}L\,\d\eta$.
	Thus, the BDG inequality \eqref{BDG} implies the following Poincar\'e inequality:
	\[
		\| F-\E(F)\|_k
		\le 2\sqrt{k\int_0^t \left( \|L(s\,,\bullet)\|_k *
		\widetilde{\| L(s\,,\bullet)\|}_k * f\right)(0)\,\d s}.
	\]
	Since $\|g'\|_{L^\infty(\R^d)} =\lip(g)$, it follows from the conditional
	Jensen inequality that
	\begin{align*}
		\|L(s\,,z)\|_k &\le \lip(g)\int_{\R^d}|\psi_N(x)| \left\| D_{s,z}u(t\,,x)\right\|_k\,\d x\\
		&\le \frac{{ 8}\lip(g)\left( |\sigma(0)|\vee\lip(\sigma)\right)
		\e^{2T\Lambda(\mathsf{a}(\varepsilon)/k)}}{{\varepsilon^{3/2}}}
		\left( |\psi_N|*\bm{p}_{t-s}\right)(z);
	\end{align*}
	see Lemma \ref{lem:Du} for the last line. Therefore, we can combine the above
	bounds with the semigroup property of the heat kernel
	in order to reach the following conclusion:
	\[
		\| F-\E(F)\|_k\le\frac{{ 16}\lip(g)\left( |\sigma(0)|\vee\lip(\sigma)\right)
		\e^{2T\Lambda(\mathsf{a}(\varepsilon)/k)}}{ {\varepsilon^{3/2}}}
		\sqrt{k\int_0^t\left( |\psi_N| * |\tilde{\psi}_N| *
		\bm{p}_{2(t-s)} * f\right)(0)\,\d s
		}.
	\]
	In accord with Young's inequality for convolutions,
	$|\psi_N|*|\tilde{\psi}_N|\le\|\psi_N\|_{L^2(\R^d)}^2
	= N^{-d}\|\psi\|_{L^2(\R^d)}^2$ a.e. This implies that
	\[
		\left( |\psi_N| * |\tilde{\psi}_N| *\bm{p}_{2(t-s)} * f\right)(0)
		\le N^{-d}\|\psi\|_{L^2(\R^d)}^2\int_{\R^d}\left(
		\bm{p}_{2(t-s)}*f\right)(x)\,\d x = N^{-d}\|\psi\|_{L^2(\R^d)}^2f(\R^d),
	\]
	and concludes the proof.
\end{proof}

\section{Short-range dependence}

Let $U:=\{U(x)\}_{x\in\R^d}$ be a stationary random field such that
$\E(|U(0)|^2)<\infty$. Recall that $U$ is  said to be \emph{short-range
dependent} if
\[
	\int_{\R^d}\left| \Cov\left[ U(x)\,,U(0)\right]\right|\d x<\infty.
\]
It is a well-known observation that when $U$ is short-range dependent, the
non-random quantity
$\chi:=\int_{\R^d}\Cov[U(x)\,,U(0)]\,\d x$ is finite and absolutely convergent, and
\begin{align*}
	\Var\left( \frac{1}{N^{d/2}}\int_{[0,N]^d}U(x)\,\d x\right)
		&= \frac{1}{N^d}\int_{[0,N]^d}\d x\int_{[0,N]^d}\d y\
		\Cov\left[ U(x-y)\,,U(0)\right]\\
	&\to\chi\qquad\text{as $N\to\infty$}.
\end{align*}

\subsection{Asymptotics for the variance}
Among other things, in this section we will prove as a direct consequence of
\eqref{f:finite} that, whenever $g\in\lip$,
the stationary and square-integrable random field $g(u(t\,,\bullet))$ is short-range dependent.
We explore some consequences of this short-range dependence as well.
\begin{lemma}\label{lem:Cov:int}
	For every $t,T\ge0$ and $g,G\in\lip$,
	\[
		\int_{\R^d}\left| \Cov\left[ g(u(t\,,x)) \,, G(u(T\,,0))\right] \right|\d x
		<\infty.
	\]
	Consequently, $g(u(t\,,\bullet))$ is short-range dependent for every
	$t\ge0$ and $g\in\lip$.
\end{lemma}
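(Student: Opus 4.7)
The plan is to represent $g(u(t,x))$ and $G(u(T,0))$ as stochastic integrals via the Clark--Ocone formula, express their covariance through the It\^o--Walsh isometry as a bilinear pairing against the spatial covariance measure $f$, bound each integrand by a Gaussian heat kernel using Lemma \ref{lem:Du}, and then integrate in $x$ using Fubini together with the fact that heat kernels and $f$ both have finite mass.

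Concretely, I would first fix $x\in\R^d$ and apply the Clark--Ocone formula (as in the proof of Theorem \ref{th:S}) to both $g(u(t,x))$ and $G(u(T,0))$; writing
\[
    \Xi_s(x,z) := \E\left[g'(u(t,x))D_{s,z}u(t,x)\mid\mathcal{F}_s\right],\quad
    \Theta_s(z) := \E\left[G'(u(T,0))D_{s,z}u(T,0)\mid\mathcal{F}_s\right],
\]
the It\^o--Walsh isometry for the underlying noise yields
\[
    \Cov[g(u(t,x)),G(u(T,0))]=\int_0^{t\wedge T}\d s\int_{\R^d}\!\int_{\R^d}\E\left[\Xi_s(x,z)\Theta_s(z')\right]f(\d(z-z')),
\]
where the pairing $\int\!\int\cdot\,f(\d(z-z'))$ is understood as in \eqref{BDG}.

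Next, since $\|g'\|_{L^\infty}\le\lip(g)$ and $\|G'\|_{L^\infty}\le\lip(G)$, the conditional Jensen inequality together with Lemma \ref{lem:Du} (applied with, say, $k=2$ and some fixed $\varepsilon\in(0,1)$) give
\[
    \|\Xi_s(x,z)\|_2\lesssim\lip(g)\,\bm{p}_{t-s}(x-z),\quad\|\Theta_s(z')\|_2\lesssim\lip(G)\,\bm{p}_{T-s}(-z'),
\]
with implicit constants depending only on $(t,T,\sigma)$. Cauchy--Schwarz in the expectation then yields
\[
    |\Cov[g(u(t,x)),G(u(T,0))]|\lesssim\lip(g)\lip(G)\int_0^{t\wedge T}\d s\int\!\int\bm{p}_{t-s}(x-z)\bm{p}_{T-s}(-z')\,f(\d(z-z')).
\]

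Finally, I would integrate this bound over $x\in\R^d$ and invoke Fubini: for each fixed $z$, $\int_{\R^d}\bm{p}_{t-s}(x-z)\,\d x=1$, which decouples $z$ from everything except the pairing with $f$; a change of variables $w=z-z'$ then collapses the remaining double integral to $\int_{\R^d}\bm{p}_{T-s}(-z')\,\d z'\cdot f(\R^d)=f(\R^d)$. This yields
\[
    \int_{\R^d}|\Cov[g(u(t,x)),G(u(T,0))]|\,\d x\lesssim\lip(g)\lip(G)f(\R^d)(t\wedge T)<\infty
\]
by hypothesis \eqref{f:finite}, and the short-range dependence assertion is the specialization $G=g$, $T=t$. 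The main obstacle is the careful bookkeeping of the bilinear pairing $\int\!\int\cdot\,f(\d(z-z'))$ when $f$ is only a tempered Borel measure rather than a function, and the justification of the ensuing Fubini interchange; once the Gaussian bound of Lemma \ref{lem:Du} is in place, the integration telescopes because heat kernels integrate to one and $f(\R^d)<\infty$.
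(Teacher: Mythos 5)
Your proposal is correct and follows essentially the same route as the paper's proof: Clark--Ocone representation, Walsh isometry, Cauchy--Schwarz plus conditional Jensen, the heat-kernel bound of Lemma \ref{lem:Du} with $k=2$, and integration against $f(\R^d)<\infty$. The only cosmetic difference is the order of integration at the end (the paper first uses the semigroup property to collapse the two kernels into $\bm{p}_{T+t-2s}*f$ and then integrates in $x$, whereas you integrate in $x$ first), which changes nothing.
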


Before we prove Lemma \ref{lem:Cov:int}, we digress to talk about the role of
Lemma \ref{lem:Cov:int} in our discussion.

In accord with Lemma \ref{lem:Cov:int},
\begin{equation}\label{def:B_t,T}
	\mathbf{B}_{t,T}(g\,,G) := \int_{\R^d}\Cov\left[ g(u(t\,,x)) \,, G(u(T\,,0))\right]\d x
\end{equation}
is a real number for every $t\ge0$ and $g,G\in\lip$.

We have already mentioned the fact that every $u(t)$ is spatially stationary.
It is proved in Chen et al \cite{CKNP} that in fact $u$ is spatially stationary; that is,
the infinite-dimensional process $\{u(t\,,x+y);\, t\ge0,x\in\R^d\}$
has the same law as $\{u(t\,,x);\, t\ge0,x\in\R^d\}$ for every $y\in\R^d$. This extended
form of stationarity readily implies the following:
\begin{compactenum}
	\item  The form $\mathbf{B}_{t,T}:\lip^2\to\R$
		is bilinear for every $t,T\ge0$.
	\item The  form $\mathbf{B}:(t\,,g)\times(T\,,G)\in\R_+^2\times\lip^2\to
		\mathbf{B}_{t,T}(g\,,G)\in\R$
		is symmetric and non-negative definite.
\end{compactenum}
As a consequence, general theory ensures the existence of
a centered Gaussian random field
\begin{equation}\label{gamma}
	\Gamma := \left\{ \Gamma_t(\psi\,,g);\, t\ge0,\,\psi\in L^2(\R^d),\, g\in\lip\right\},
\end{equation}
whose covariance form is given by
\begin{equation*}\label{Cov:gamma}
	\Cov\left[ \Gamma_t(\psi\,,g) ~,~ \Gamma_T(\Psi\,,G)\right] =
	\<\psi\,,\Psi\>_{L^2(\R^d)}\cdot\mathbf{B}_{t,T}(g\,,G),
\end{equation*}
for every $t,T\ge0$, $g,G\in\lip$, and $\psi,\Psi\in L^2(\R^d)$.
The bilinear form
that appeared earlier in Theorem \ref{th:1.1} is defined in terms of
$\mathbf{B}_{t,T}$ as follow: For every $t\ge0$ and $(g\,,G)\in\lip\times\lip$,
\begin{equation}\label{def:B_t}
	\mathbf{B}_t(g\,,G) := \mathbf{B}_{t,t}(g\,,G)=
	\int_{\R^d}\Cov\left[ g(u(t\,,x)) \,, G(u(t\,,0))\right]\d x,
\end{equation}
and is the covariance of the centered Gaussian process $\Gamma_t(\psi\,,\bullet)$
for every fixed $t\ge0$ and $\psi\in L^2(\R^d)$ such that $\|\psi\|_{L^2(\R^d)}=1$.

We can now verify Lemma \ref{lem:Cov:int}.

\begin{proof}[Proof of Lemma \ref{lem:Cov:int}]
	We showed in the course of the proof of Theorem \ref{th:S} that
	for all $t\ge0$ and $x\in\R^d$,	the following Clark--Ocone formula holds a.s.:
	\[
		g(u(t\,,x)) - \E[g(u(t\,,x))] = \int_{(0,t)\times\R^d}
		\E\left( g'(u(t\,,x)) D_{s,z}u(t\,,x) \mid \mathcal{F}_s\right)\eta(\d s\,\d z).
	\]
	Of course, a similar expression holds when we replace
	$(g\,,t\,,x)$ by $(G\,,T\,,0)$ everywhere as well.
	For almost every $s>0$ and $z\in\R^d$, the following random variables
	are well defined:
	\begin{gather*}
		\ell_s(z) := \E\left( g'(u(t\,,x)) D_{s,z}u(t\,,x) \mid \mathcal{F}_s\right)  \quad {\rm and} \quad
		L_s(y) := \E\left( G'(u(T\,,0)) D_{s,y}u(T\,,0) \mid \mathcal{F}_s\right),
	\end{gather*}
	and in fact define $L^2(\Omega)$-continuous  ---
	whence also Lebesgue measurable --- processes indexed by $(s\,,z)$;
	see Chen et al \cite{CKNP}.
	Set  $\mathcal{W}_s(y\,,z):=\E[\ell_s(z) L_s(y)]$.
	It follows from the Walsh isometry for stochastic integrals that
	\begin{equation}\label{Cov:W}
		\Cov\left[ g(u(t\,,x))\,, G(u(T\,,0))\right]  =
		\int_0^{t\wedge T}\d s\int_{\R^d}
		\left(\mathcal{W}_s(y\,,\bullet)*f\right)(y)\,\d y.
	\end{equation}
	The term $t\wedge T$ appears here because of the fact that
	if $F\in\mathbb{D}^{1,2}$ is measurable with respect to $\mathcal{F}_t$
	for some $t\ge0$, then $D_{s,z}F=0$ when $s\ge t$;
	see Nualart \cite{Nualart}.

	Since $g'$ and $G'$ are respectively essentially bounded by
	$\lip(g)$ and $\lip(G)$,
	we first apply  the Cauchy-Schwarz inequality and then the conditional Jensen's inequality, in this order, to find that
	\begin{align*}
		\left| \mathcal{W}_s(y\,,z)\right| &\le\left\| \ell_s(z)\right\|_2
			\left\| L_s(y)\right\|_2\\
		&\le\lip(g)\lip(G)\left\| D_{s,z}u(t\,,x)\right\|_2
			\left\| D_{s,y}u(T\,,0)\right\|_2.
	\end{align*}
	Apply Lemma \ref{lem:Du} with $k=2$ in order to find that
	\[
		\left| \mathcal{W}_s(y\,,z)\right|
		\le K\bm{p}_{t-s}(x-z)\bm{p}_{T-s}(y),
	\]
	where the constant $K$ depends only on $(f\,,g\,,G\,,\sigma\,,t, T)$.
	It follows from this, the semigroup property of the heat kernel,
	and \eqref{Cov:W} that
	\[
		\left| \Cov\left[ g(u(t\,,x))\,, G(u(T\,,0))\right] \right| \le
		K \int_0^{t\wedge T} \left( \bm{p}_{T+t-2s}*f\right)(x)\,\d s,
	\]
	whence
	\[
		\int_{\R^d}\left| \Cov\left[ g(u(t\,,x))\,, G(u(T\,,0))\right] \right|\d x
		\le K(t\wedge T)f(\R^d)<\infty,
	\]
	thanks to \eqref{f:finite}.
\end{proof}

Lemma \ref{lem:Cov:int}, the discussion at the beginning of this section
and \eqref{def:B_t} together imply immediately that
\[
	\lim_{N\to\infty}
	\Var\left( \frac{1}{N^{d/2}}\int_{[0,N]^d} g(u(t\,,x))\,\d x\right)
	=\mathbf{B}_t(g\,,g),
\]
for all $t\ge0$ and $g\in\lip$. The following result generalizes this fact
to an asymptotic behavior of the covariance form of the normalized
occupation field.

\begin{proposition}\label{pr:Cov:asymp}
	For every $t\ge0$, $\psi,\Psi\in L^2(\R^d)$, and $g,G\in\lip$,
	\[
		\lim_{N\to\infty}
		\Cov\left[ N^{d/2} \mathcal{S}_{N,t}(\psi\,,g)\,,
		N^{d/2} \mathcal{S}_{N,t}(\Psi\,,G)\right] =
		\< \psi\,,\Psi \>_{L^2(\R^d)}
		\cdot\mathbf{B}_t(g\,,G).
	\]
\end{proposition}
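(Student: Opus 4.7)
The plan is to first prove the identity for $\psi,\Psi\in L^1(\R^d)\cap L^2(\R^d)$, where the occupation field coincides with the honest Lebesgue integral \eqref{S}, and then bootstrap to arbitrary $L^2$ test functions by density, using the bilinearity of the covariance together with the uniform bound from Theorem \ref{th:S}.

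First, assume $\psi,\Psi\in L^1(\R^d)\cap L^2(\R^d)$. Then Fubini, combined with the stationarity of $x\mapsto u(t\,,x)$, gives
\[
    \Cov\!\left[ N^{d/2}\mathcal{S}_{N,t}(\psi\,,g)\,,N^{d/2}\mathcal{S}_{N,t}(\Psi\,,G)\right]
    = N^{-d}\!\int_{\R^d}\!\int_{\R^d} C(x-y)\,\psi(x/N)\Psi(y/N)\,\d x\,\d y,
\]
where I set $C(z):=\Cov[g(u(t\,,z))\,,G(u(t\,,0))]$. The substitutions $y=Nw$ and $z=x-y$ transform this into
\[
    \int_{\R^d}\!\int_{\R^d} C(z)\,\psi\!\left(w+z/N\right)\Psi(w)\,\d z\,\d w
    = \int_{\R^d} C(z)\, \bigl\langle \tau_{z/N}\psi\,,\Psi\bigr\rangle_{L^2(\R^d)}\d z,
\]
where $(\tau_h\psi)(w):=\psi(w+h)$ and the swap of order is legitimate because $|C|\in L^1(\R^d)$ by Lemma \ref{lem:Cov:int} and $|\langle\tau_{z/N}\psi,\Psi\rangle|\le\|\psi\|_{L^2(\R^d)}\|\Psi\|_{L^2(\R^d)}$.

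Now I would pass to the limit under the outer integral. For each fixed $z\in\R^d$, the continuity of translation in $L^2(\R^d)$ gives $\tau_{z/N}\psi\to\psi$ in $L^2(\R^d)$, so $\langle\tau_{z/N}\psi\,,\Psi\rangle\to\langle\psi\,,\Psi\rangle$ as $N\to\infty$. The integrand is dominated by $|C(z)|\,\|\psi\|_{L^2(\R^d)}\|\Psi\|_{L^2(\R^d)}$, which is integrable by Lemma \ref{lem:Cov:int}. Thus dominated convergence yields
\[
    \lim_{N\to\infty}\Cov\!\left[ N^{d/2}\mathcal{S}_{N,t}(\psi\,,g)\,,N^{d/2}\mathcal{S}_{N,t}(\Psi\,,G)\right]
    = \langle\psi\,,\Psi\rangle_{L^2(\R^d)}\!\int_{\R^d} C(z)\,\d z
    = \langle\psi\,,\Psi\rangle_{L^2(\R^d)}\,\mathbf{B}_t(g\,,G),
\]
by the definition \eqref{def:B_t}.

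Finally, I would extend to general $\psi,\Psi\in L^2(\R^d)$. Pick approximants $\psi^n,\Psi^n\in L^1(\R^d)\cap L^2(\R^d)$ with $\psi^n\to\psi$ and $\Psi^n\to\Psi$ in $L^2(\R^d)$. By bilinearity of $\mathcal{S}_{N,t}$ in its first argument and the Cauchy--Schwarz inequality applied to covariances, the difference
\[
    \Cov\!\left[ N^{d/2}\mathcal{S}_{N,t}(\psi\,,g)\,,N^{d/2}\mathcal{S}_{N,t}(\Psi\,,G)\right]
    -\Cov\!\left[ N^{d/2}\mathcal{S}_{N,t}(\psi^n,g)\,,N^{d/2}\mathcal{S}_{N,t}(\Psi^n,G)\right]
\]
is controlled, via Theorem \ref{th:S} (applied with any fixed $\varepsilon$ and $k=2$), by a constant times $\|\psi-\psi^n\|_{L^2(\R^d)}\|\Psi\|_{L^2(\R^d)} + \|\psi^n\|_{L^2(\R^d)}\|\Psi-\Psi^n\|_{L^2(\R^d)}$, uniformly in $N$. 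A standard interchange-of-limits argument then transfers the identity from $(\psi^n,\Psi^n)$ to $(\psi,\Psi)$. The main subtle point is the limit exchange in the middle step, which hinges exactly on Lemma \ref{lem:Cov:int}; everything else is routine.
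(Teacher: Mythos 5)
Your proof is correct, and it follows the same skeleton as the paper's: reduce to $\psi,\Psi\in L^1(\R^d)\cap L^2(\R^d)$, identify the covariance as $(\psi_N*\tilde\Psi_N*\phi)(0)$ with $\phi(z)=\Cov[g(u(t,z)),G(u(t,0))]$, invoke Lemma \ref{lem:Cov:int} to get $\phi\in L^1(\R^d)$, pass to the limit, and finish by density via Theorem \ref{th:S}. The one place you diverge is the limit step itself: the paper applies Parseval's identity and exploits the continuity and boundedness of $\hat\phi$ at the origin, writing the covariance as $(2\pi N)^{-d}\int\hat\psi(w)\overline{\hat\Psi(w)}\hat\phi(w/N)\,\d w$ and using dominated convergence in Fourier space, whereas you stay in physical space and use strong continuity of translations in $L^2(\R^d)$ together with dominated convergence against $|\phi|$. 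Both routes rest on exactly the same input (integrability of $\phi$), and yours is marginally more elementary in that it avoids Fourier analysis altogether; the paper's version has the small advantage of making the limiting constant $\hat\phi(0)=\mathbf{B}_t(g,G)$ appear immediately. Your density step is the same continuity-in-$(\psi,\Psi)$ argument the paper uses, correctly quantified via Cauchy--Schwarz and the uniform-in-$N$ bound of Theorem \ref{th:S}.
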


\begin{proof}
	First, consider the case that $\psi,\Psi\in {L^1(\R^d)\cap L^2(\R^d)}$. In that case,
	\[
		\Cov\left[ \mathcal{S}_{N,t}(\psi\,,g)\,, \mathcal{S}_{N,t}(\Psi\,,G)\right]
		= \int_{\R^d}\psi_N(x)\,\d x\int_{\R^d}\Psi_N(y)\,\d y\
		\Cov\left[ g(u(t\,,x-y)) \,, G(u(t\,,0))\right].
	\]
	Define
	\[
		\phi(z) := \Cov\left[ g(u(t\,,z)) \,, G(u(t\,,0))\right]\qquad\text{for all $z\in\R^d$},
	\]
	in order to deduce the formula
	\begin{equation}\label{Cov:psi:Psi}
		\Cov\left[ \mathcal{S}_{N,t}(\psi\,,g)\,, \mathcal{S}_{N,t}(\Psi\,,G)\right]
		= \left(\psi_N*\tilde{\Psi}_N*\phi\right)(0).
	\end{equation}
	Lemma \ref{lem:Cov:int} ensures that $\phi\in L^1(\R^d)$; and because
	$g,G\in\lip$ and $u$ is (jointly) continuous in $L^2(\Omega)$ --- see
	Dalang \cite{Dalang1999} --- both $\phi$ and its
	Fourier transform $\hat\phi$ are continuous and bounded.
	Parseval's identity applies and tells us that we can recast \eqref{Cov:psi:Psi}
	as follows:
	\[
		\Cov\left[ \mathcal{S}_{N,t}(\psi\,,g)\,, \mathcal{S}_{N,t}(\Psi\,,G)\right]
		=\frac{1}{(2\pi)^d}\int_{\R^d}\hat{\psi}_N(z)\overline{\hat{\Psi}_N(z)}
		\hat{\phi}(z)\,\d z
		= \frac{1}{(2\pi N)^d}\int_{\R^d}\hat{\psi}(w)\overline{\hat{\Psi}(w)}
		\hat{\phi}(w/N)\,\d w,
	\]
	after a change of variables $[w=Nz]$. Let $N\to\infty$, appeal to the continuity and
	boundedness of $\hat\phi$ as well as the dominated convergence theorem in order
	to find that
	\begin{equation}\label{Cov:N}
		\Cov\left[ N^{d/2} \mathcal{S}_{N,t}(\psi\,,g)\,, N^{d/2} \mathcal{S}_{N,t}(\Psi\,,G)\right]
		\to \frac{\hat{\phi}(0)}{(2\pi)^d}\int_{\R^d}
		\hat{\psi}(w)\overline{\hat{\Psi}(w)}\,\d w
		\qquad\text{as $N\to\infty$}.
	\end{equation}
	This is another way to state Proposition \ref{pr:Cov:asymp} in the special case
	that $\psi,\Psi\in {L^1(\R^d)\cap L^2(\R^d)}$. Now, Theorem \ref{th:S} ensures
	that the quantity on the left-hand side of \eqref{Cov:N} densely defines a
	continuous functional of $(\psi\,,\Psi)\in L^2(\R^d)\times L^2(\R^d)$,
	uniformly in $N>0$. And the right-hand side is also such a continuous functional
	thanks to the Cauchy-Schwarz inequality. Therefore, \eqref{Cov:N} and a
	standard density argument
	together imply the proposition in its full generality.
\end{proof}

\subsection{Comments on non-degeneracy}

The conclusion of Proposition \ref{pr:Cov:asymp} is consistent with the $N^{d/2}$
scaling of the CLT for the occupation field $\mathcal{S}[t]$
in Theorem \ref{th:1.1}. Moreover,
we see that the asymptotic covariance of the occupation field, properly normalized,
is a multiple of the form $\mathbf{B}_t(g\,,G)$. Thus,
it would be nice to know conditions under which the rate $N^{d/2}$ of the convergence in the
CLT of Theorem \ref{th:1.1} is non-degenerate.
We can recast this question by asking the following:
\begin{quotation}
	\emph{Given a number $t\ge0$,
	is $\mathbf{B}_t(g\,,g) > 0$ for some $g\in\lip$?}
\end{quotation}
This is equivalent to asking
whether the limiting Gaussian process $\Gamma_t$ of Proposition \ref{pr:1}
is non degenerate for given value of $t\ge0$. Since $u(0)\equiv1$, $\mathbf{B}_0(g\,,g)=0$ for all $g\in\lip$.
Thus, the question is interesting only when $t>0$. Additionally, the question is interesting
only when $\sigma(1)\neq0$, for $u(t)\equiv1$ otherwise, which renders
$\Gamma_t$ degenerate for all $t\ge0$.

The following lemma gives a partial answer to the mentioned non-degeneracy question.

\begin{proposition}\label{prop:ND}
	Suppose $\sigma$ satisfies one of the following conditions:
	\begin{compactenum}
		\item Either there exists $c_0>0$ such that $\sigma(w)\ge c_0$
			for all $w>0$ or $\sigma(w)\le -c_0$ for all $w>0$; or
		\item $\sigma(0)=0$, and there exists $c_1>0$ such that
			either $\sigma(w)\ge c_1w$ for all $w>0$ or
			$\sigma(w)\le -c_1w$ for all $w>0$.
        \item $\sigma(1)\ne 0$, $\sigma(0)=0$, and either $\sigma(x)$ or $-\sigma(x)$ is nonnegative for all $x>0$.
	\end{compactenum}
	Then, there exists $g\in\lip$ such
	that $\mathbf{B}_t(g\,,g)>0$ for every $t>0$.
	Moreover,  either condition 1 or 2 implies the existence of a constant $c>0$ such that
	$\mathbf{B}_t(g\,,g) \ge c t f(\R^d)>0;$
	and under condition 3, there exist a constant $\delta\in (0\,,t)$ and $R>0$ such that
	\begin{equation}\label{Bf}
		\mathbf{B}_t(g\,,g) \ge 2^{-(d+2)/2} \sigma^2(1)\delta f\left([-R\,,R]^d\right)>0.
	\end{equation}
\end{proposition}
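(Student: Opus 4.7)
I take $g(w) = w$, so $g \in \lip$ with $\lip(g) = 1$. Starting from the mild form \eqref{mild} of $u(t, x) - 1$ and $u(t, 0) - 1$, the Walsh isometry combined with the spatial stationarity of $u$ proved in \cite{CKNP} gives
\[
\Cov[u(t, x), u(t, 0)] = \int_0^t \d s\int_{\R^d} f(\d w)\,\phi(s, w)\,\bm{p}_{2(t-s)}(x - w), \qquad \phi(s, w) := \E[\sigma(u(s, 0))\sigma(u(s, w))].
\]
Integrating over $x\in\R^d$ and using $\int \bm{p}_{2(t-s)}(x - w)\,\d x = 1$ collapses this to the central identity
\[
\mathbf{B}_t(g, g) = \int_0^t \d s\int_{\R^d} \phi(s, w)\, f(\d w),
\]
so the entire problem reduces to producing a pointwise lower bound for $\phi$ under each hypothesis.

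\textbf{Conditions 1 and 2.} Under condition 1, $\sigma$ has a fixed sign with $|\sigma| \ge c_0$ on the relevant range, so $\phi \ge c_0^2$ and the central identity gives $\mathbf{B}_t(g, g) \ge c_0^2\, t\, f(\R^d)$. Under condition 2, the comparison principle (using $\sigma(0) = 0$ and $u(0) \equiv 1$) forces $u \ge 0$ a.s., whence $|\sigma(u(s, z))| \ge c_1 u(s, z)$ and $\phi(s, w) \ge c_1^2 \E[u(s, 0)\, u(s, w)]$. To see $\E[u(s, 0) u(s, w)] \ge 1$, I would re-apply the same Walsh-isometry identity to $\Cov[u(s, 0), u(s, w)]$ itself, obtaining $\int_0^s \d\tau\int f(\d v)\,\phi(\tau, v)\,\bm{p}_{2(s-\tau)}(w-v)$; this is nonnegative because $\sigma \ge 0$ on $[0, \infty)$ forces $\phi \ge 0$. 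Combined with the fact that $\E u(s, \cdot) = 1$ (since $u$ is a space-time Walsh martingale in $t$ starting from $1$), this yields $\phi \ge c_1^2$ and hence $\mathbf{B}_t(g, g) \ge c_1^2\, t\, f(\R^d)$.

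\textbf{Condition 3.} Assume without loss of generality $\sigma \ge 0$ on $[0, \infty)$, so $\sigma(1) > 0$ and $u \ge 0$ a.s. by comparison. Continuity of $\sigma$ at $1$ yields $\varepsilon \in (0, 1)$ with $\sigma^2(w) \ge \sigma^2(1)/2$ for every $w \in [1 - \varepsilon, 1 + \varepsilon]$. Stationarity and $u(0, \cdot) \equiv 1$ give $\|u(s, w) - 1\|_2 = \|u(s, 0) - 1\|_2$ uniformly in $w$, and Dalang's $L^2$-continuity at $t = 0$ lets me pick $\delta \in (0, t)$ so small that $\|u(s, 0) - 1\|_2^2 \le \varepsilon^2(1 - 2^{-d/2})/2$ for all $s \in (0, \delta)$. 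Chebyshev's inequality and a two-event union bound then produce
\[
\P\bigl\{u(s, 0), u(s, w) \in [1 - \varepsilon, 1 + \varepsilon]\bigr\} \ge 2^{-d/2}
\]
uniformly in $(s, w) \in (0, \delta) \times \R^d$, which forces $\phi(s, w) \ge 2^{-(d+2)/2}\sigma^2(1)$ there. Choosing $R > 0$ large enough that $f([-R, R]^d) > 0$ (possible since $f(\R^d) > 0$) and restricting the central identity to $(0, \delta) \times [-R, R]^d$ delivers exactly \eqref{Bf}.

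\textbf{Main obstacle.} The derivation of the central identity and the case 1 bound are essentially immediate. The substantive work is (i) in case 2, establishing $\Cov[u(s, 0), u(s, w)] \ge 0$ via a second application of the Walsh-isometry identity, which requires careful sign tracking and the comparison-based positivity of $u$, and (ii) in case 3, calibrating $\varepsilon$ against $\|u(s, 0) - 1\|_2^2$ in the Chebyshev step so as to recover the precise constant $2^{-(d+2)/2}$ demanded by \eqref{Bf}.
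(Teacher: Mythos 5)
Your proposal is correct, and for conditions 1 and 2 it follows the paper's route essentially verbatim: reduce via the mild form \eqref{mild}, the Walsh isometry and spatial stationarity to $\mathbf{B}_t(g,g)=\int_0^t\d s\int_{\R^d}\phi(s,w)\,f(\d w)$ with $\phi(s,w)=\E[\sigma(u(s,0))\sigma(u(s,w))]$, then lower-bound $\phi$ pointwise. (In case 2 your explicit second application of the isometry identity to show $\Cov[u(s,0),u(s,w)]\ge0$, hence $\E[u(s,0)u(s,w)]\ge1$, actually fills in a step the paper dispatches in one terse sentence.) For condition 3 you take a genuinely different sub-route: the paper uses the joint $L^2$-continuity of $u$ to argue that $h(s,w)=\E[\sigma(u(s,w))\sigma(u(s,0))]$ is continuous with $h(0,\cdot)\equiv\sigma^2(1)$, and extracts $\delta$ and $R$ by compactness so that $h\ge\sigma^2(1)/2$ on $[0,\delta]\times[-R,R]^d$; you instead calibrate an $\varepsilon$-neighbourhood of $1$ against $\|u(s,0)-1\|_2$, apply Chebyshev and a union bound to get $\P\{u(s,0),u(s,w)\in[1-\varepsilon,1+\varepsilon]\}\ge 2^{-d/2}$ uniformly in $w\in\R^d$, and use $\sigma(u)\ge0$ off that event to conclude $\phi\ge 2^{-(d+2)/2}\sigma^2(1)$ on $(0,\delta)\times\R^d$. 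Because your bound is uniform in $w$, you may take $R$ arbitrarily large, so $f([-R,R]^d)>0$ follows from continuity from below of the finite measure $f$; the paper instead proves the stronger fact that $f([0,r]^d)>0$ for \emph{every} $r>0$ via a positive-definiteness argument with indicator convolutions. Both arguments deliver \eqref{Bf} (the paper's constant is in fact $1/2\ge 2^{-(d+2)/2}$, yours matches the stated constant exactly). One shared soft spot, present in the paper's proof as well: under condition 1 the bound $\sigma(u(s,w))\ge c_0$ tacitly uses that $u(s,w)$ takes values where the hypothesis applies (the hypothesis is stated only for $w>0$), which condition 1 alone does not guarantee; this is not a defect of your argument relative to the paper's.
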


\begin{proof}
	Throughout the proof, we consider only the Lipschitz-continuous
	function
	\[
		g(w) =w \quad\text{for all $w\in\R$},
	\]
	and choose and fix an arbitrary number $t>0$. In order to simplify the exposition,
	we work in the case that $f$ is additionally a \emph{function}; the general case
	that $f$ is a measure works in a similar way though the notation is slightly messier.
	Therefore, we omit the proof of the general case.

	If condition 1 of the proposition holds, then \eqref{mild}, the semigroup properties
	of the heat kernel, the basic properties of
	the Walsh stochastic integral, and the spatial stationarity of $u(s)$ together imply that
	\begin{align*}
		&\Cov\left[ g(u(t\,,x))\,,g(u(t\,,0))\right] =
			\E\left[ u(t\,,x)u(t\,,0)\right]-1\\
		&\hskip1in= \int_0^t\d s\int_{\R^d}\d y\int_{\R^d}\d w\
			\bm{p}_{t-s}(x-y+w)\bm{p}_{t-s}(y)
			\E\left[\sigma(u(s\,,w))\sigma(u(s\,,0))\right]f(w)\\
		&\hskip1in\ge c_0^2 \int_0^t\d s\int_{\R^d}\d w\
			\bm{p}_{2(t-s)}(x+w)f(w)
			= c_0^2\int_0^t\left( \bm{p}_{2s}*f\right)(x)\,\d s,
	\end{align*}
	which is strictly positive thanks to \eqref{f:finite}. (The final inequality holds also
	when $f$ is a measure, and for similar reasons.) Because $u(t)$ is continuous in
	$L^2(\Omega)$ (see Dalang \cite{Dalang1999}), the left-most quantity defines
	a continuous function of $x$. Therefore,
	we may integrate $[\d x]$ to see that
	$\mathbf{B}_t(g\,,g)>0$ for the present choice of $g$.
	This proves that condition 1 implies the strict positivity of $\mathbf{B}_t(g\,,g)$.

	Next suppose condition 2 holds. According to the weak comparison theorem of
	Chen and Huang (see \cite[Corollary 1.4]{CH19}),
	$\P\{u(t\,,x)\ge0\}=1$ for every $x\in\R^d$. Thus,
	a similar computation as above yields
	\begin{align*}
		\E\left[ u(t\,,x)u(t\,,0)\right]
			&= 1 + \int_0^t\d s\int_{\R^d}\d y\int_{\R^d}\d w\
			\bm{p}_{t-s}(x-y+w)\bm{p}_{t-s}(y)
			\E\left[\sigma(u(s\,,w))\sigma(u(s\,,0))\right]f(w)\\
		&\ge 1 + c_1^2 \int_0^t\d s\int_{\R^d}\d w\
			\bm{p}_{2(t-s)}(x+w)f(w)\E\left[u(s\,,w)u(s\,,0)\right].
	\end{align*}
	The asserted non-negativity of $u(s)$ implies now that $\E[u(s\,,x)u(s\,,0)]\ge1$
	for every $x\in\R^d$. We enter this bound back into the right-hand side of the above
	in order to see that
	\[
		\Cov\left[ g(u(t\,,x))\,,g(u(t\,,0))\right]
		\ge c_1^2 \int_0^t\d s\int_{\R^d}\d w\
		\bm{p}_{2(t-s)}(x+w)f(w)
		= c_1^2\int_0^t\left( \bm{p}_{2s}*f\right)(x)\,\d s.
	\]
	Now proceed as we did under condition 1 to deduce that $\mathbf{B}_t(g\,,g)>0$ under condition 2.

	Finally, suppose condition 3 holds. Set ${h}(s\,,w):=\E[\sigma(u(s\,,w))\sigma(u(s\,,0))]$
	and observe that $(s\,,w)\mapsto {h}(s\,,w)$ is continuous
	for all $s\ge 0$ and $w\in\R^d$. (This follows from the continuity of $u$ in $L^2(\Omega)$.)
	Because ${h}(0\,,w)\equiv \sigma^2(1) >0$ for all $w\in\R^d$,
	there exist $\delta\in (0\,,t)$ and $R>0$ such that
	\begin{equation}\label{E:inf_g}
		\inf_{(s,w)\in [0,\delta]\times[-R,R]^d} {h}(s,w) \ge \sigma^2(1)/2.
	\end{equation}
	Condition 3 and the fact that $u(t\,,x)\ge 0$ a.s. (see \cite{CH19}),  together imply that
	$g(s\,,w)\ge 0$. It follows from \eqref{E:inf_g} that
	\[
		\Cov\left[ g(u(t\,,x))\,,g(u(t\,,0))\right]
		\ge \frac{\sigma^2(1)}{2} \int_0^\delta \d s\int_{[-R,R]^d}\d w\
		\bm{p}_{2(t-s)}(x+w)f(w).
	\]
	Integrate $[\d x]$ to deduce the inequality in \eqref{Bf}. Thus, it remains to prove
	that $f([-R\,,R]^d)>0$. We will prove the following more general fact:
	\begin{equation}\label{f>0}
		f\left([0\,,r]^d\right)>0\qquad\text{for every $r>0$}.
	\end{equation}

	Define
	\[
		I_r(x) := r^{-d}\bm{1}_{[0,r]^d}(x)\qquad\text{for every $r>0$ and $x\in\R^d$}.
	\]
	As we observed in \cite[(3.17)]{CKNP}, for every $r>0$,
	\begin{equation}\label{III}
		(2r)^{-d}\bm{1}_{[0,r/2]^d} \le I_r*\tilde{I}_r \le r^{-d}\bm{1}_{[0,r]^d}
		\qquad\text{on $\R^d$},
	\end{equation}
	where $\tilde{h}(x) := h(-x)$. Thus,
	\[
		f\left({ x + [0\,,r/2]^d}\right) =\int\bm{1}_{[0,r/2]^d}(w-x)\, f(\d w)
		\le (2r)^d \left(I_r*\tilde{I}_r*f\right)(x),
	\]
	for every $r>0$ and $x\in\R^d$. Since $I_r*\tilde{I}_r*f$ is continuous and positive
	definite, it is maximized at $x=0$. Thus, a second application of \eqref{III} yields
	\[
		\sup_{x\in\R^d}
		f\left({x+ [0\,,r/2]^d}\right) \le (2r)^d \left(I_r*\tilde{I}_r*f\right)(0)
		\le 2^d\int\bm{1}_{[0,r]^d}(w)\,f(\d w)=2^df\left([0\,,r]^d\right).
	\]
	Now suppose to the contrary that $f([0\,,r]^d)=0$ for some $r>0$. If so, then
	the preceding implies that
	\[
		f\left( {j+ [0\,,r/2]^d}\right)=0
		\qquad\text{for all $j\in{\frac{r}{2}}\mathbb{Z}^d$}.
	\]
	Sum the above quantity over all $j\in{\frac{r}{2}}\mathbb{Z}^d$ in order to deduce that $f(\R^d)=0$,
	thus contradicting \eqref{f:finite}. This verifies \eqref{f>0} and completes the
	proof.
\end{proof}

\subsection{Proof of necessity in Theorem \ref{th:main}}\label{subsec:T1:2}
We are ready to prove the easy half of Theorem \ref{th:main}. Namely,
we plan to prove that if $\sigma$ is a non-zero constant --- say $\sigma\equiv c_0\neq0$ ---
and the central limit theorem \eqref{CLT} holds
for every $t>0$ and $g\in\lip$, then $f(\R^d)<\infty$.

Set  $g(w)=w$ for all $w\in\R$ and
$S_N := N^{-d/2}\int_{[0,N]^d} u(1\,,x)\,\d x$
for all $N>0$.
Since $S_N$ has a normal distribution with mean $\E[u(1\,,0)]=1$, \eqref{CLT}
implies that
\begin{equation}\label{Var(X)}
	\lim_{N\to\infty}\Var\left( S_N\right)
	=\Var(X)<\infty.
\end{equation}
Thanks to stationarity, \eqref{mild}, and the $L^2(\Omega)$-isometry properties of Walsh stochastic
integrals,
\begin{align*}
	\Var(S_N)&=\frac{1}{N^d}\int_{[0,N]^d}\d x\int_{[0,N]^d}\d y\
		\Cov[u(1\,,x)\,,u(1\,,y)]
		\\
	&\to \int_{\R^d}\Cov[u(1\,,z)\,,u(1\,,0)]\,\d z
		= c_0^2\int_0^1\d s \int_{\R^d}\d z\
		(\bm{p}_{2s}*f)(z)=c_0^2  f(\R^d),
\end{align*}
as $N\to\infty$. Thus, we can conclude from \eqref{Var(X)},
that $f(\R^d)<\infty$.\qed

\section{Asymptotic independence}

The primary goal of this section is to prove that $\mathcal{S}_{N,t}(\psi\,,g)$
has good ``independence properties,''
as $\psi$ ranges over a sufficiently-large portion of $L^2(\R^d)$.
Before we begin that discussion, let us recall a notion of asymptotic independence
that is relevant to us.

\begin{definition}\label{def:AI}
	Choose and fix an integer $m\ge1$, and let $X=\{X_{j,N};\, 1\le j\le m,\, N>0\}$.
	We say that $X$ has \emph{asymptotic independence} when
	\[
		\lim_{N\to\infty}\left|
		\E\left[\e^{i\sum_{j=1}^m z_j X_{j,N}}\right] - \prod_{j=1}^m\E
		\left[ \e^{iz_j X_{j,N}} \right]
		\right| =0\qquad\text{for every $z_1,\ldots,z_m\in\R$}.
	\]
\end{definition}

Suppose $X$ has asymptotic independence, and (as $N\to\infty$) $X_{j,N}$
converges weakly to a probability measure $\mu_j$ for every $j=1,\ldots,m$.
Then it follows immediately from Definition \ref{def:AI} that
$( X_{1,N}\,,\ldots, X_{m,N}) $ converges in distribution to
$\mu_1\times\cdots\times\mu_m$ as $N\to\infty$. This property is the main
motivation behind the definition of asymptotic independence.

\begin{theorem}\label{th:AI}
	Choose and fix $t>0$ and $g\in\lip$, and suppose that
	$\phi,\psi\in L^2(\R^d)$ both have compact support.
	Then,
	\begin{align}\label{eq:AI}
	\begin{aligned}
		&\left| \Cov\left[ \exp\left(i N^{d/2}\mathcal{S}_{N,t}(\psi,g) \right)
		~,~ \exp\left(-iN^{d/2}\mathcal{S}_{N,t}(\phi,g)\right) \right]\right|  \\
		&\hskip2in\lesssim\int_0^t\d s\int_{\R^d}\d \eta \
			\left(\bm{p}_{2s}*f\right)(\eta)\left(|\phi|*\tilde{|\psi|}\right)\left(\frac{\eta}{N}\right),
	\end{aligned}
	\end{align}
	uniformly for all $N>0$, where the implied constant does not depend on $(\psi\,,\phi\,,N)$.
	Consequently, if the intersection of the supports of $\phi$ and $\psi$
	is Lebesgue-null, then
	$N^{d/2}\mathcal{S}_{N,t}(\psi\,,g)$
	and $N^{d/2}\mathcal{S}_{N,t}(\phi\,,g)$ are asymptotically independent
	as $N\to\infty$.
\end{theorem}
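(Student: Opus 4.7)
The plan is to bound $\Cov[e^{iF},e^{-iG}]$, with $F := N^{d/2}\mathcal{S}_{N,t}(\psi,g)$ and $G := N^{d/2}\mathcal{S}_{N,t}(\phi,g)$, by combining the Clark--Ocone representation of $e^{\pm iF}$ with the Walsh isometry, exploiting the crucial fact that $|e^{iF}| = |e^{-iG}| = 1$ so that moment control on $F$ or $G$ themselves is never required. Since $\psi,\phi\in L^2(\R^d)$ have compact support, they also belong to $L^1(\R^d)\cap L^2(\R^d)$, so Lemma~\ref{lem:DS} places $F,G\in\mathbb{D}^{1,2}$.

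By the chain rule $D_{s,z}e^{iF} = ie^{iF}D_{s,z}F$ and Clark--Ocone (as used in the proof of Theorem~\ref{th:S}),
\[
e^{iF} - \E[e^{iF}] = i\int_{(0,t)\times\R^d}\E\!\left(e^{iF} D_{s,z}F \mid \mathcal{F}_s\right) \eta(\d s\,\d z),
\]
with an analogous representation (coefficient $-i$) for $e^{-iG}$. Applying the Walsh isometry to the product then yields
\[
\Cov\!\left[e^{iF},e^{-iG}\right] = \int_0^t \d s\iint \E\!\left[\E(e^{iF}D_{s,z}F\mid\mathcal{F}_s)\E(e^{-iG}D_{s,z'}G\mid\mathcal{F}_s)\right] f(z-z')\,\d z\,\d z'.
\]
The pointwise estimate $|e^{\pm iF}| = 1$, combined with conditional Jensen and Cauchy--Schwarz, dominates the integrand by $\|D_{s,z}F\|_2\|D_{s,z'}G\|_2$; by Lemma~\ref{lem:DS} followed by Lemma~\ref{lem:Du} at $k=2$,
\[
\|D_{s,z}F\|_2 \le C N^{d/2}\lip(g)\,(|\psi_N|*\bm{p}_{t-s})(z),
\]
with the analogous bound for $G$ in terms of $|\phi_N|$, where $C$ depends only on $(t,f,\sigma)$.

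Inserting these estimates and applying the heat semigroup identity $\bm{p}_{t-s}*\bm{p}_{t-s}=\bm{p}_{2(t-s)}$ together with symmetry of $f$ reduces the bound to a constant multiple of $N^d\int_0^t\d s\,(|\psi_N|*\tilde{|\phi_N|}*\bm{p}_{2(t-s)}*f)(0)$. The scaling identity $(|\phi_N|*\tilde{|\psi_N|})(\zeta) = N^{-d}(|\phi|*\tilde{|\psi|})(\zeta/N)$ cancels the $N^d$ factor, and the substitution $s\mapsto t-s$ yields exactly \eqref{eq:AI}. For the asymptotic independence conclusion, note that compact support of $\phi,\psi$ makes $|\phi|*\tilde{|\psi|}$ continuous and bounded, with value at the origin equal to $\int |\phi(v)||\psi(v)|\,\d v$, which vanishes exactly when the supports meet in a Lebesgue-null set. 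Hence $(|\phi|*\tilde{|\psi|})(\eta/N)\to 0$ pointwise, and because $\int_0^t\d s\int (\bm{p}_{2s}*f)(\eta)\,\d\eta = tf(\R^d)<\infty$ by \eqref{f:finite}, dominated convergence forces the right-hand side of \eqref{eq:AI} to zero. Applying the estimate with $(\psi,\phi)$ replaced by $(z_1\psi,-z_2\phi)$, whose supports are unchanged for $z_1,z_2\ne 0$, and invoking the linearity of $\mathcal{S}_{N,t}(\cdot,g)$ in its first slot supplies Definition~\ref{def:AI}.

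The only delicate point is the insistence on using $|e^{i\cdot}|\le 1$ at the conditional Cauchy--Schwarz stage rather than any estimate involving $\|F\|_k$ or $\|G\|_k$: Theorem~\ref{th:S} shows these latter quantities are $\Theta(1)$ in $N$, which would be useless for asymptotic independence. All the $N$-decay must instead come from the factor $(|\phi|*\tilde{|\psi|})(\eta/N)$, whose vanishing at the origin precisely encodes geometric disjointness of the supports.
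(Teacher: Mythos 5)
Your proposal is correct and follows essentially the same route as the paper's proof: Clark--Ocone plus the Malliavin chain rule applied to $\e^{\pm i(\cdot)}$, the Walsh isometry, the bound $|\e^{iF}|=1$ combined with conditional Jensen and Cauchy--Schwarz, Lemma \ref{lem:Du} at $k=2$, the heat semigroup identity and the scaling of $\psi_N$, and finally dominated convergence (using $tf(\R^d)<\infty$ and the $L^2$-continuity of translation) together with the substitution $(\psi,\phi)\mapsto(z_1\psi,-z_2\phi)$ for the asymptotic-independence conclusion. No gaps; the closing observation about why all the $N$-decay must come from $\left(|\phi|*\tilde{|\psi|}\right)(\eta/N)$ rather than from moment bounds on $F,G$ is exactly the point of the argument.
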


\begin{proof}
	In order to simplify the typesetting define
	\[
		\Phi := \exp\left( i N^{d/2}\mathcal{S}_{N,t}(\psi\,,g)\right),\quad
		\Psi := \exp\left( -iN^{d/2} \mathcal{S}_{N,t}(\phi\,,g)\right).
	\]
	According to Lemma \ref{lem:DS}, the Clark--Ocone formula (see Chen et al
	\cite{CKNP}) and the chain rule of Malliavin calculus (see Nualart \cite{Nualart}),
	$\Phi,\Psi\in\mathbb{D}^{1,k}$ for every $k\ge2$,
	\begin{align*}
		\Phi-\E(\Phi) &=iN^{d/2}\int_{(0,t)\times\R^d}
			\E\left(\left.\Phi\int_{\R^d}g'(u(t\,,x)) D_{s,z}u(t\,,x)\psi_N(x)
			\,\d x\ \right|\,\mathcal{F}_s\right)\eta(\d s\,\d z),
			\text{ and}\\
		\Psi-\E(\Psi) &=-iN^{d/2}\int_{(0,t)\times\R^d}
			\E\left(\left.\Psi\int_{\R^d}g'(u(t\,,x)) D_{s,z}u(t\,,x)\phi_N(x)
			\,\d x\ \right|\,\mathcal{F}_s\right)\eta(\d s\,\d z),
	\end{align*}
	almost surely. In order to further simplify the exposition and the notation, suppose for now
	that the correlation $f$ is a function. In that case,  Walsh isometry for
	stochastic integrals ensures that
	\begin{align*}
		&\Cov(\Phi\,,\Psi) = \E\left( \left[ \Phi-\E(\Phi)\right]\cdot
			\overline{\left[
			\Psi -\E(\Psi) \right]}\right) =
			-N^{d}\E\int_0^t\d s\int_{\R^d}\d y\int_{\R^d}\d z\
			f(y-z)\\
		&\times\E\left(\left.\Phi\int_{\R^d}g'(u(t\,,a)) D_{s,y}u(t\,,a)\psi_N(a)
			\,\d a\ \right|\,\mathcal{F}_s\right)
			\E\left(\left.\bar\Psi\int_{\R^d}g'(u(t\,,b)) D_{s,z}u(t\,,b)\phi_N(b)
			\,\d b\ \right|\,\mathcal{F}_s\right).
	\end{align*}
	In particular, we may use the Cauchy-Schwarz inequality, the conditional Jensen's inequality, and the fact that
	$|\Psi|\vee|\Phi|\le1$ in order to see that
	\[
		\left|\Cov(\Phi\,,\Psi)\right|
		\le N^d[\lip(g)]^2\int_0^t\d s\int_{\R^d}\d y\int_{\R^d}\d z\
		f(y-z) \mathcal{A}\mathcal{B},
	\]
	where $\mathcal{A} := \int_{\R^d}\| D_{s,y}u(t\,,a)\|_2
	|\psi_N(a) |\,\d a$ and $\mathcal{B} := \int_{\R^d}
	\| D_{s,z}u(t\,,b)\|_2 |\phi_N(b)|\,\d b.$
	In accord with Lemma \ref{lem:Du},
	\[
		\mathcal{A} \lesssim \left( \bm{p}_{t-s}*|\psi_N|\right)(y)
		\quad\text{and}\quad
		\mathcal{B} \lesssim \left( \bm{p}_{t-s}*|\phi_N|\right)(z),
	\]
	for almost all $0<s<t$ and $y,z\in\R^d$. We emphasize that the implied
	constants do not depend on any of the interesting variables here (see
	Lemma \ref{lem:Du} for numerical bounds on these constants.)
	Consequently,
	\begin{align*}
		\left|\Cov(\Phi\,,\Psi)\right| \lesssim
		N^d\int_0^t\left( \bm{p}_{2s}*|\psi_N| * |\tilde{\phi}_N|*f\right)(0)\,\d s.
	\end{align*}
	Once again, the implied constants are harmless. Even though we have obtained this
	inequality under the additional hypothesis that $f$ is a function, it is possible to check
	that the very same inequality holds more generally when $f$ is a measure.

	Now we unscramble the convolutions in order to see that
	\begin{align*}
		\left|\Cov(\Phi\,,\Psi)\right| &\lesssim N^d\int_0^t\d s\int_{\R^d}\d y\int_{\R^d}\d z
			\int_{\R^d}\d w\
			\left(\bm{p}_{2s}*f\right)(y-w)|\phi_N(y)||\psi_N(w)|\\
		& = \int_0^t\d s\int_{\R^d}\d \eta \
			\left(\bm{p}_{2s}*f\right)(\eta)\int_{\R^d}\d w\ N^{-d}|\phi(y/N)|\left|\psi\left(\frac{y}{N} -
			\frac{\eta}{N}\right)\right|,
	\end{align*}
	which yields \eqref{eq:AI}.

	In order to prove that $N^{d/2}\mathcal{S}_{N,t}(\psi\,,g)$
	and $N^{d/2}\mathcal{S}_{N,t}(\phi\,,g)$ are asymptotically independent
	as $N\to\infty$ under the condition that the intersection of the supports of $\phi$ and $\psi$
	is Lebesgue-null,  we can replace $\phi$ and $\psi$
	respectively by $a\phi$ and $b\psi$ in \eqref{eq:AI}, where $a,b\in\R$ are arbitrary numbers.
	Thus, it suffices to show that
	\begin{equation} \label{AIvanish}
		\lim_{N\to\infty}\int_0^t\d s\int_{\R^d}\d \eta \
		\left(\bm{p}_{2s}*f\right)(\eta)\,\left(|\phi|*\tilde{|\psi|}\right)(\eta/N) = 0.
	\end{equation}
	By the Cauchy-Schwarz inequality,
	\begin{equation}\label{L_N}
		\sup_{N > 0}\sup_{\eta \in \R^d}
		 \left|\left(|\phi|*\tilde{|\psi|}\right)(\eta/N)\right|\le
		 \|\phi\|_{L^2(\R^d)}\|\psi\|_{L^2(\R^d)}.
	\end{equation}
	It is well known that continuous functions of compact support are dense in $L^2(\R^d)$.
	From this it follows that $\lim_{\|h\|\to 0}\int_{\R^d}|\psi(w + h) - \psi(w)|^2\d w = 0.$
	Therefore, the Cauchy--Schwarz inequality implies that
	\begin{align}\label{eq:lim}
		\lim_{N \to \infty}\left(|\phi|*\tilde{|\psi|}\right)(\eta/N)
		= \int_{\R^d} \left|\phi\left(w \right)\right|\left|\psi\left(w\right)\right|\d w,
	\end{align}
	which vanishes since the intersection of the supports of $\phi$ and $\psi$ is assumed
	to have zero Lebesgue measure. Since $f(\R^d) < \infty$ --- see \eqref{f:finite} ---
	we can deduce \eqref{AIvanish} by combining \eqref{L_N} and \eqref{eq:lim}, using
	the dominated convergence theorem. This completes the proof.
\end{proof}

As was  pointed out earlier, Theorem \ref{th:AI} implies that if $\phi,\psi\in L^2(\R^d)$
have essentially-disjoint compact supports, then for all $a,b\in\R$,
$t\ge0$, and $g\in\lip$,
\[
	\left| \E\exp\left( \e^{ia N^{d/2}\mathcal{S}_{N,t}(\psi,g) +
	ib N^{d/2} \mathcal{S}_{N,t}(\phi,g)}\right) -
	\E\exp\left( \e^{ia N^{d/2}\mathcal{S}_{N,t}(\psi,g)}\right)\E\left(
	\e^{ib N^{d/2} \mathcal{S}_{N,t}(\phi,g)}\right)\right|\to0
\]
as $N\to\infty$. Equivalently, $N^{d/2}\mathcal{S}_{N,t}(\psi\,,g)$ and
$N^{(d/2}\mathcal{S}_{N,t}(\phi\,,g)$ are asymptotically independent as $N\to\infty$.
Now we bootstrap Theorem \ref{th:AI} from a statement about
two functions (namely, $\phi$ and $\psi$) to one about any number of
functions in $L^2(\R^d)$ that have pairwise disjoint compact supports. In any case,
the end result is the following corollary to Theorem \ref{th:AI}. For simplicity,
let $\text{\rm supp}[h]$ denote the support of the function $h:\R^d\to\R$, and define
${\rm Leb}$ to be the Lebesgue measure on $\R^d$.

\begin{corollary}\label{co:AI}
	Choose and fix $t\ge0$ and $g\in\lip$, and let $m\ge2$
	be an integer. Choose $\psi_1,\ldots,\psi_m\in L^2_c(\R^d)$.
	Then,
	for every $a_1,\ldots,a_m\in\R$,
	\begin{align}\label{eq:co:AI}
	\begin{aligned}
		& \left| \E\left[ \e^{i\sum_{j=1}^m a_jN^{d/2}\mathcal{S}_{N,t}(\psi_j\,,g)}\right] -
		\prod_{j=1}^m\E\left[ \e^{ia_jN^{d/2}\mathcal{S}_{N,t}(\psi_j\,,g)}\right]\right| \\
		&\qquad \qquad\lesssim \sum_{k=2}^m\sum_{j = 1}^{k-1}|a_ja_k|\int_0^t\d s\int_{\R^d}\d \eta \
			\left(\bm{p}_{2s}*f\right)(\eta)\left(|\psi_{j}|*|\tilde{\psi}_k|\right)\left(\frac{\eta}{N}\right),
		\end{aligned}
	\end{align}
	uniformly for all $N>0$, and the implied constant is equal to the  implied constant of \eqref{eq:AI}
	and hence does not depend on $(m\,,a_1\,,\ldots,a_m\,,\psi_1\,,\ldots,\psi_m\,,N)$.
    Moreover, suppose that $\psi_1,\ldots,\psi_m\in L^2_c(\R^d)$
	satisfy the following condition:
	\begin{equation}\label{delta:S}
		\text{\rm Leb}\left( \text{\rm supp}[\psi_j]\cap \text{\rm supp}[\psi_k]\right)
		=0\qquad\text{for all $1\le j\neq k\le m$}.
	\end{equation}
    Then, $N^{d/2}\mathcal{S}_{N,t}(\psi_j\,,g), j = 1, \ldots, m$  are asymptotically independent
    as $N\to\infty$.
\end{corollary}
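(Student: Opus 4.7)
The plan is to reduce the $m$-variable estimate to the two-variable bound of Theorem \ref{th:AI} by means of a telescoping identity, exploiting the linearity of $\mathcal{S}_{N,t}$ in its first argument. Set $Y_j := \exp\bigl(i a_j N^{d/2}\mathcal{S}_{N,t}(\psi_j,g)\bigr)$, so that $|Y_j|=1$ and $|\E Y_j|\le 1$. The standard telescoping identity
\[
	\prod_{j=1}^m Y_j - \prod_{j=1}^m \E Y_j = \sum_{k=1}^m \Bigl(\prod_{j<k} Y_j\Bigr)\bigl(Y_k-\E Y_k\bigr)\Bigl(\prod_{j>k}\E Y_j\Bigr),
\]
combined with $\prod_{j>k}|\E Y_j|\le 1$ and the observation that the $k=1$ summand has zero expectation, gives
\[
	\Bigl|\E\prod_{j=1}^m Y_j - \prod_{j=1}^m \E Y_j\Bigr| \le \sum_{k=2}^m \Bigl|\E\Bigl[\prod_{j<k}Y_j\,(Y_k-\E Y_k)\Bigr]\Bigr|.
\]
Each right-hand summand is precisely $|\Cov(\prod_{j<k}Y_j,\bar Y_k)|$ in the complex-covariance convention adopted in the proof of Theorem \ref{th:AI}.

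By the linearity of $\psi\mapsto\mathcal{S}_{N,t}(\psi,g)$, we have $\prod_{j<k}Y_j = \exp\bigl(iN^{d/2}\mathcal{S}_{N,t}(\xi_k,g)\bigr)$ and $\bar Y_k = \exp\bigl(-iN^{d/2}\mathcal{S}_{N,t}(a_k\psi_k,g)\bigr)$, where $\xi_k := \sum_{j<k}a_j\psi_j \in L^2_c(\R^d)$ (finite linear combinations of $L^2_c$ functions remain compactly supported in $L^2$). I would then apply Theorem \ref{th:AI} with $\psi\leftarrow\xi_k$ and $\phi\leftarrow a_k\psi_k$, and bound
\[
	|a_k\psi_k|*\Bigl|\sum_{j<k}a_j\tilde\psi_j\Bigr| \le \sum_{j<k}|a_ja_k|\,|\psi_k|*|\tilde\psi_j|
\]
via the triangle inequality. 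Finally, the symmetry of $\bm{p}_{2s}*f$ (valid because $f$ is a symmetric measure and $\bm{p}_{2s}$ is even) allows replacing $|\psi_k|*|\tilde\psi_j|$ by $|\psi_j|*|\tilde\psi_k|$ inside the outer integral via the change of variables $\eta\mapsto -\eta$. Summing over $k=2,\ldots,m$ produces \eqref{eq:co:AI} with the same implied constant as in \eqref{eq:AI}.

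For the asymptotic-independence conclusion, it suffices, by Definition \ref{def:AI}, to show that the right-hand side of \eqref{eq:co:AI} vanishes as $N\to\infty$ for every choice of real $a_1,\ldots,a_m$. This mirrors the end of the proof of Theorem \ref{th:AI}: Cauchy--Schwarz yields the $N$-uniform domination $(|\psi_j|*|\tilde\psi_k|)(\eta/N)\le \|\psi_j\|_{L^2(\R^d)}\|\psi_k\|_{L^2(\R^d)}$, while $L^2$-continuity of translations together with \eqref{delta:S} gives, exactly as in \eqref{eq:lim},
\[
	\lim_{N\to\infty}(|\psi_j|*|\tilde\psi_k|)(\eta/N) = \int_{\R^d}|\psi_j(w)||\psi_k(w)|\,\d w = 0.
\]
Since $\int_0^t\d s\int_{\R^d}(\bm{p}_{2s}*f)(\eta)\,\d\eta = t\,f(\R^d)<\infty$ by \eqref{f:finite}, dominated convergence handles each of the finitely many pairs $(j,k)$ with $j<k$, completing the proof. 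I do not anticipate any substantive obstacle; the only book-keeping points are the sign convention in Theorem \ref{th:AI} (which is why $\prod_{j<k}Y_j$ is paired with $\bar Y_k$ rather than $Y_k$) and the mild symmetrization needed to convert $|\psi_k|*|\tilde\psi_j|$ into $|\psi_j|*|\tilde\psi_k|$ under the outer kernel.
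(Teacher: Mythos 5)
Your proposal is correct and follows essentially the same route as the paper: the same telescoping decomposition, the same application of Theorem \ref{th:AI} to the pair $\bigl(\sum_{j<k}a_j\psi_j,\,a_k\psi_k\bigr)$ via linearity of $\psi\mapsto\mathcal{S}_{N,t}(\psi,g)$, the same triangle-inequality bound on the convolution, and the same dominated-convergence argument for asymptotic independence. Your explicit symmetrization step (using evenness of $\bm{p}_{2s}*f$ to swap $|\psi_k|*|\tilde\psi_j|$ for $|\psi_j|*|\tilde\psi_k|$) is a detail the paper passes over silently, and it is handled correctly.
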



\begin{proof}
	Let $\mathcal{Y}_j := N^{d/2}a_j \mathcal{S}_{N,t}(\psi_j\,,g) = N^{d/2}
	\mathcal{S}_{N,t}(a_j\psi_j\,,g)$ for $j=1,\ldots,m$.
	Define $\mathcal{S}_k := \sum_{j=1}^k\mathcal{Y}_j$,
	$\Psi_k := \sum_{j=1}^ka_j\psi_j$ for every $k=1,\ldots,m$.
	Observe that $\mathcal{S}_k=\mathcal{S}_{k-1}+\mathcal{Y}_k$,
	$\mathcal{S}_{k-1}=N^{d/2}\mathcal{S}_{N,t}(\Psi_{k-1}\,,g)$, and
	$\Psi_k,\psi_{k+1},\ldots,\psi_m\in L^2(\R^d)$
	have compact supports that are pairwise disjoint (for all  {$k=2,\ldots,m$}) { if \eqref{delta:S} holds}.
	In particular, if we set $[m]:=\{1\,,\ldots,m\}$, then we may deduce
	from Theorem \ref{th:AI} the existence of a real number $L>0$
	--- not depending on $(\psi_1\,,\ldots,\psi_k\,,N)$ --- such that
	\begin{align*}
		& \left| \E\left[\e^{i\mathcal{S}_k}\right]\prod_{\ell\in[m]\setminus[k]}
			\E\left[\e^{i\mathcal{Y}_\ell}\right]- \E\left[\e^{i\mathcal{S}_{k-1}}\right]
			\prod_{\ell=k}^m\E\left[\e^{i\mathcal{Y}_\ell}\right]\right|
			\le
			\left| \E\left[\e^{i\mathcal{S}_k}\right] - \E\left[\e^{i\mathcal{S}_{k-1}}\right]
			\E\left[\e^{i\mathcal{Y}_k}\right]\right| \\
		&\hskip2in\le L|a_k| \int_0^t\d s\int_{\R^d}\d \eta \
			\left(\bm{p}_{2s}*f\right)(\eta)\left(|\Psi_{k -1}|*|\tilde{\psi}_k|\right)\left(\frac{\eta}{N}\right),
	\end{align*}
	uniformly for all integers $k=2,\ldots,m$.
	Next, we may write things as a telescoping sum as follows:
	\begin{align*}
		\left| \E\left[\e^{i\mathcal{S}_m}\right] - \prod_{j=1}^m
			\E\left[\e^{i\mathcal{Y}_j}\right]\right|
			&=\left|\sum_{k=2}^m\left\{
			\E\left[\e^{i\mathcal{S}_k}\right]\prod_{\ell\in[m]\setminus[k]}
			\E\left[\e^{i\mathcal{Y}_\ell}\right]- \E\left[\e^{i\mathcal{S}_{k-1}}\right]
			\prod_{\ell=k}^m\E\left[\e^{i\mathcal{Y}_\ell}\right]\right\}\right|\\
		&\le L\sum_{k=2}^m|a_k|\int_0^t\d s\int_{\R^d}\d \eta \
			\left(\bm{p}_{2s}*f\right)(\eta)\left(|\Psi_{k -1}|*|\tilde{\psi}_k|\right)\left(\frac{\eta}{N}\right), \\
        & \leq L\sum_{k=2}^m\sum_{j = 1}^{k-1}|a_ja_k|\int_0^t\d s\int_{\R^d}\d \eta \
			\left(\bm{p}_{2s}*f\right)(\eta)\left(|\psi_{j}|*|\tilde{\psi}_k|\right)\left(\frac{\eta}{N}\right),
	\end{align*}
     which proves \eqref{eq:co:AI}.

    The asymptotical independence property of the random variables $N^{d/2}\mathcal{S}_{N,t}(\psi_j\,,g), j = 1, \ldots, m$
    as $N\to\infty$ under condition \eqref{delta:S} follows \textcolor{red}{from} the same arguments as in the proof of Theorem \ref{th:AI}.
\end{proof}

\begin{remark}
	The last portion of the proof used a method that involves telescoping sums.
	That method was introduced first in 1959
	by Volkonskii and Rozanov \cite{VR} in order to establish
	asymptotic independence for strongly-mixing sequences.
	For a modern, comprehensive, exposition see Bradley \cite[Corollary 1.13, p.\ 32]{Bradley}.
\end{remark}

\section{Proof of Theorems \ref{th:1.1} and \ref{th:1.2}}
\subsection{Convergence in a special case}

Choose and fix some $t\ge0$, $g\in\lip$, $a \in \R$ and $y', y\in\R^d$ such that
$ y'_j \leq y_j $ for all $j=1, \ldots, d$.
 Let
\begin{equation}\label{Q(r)}
	Q(r) =Q(a,r\,;y',y)  := [a\,,a+ r(y_1-y'_1)]\times [y'_2\,,y_2] \times \cdots\times[y'_d\,,y_d]
	\qquad\text{for every $r\ge0$}.
\end{equation}
For every $N>0$, let us define a one-parameter stochastic process
$X_N:=\{X_N(r)\}_{r\ge0}$ as follows:
\[
	X_N(r) := N^{d/2} \mathcal{S}_{N,t}\left(\bm{1}_{Q(r)}\,,g\right)
	\qquad\text{for every $r\ge0$ and $N>0$}.
\]
We define also a one-parameter process $X$ via
\[
	X(r) := \Gamma_t\left(\bm{1}_{Q(r)}\,,g\right)\qquad\text{for every
	$r\ge0$}.
\]

The main goal of this section is to prove the following special case of
Theorems \ref{th:1.1} and \ref{th:1.2}:
\begin{equation}\label{X(1)}
	X_N(1) \xrightarrow{\text{\rm d}\,} X(1)
	\quad\text{as $N\to\infty$.}
\end{equation}
This is a very special case of Lemma \ref{lem:2}, but we will see later on that
Lemma \ref{lem:2} is also a consequence of \eqref{X(1)}. Unfortunately, we do not know
of a direct proof of \eqref{X(1)} that is simple to present. Fortunately,  it is not so hard
to prove the following more general result, as it rests on facts from the
general theory of L\'evy processes.
Here and throughout the symbol $\xrightarrow{\text{\rm fdd}}$
refers to weak convergence of finite-dimensional distributions.

\begin{proposition}\label{pr:X}
	$X_N \xrightarrow{\text{\rm fdd}}X$ as $N\to\infty$.
\end{proposition}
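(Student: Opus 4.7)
The plan is to identify the unique possible weak limit of $\{X_N\}_{N>0}$ in $C([0\,,R];\R)$ (for any $R>0$) by combining the moment bound from Theorem \ref{th:S} with the asymptotic independence furnished by Corollary \ref{co:AI} and Paul L\'evy's classical characterization of Brownian motion as the only mean-zero, continuous L\'evy process; convergence of finite-dimensional distributions is then automatic.

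For tightness, I would apply Theorem \ref{th:S} to the linear functional $\psi\mapsto\mathcal{S}_{N,t}(\psi\,,g)$ at $\psi=\bm{1}_{Q(r_2)\setminus Q(r_1)}$ to obtain
\[
	\|X_N(r_2)-X_N(r_1)\|_k \lesssim \sqrt{(r_2-r_1)(y_1-y_1')\prod_{j=2}^d(y_j-y_j')}
\]
uniformly in $N>0$ and in $0\le r_1<r_2\le R$, for every $k\ge 2$; taking $k$ large and invoking Kolmogorov's criterion produces tightness of $\{X_N\}_{N>0}$ in $C([0\,,R])$. Let $X^*$ denote any subsequential weak limit: its paths are continuous by construction, and $\E[X^*(r)]=0$ follows from $\E[X_N(r)]=0$ together with uniform integrability from the $L^2$-bound. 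Because $Q(r_2)\setminus Q(r_1)$ is a first-coordinate translate of the reference strip $[0\,,(r_2-r_1)(y_1-y_1')]\times[y_2'\,,y_2]\times\cdots\times[y_d'\,,y_d]$, the spatial stationarity of $u(t)$ forces $X_N(r_2)-X_N(r_1)$ to have a law depending only on $r_2-r_1$, and this property passes to $X^*$. For independence: given $0\le r_0<r_1<\cdots<r_m$, the indicators $\bm{1}_{Q(r_i)\setminus Q(r_{i-1})}$ lie in $L^2_c(\R^d)$ with pairwise Lebesgue-null overlaps, so Corollary \ref{co:AI} yields asymptotic independence of the increments $X_N(r_i)-X_N(r_{i-1})$, which I would upgrade to genuine independence of the increments of $X^*$ by passage to the limit in joint characteristic functions. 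L\'evy's characterization then forces $X^*$ to be a deterministic multiple of a standard Brownian motion.

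The scaling is pinned down by Proposition \ref{pr:Cov:asymp}:
\[
	\Var(X^*(r)) = \lim_{N\to\infty}\Var(X_N(r)) = \mathbf{B}_t(g\,,g)\,\|\bm{1}_{Q(r)}\|_{L^2(\R^d)}^2 = \mathbf{B}_t(g\,,g)\cdot r(y_1-y_1')\prod_{j=2}^d(y_j-y_j'),
\]
which coincides with $\Var(X(r)) = \Var(\Gamma_t(\bm{1}_{Q(r)}\,,g))$. Hence every subsequential limit agrees in law with $X$, so $X_N$ converges weakly to $X$ in $C([0\,,R])$ and in particular $X_N\xrightarrow{\text{\rm fdd}}X$. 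The main obstacle is the passage from the asymptotic independence provided by Corollary \ref{co:AI} to the genuine (not merely approximate) independence of the increments of $X^*$; this will require a careful telescoping manipulation of joint characteristic functions of the flavor noted in the remark following Corollary \ref{co:AI}, and it is essential, since L\'evy's characterization breaks down without exact stationary independent increments.
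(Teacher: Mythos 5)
Your proposal is correct and follows essentially the same route as the paper: tightness in $C[0\,,1]$ via the moment bound of Theorem \ref{th:S}, identification of any subsequential limit as a continuous, mean-zero process with stationary increments (from spatial stationarity of $u(t)$) and independent increments (from Corollary \ref{co:AI}, upgraded in the limit of joint characteristic functions), then L\'evy's characterization plus the variance computation of Proposition \ref{pr:Cov:asymp}. The ``main obstacle'' you flag is already resolved by Corollary \ref{co:AI} together with joint weak convergence along the subsequence, which is exactly how the paper's Claim A proceeds.
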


In the first step of the proof of Proposition \ref{pr:X} we identify
the limiting object as a particularly-simple L\'evy process.

\begin{lemma}\label{lem:X=BM}
	$X$ is a one-dimensional Brownian motion with variance
	$\mathbf{B}_t(g\,,g)\prod_{j=1}^d (y_j-y'_j)$.
\end{lemma}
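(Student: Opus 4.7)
The plan is to read off the result directly from the covariance structure of $\Gamma_t$ supplied by Theorem~\ref{th:1.1}, since $X$ is by construction a centered Gaussian process indexed by $r \ge 0$ and it only remains to identify its covariance with that of a Brownian motion (up to the appropriate time change) and then invoke Kolmogorov's continuity theorem to select a continuous modification.

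First I would observe that $X(0) = \Gamma_t(\mathbf{1}_{Q(0)}, g) = \Gamma_t(0, g) = 0$ a.s., since $Q(0)$ has Lebesgue measure zero and $\Gamma_t$ is a linear random functional of its $L^2$ argument. Next, the centered Gaussianity of $X$ is immediate from Theorem~\ref{th:1.1}, together with the bilinearity noted in the closing Remark; in particular, every finite linear combination of the $X(r_j)$ is a centered Gaussian random variable. So it suffices to match the two-point covariance.

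For the covariance computation, fix $0 \le s \le r$. By \eqref{Cov:Gamma} with $\psi = \mathbf{1}_{Q(s)}$, $\Psi = \mathbf{1}_{Q(r)}$, $G = g$, I would compute
\[
	\Cov\left[ X(s)\,, X(r)\right]
	= \mathbf{B}_t(g\,,g)\cdot \< \bm{1}_{Q(s)}\,, \bm{1}_{Q(r)}\>_{L^2(\R^d)}
	= \mathbf{B}_t(g\,,g)\cdot\operatorname{Leb}(Q(s)\cap Q(r)).
\]
Because $Q(s)\subset Q(r)$ when $s\le r$ (the cube grows only in the first coordinate, from $[a, a+s(y_1-y'_1)]$ to $[a, a+r(y_1-y'_1)]$), the intersection equals $Q(s)$, whose Lebesgue measure is $s(y_1-y'_1)\prod_{j=2}^d (y_j-y'_j) = s\prod_{j=1}^d (y_j-y'_j)$. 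Writing $c := \mathbf{B}_t(g\,,g)\prod_{j=1}^d(y_j - y'_j)$, I therefore obtain $\Cov[X(s)\,,X(r)] = c\,(s\wedge r)$, which is exactly the covariance kernel of Brownian motion with variance parameter $c$.

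Thus $X$ is a centered Gaussian process with the Brownian covariance kernel; equivalently, its increments $X(r_2)-X(r_1)$ and $X(r_4)-X(r_3)$ over disjoint intervals $[r_1,r_2]$ and $[r_3,r_4]$ are uncorrelated and hence (being jointly Gaussian) independent, and $X(r)-X(s)$ has variance $c(r-s)$. To finish, I would produce a continuous version via Kolmogorov's continuity criterion: for $0\le s\le r$, $X(r)-X(s)$ is centered Gaussian with variance $c(r-s)$, so $\E|X(r)-X(s)|^4 = 3c^2(r-s)^2$, which is enough. The only place where any care is needed is this last step (and it is entirely routine), so I do not expect a serious obstacle here — all the substantive work was packaged into Theorem~\ref{th:1.1}, and the lemma is essentially a tautology about the Gaussian field $\Gamma_t$.
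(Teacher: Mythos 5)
Your proposal is correct and follows essentially the same route as the paper: compute $\Cov[X(s),X(r)]$ from the covariance form of $\Gamma_t$, identify it as $\mathbf{B}_t(g\,,g)\,(s\wedge r)\prod_{j=1}^d(y_j-y_j')$ via the overlap of the nested rectangles $Q(s)\subset Q(r)$, and conclude from the centered Gaussianity of $\Gamma$. The additional Kolmogorov-continuity step you include is harmless but not part of the paper's (shorter) argument.
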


\begin{proof}
	If $r,R\ge 0$, then \eqref{gamma} and \eqref{def:B_t} together imply that
	\[
		\Cov\left[ X(r)\,,X(R)\right] = \mathbf{B}_t(g\,,g) \left(
		\bm{1}_{Q(r)}\,,\bm{1}_{Q(R)}\right)_{L^2(\R^d)}
		=\mathbf{B}_t(g\,,g)\times\min(r\,,R)\times\prod_{j=1}^d (y_j-y'_j).
	\]
	Since $\Gamma$ is a centered Gaussian process, so is $X$.
	This completes the proof.
\end{proof}

Next we have the following uniform tightness result.

\begin{lemma}\label{lem:uniform:tight}
	The laws of $\{X_N(r)\}_{N>0}$ are tight uniformly over all $r\in[0\,,1]$;
	in fact,
	\begin{equation}\label{eq:X:tight}
		\sup_{r\in[0,1]}\sup_{N>0}\E\left( |X_N(r)|^k\right)\le
		\prod_{j=1}^k (y_j-y_j')^{k/2}
		\qquad\text{for every real number $k\ge2$}.
	\end{equation}
\end{lemma}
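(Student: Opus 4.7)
The plan is to derive the lemma as an immediate consequence of Theorem \ref{th:S}, exploiting the fact that the $N^{-d/2}$ factor in the moment bound of that theorem is exactly cancelled by the $N^{d/2}$ normalization that appears in the definition of $X_N$.

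First I would unwind the definitions. Because $Q(r)$ is a bounded rectangle, $\bm{1}_{Q(r)}\in L^1(\R^d)\cap L^2(\R^d)$, with
\[
\|\bm{1}_{Q(r)}\|_{L^2(\R^d)}^2 = \mathrm{Leb}(Q(r)) = r(y_1-y_1')\prod_{j=2}^d (y_j-y_j') \le \prod_{j=1}^d (y_j-y_j')
\]
for every $r\in[0,1]$, so in particular $\bm{1}_{Q(r)}$ falls squarely into the class of test functions covered by Theorem \ref{th:S}.

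Next I would fix any convenient $\varepsilon\in(0,1)$ (for concreteness $\varepsilon=\tfrac12$) and any time horizon $T>t$, and apply Theorem \ref{th:S} to $\psi=\bm{1}_{Q(r)}$. This yields
\[
\bigl\| \mathcal{S}_{N,t}\bigl(\bm{1}_{Q(r)},g\bigr) \bigr\|_k \le \frac{C_k}{N^{d/2}}\,\lip(g)\,\|\bm{1}_{Q(r)}\|_{L^2(\R^d)},
\]
where $C_k := \mathsf{A}(\varepsilon)\sqrt{Tk}\exp\{2T\Lambda(\mathsf{a}(\varepsilon)/k)\}$ is a finite constant depending only on $k,T,\sigma,f$ (since $\Lambda$ is finite under \eqref{Dalang} and \eqref{f:finite}). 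Multiplying by $N^{d/2}$ and raising the inequality to the $k$-th power then gives the asserted uniform bound on $\E(|X_N(r)|^k)$ in terms of a power of $\prod_{j=1}^d (y_j-y_j')^{1/2}$, and the bound is independent of both $N>0$ and $r\in[0,1]$.

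There is really no substantive obstacle here: Theorem \ref{th:S} was proved precisely so that applications of this type become one-line corollaries. The role of the present lemma is simply to record the $L^k$-moment bound in the form needed for the Kolmogorov-type tightness argument that will follow in the proof of Proposition \ref{pr:X}, where one will combine this uniform moment control with the asymptotic-independence consequences of Corollary \ref{co:AI} and Paul L\'evy's characterization of Brownian motion to upgrade the one-dimensional convergence in \eqref{X(1)} to the finite-dimensional convergence $X_N\xrightarrow{\mathrm{fdd}} X$.
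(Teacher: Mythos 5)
Your proposal is correct and follows essentially the same route as the paper: apply Theorem \ref{th:S} with $\psi=\bm{1}_{Q(r)}$, note that $\|\bm{1}_{Q(r)}\|_{L^2(\R^d)}^2=r\prod_{j=1}^d(y_j-y_j')\le\prod_{j=1}^d(y_j-y_j')$ for $r\in[0,1]$, and observe that the $N^{d/2}$ normalization cancels the $N^{-d/2}$ in the moment bound. The only cosmetic difference is that the paper finishes by invoking Chebyshev's inequality to pass from the uniform moment bound to uniform tightness, a step you leave implicit.
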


\begin{proof}
	According to Theorem \ref{th:S}, for every $k\ge2$,
	\[
		\sup_{N>0}\|X_N(r)\|_k^2 \lesssim\| \bm{1}_{Q(r)}\|_{L^2(\R^d)}^2
		=  r\prod_{j=1}^d (y_j-y'_j),
	\]
	where the implied constant does not depend on $r$. This implies \eqref{eq:X:tight}.
	To finish, we apply Chebyshev's inequality and \eqref{eq:X:tight}
	in order to see that
	$\sup_{r\in[0,1]}\sup_{N>0}\P\{ |X_N(r)| > \ell\}=o(1)$ as $\ell\to\infty$.
	This implies the desired uniform tightness.
\end{proof}

{
\begin{proof}[Proof of Proposition \ref{pr:X}]
    Without loss of generality, we restrict the processes $X_N$ and $X$ to $r\in [0, 1]$.
    We first apply Theorem \ref{th:S} and
	stationarity in order to see that there exists a real number $C>0$
	such that, for every $k\ge2$,
	\begin{equation}\label{Xle}
		\sup_{N>0}\|X_N(R) - X_N(r) \|_k \le C\left\|\bm{1}_{Q(R)} - \bm{1}_{Q(r)}\right\|_{L^2(\R^d)}
		=C'  \sqrt{R-r},
	\end{equation}
	uniformly for all $R\ge r\ge0$,
	where $C'=C\prod_{j=1}^d (y_j-y'_j)^{{ 1/2}}$,
	and $C$ does not depend on $r$ and $R$.
	Lemma \ref{lem:uniform:tight} and \eqref{Xle} imply that $\{X_N(r)\}_{r\in [0, 1]}$ is tight in $C[0, 1]$.
	Hence,
	for every unbounded sequence $0<N_1<N_2<\cdots$ there exists a
	subsequence $N'=\{N_n'\}_{n=1}^\infty$ and a process
	$Y=\{Y(r)\}_{r\in[0,1]}$ such that
	\begin{equation}\label{XY:Q}
		X_{N'_n}\xrightarrow{\text{\rm C[0, 1]}\,} Y\qquad\text{as $n\to\infty$}.
	\end{equation}
		As it turns out,  $\{Y(r)\}_{r\in\mathbb{Q}\cap[0,1]}$
	\emph{is} a rather nice stochastic process. In fact, we have the following.\footnote{Caveat.
		Infinitely-divisible processes need not be L\'evy processes, as the latter
		processes must be c\`adl\`ag as well.}
    \bigskip

	\noindent\textbf{Claim A.} \emph{We can realize $Y=\{Y(r)\}_{r\in\cap[0,1]}$
	as {a process with stationary and independent increments} such
	that $Y(0)=0$ and $\E[Y(r)]=0$ for all $r\in[0\,,1]$. } \bigskip

	In order to prove Claim A, let us choose and fix
	an integer $M\ge1$. An application of Theorem \ref{th:S}
	reveals that $X_N(0)=0$; and Corollary \ref{co:AI} ensures that, whenever
	$0=:r_0 < r_1 < \cdots < r_M$,
	\[
		\bigg\{ X_N(r_{i+1})-X_N(r_i)\bigg\}_{i=0}^{M-1}
		\quad \text{are asymptotically independent as $N\to\infty$.}
	\]
    Hence the random variables $Y(r_1)$, $Y(r_2) - Y(r_1)$, $\cdots, Y(r_M) - Y(r_{M-1})$
    are independent for $(r_i)_{i=1}^{M} \subset \mathbb{R}$.
	Moreover, since $u(t)$ is spatially stationary, the law of $X_N(r_{i+1})-X_N(r_i)$
	is the same as the distribution of $X_N(r_{i+1}-r_i)$
	for every $i=0,\ldots,M-1$, which implies that $Y(r_{i+1})-Y(r_i)$ has the same distribution as $Y(r_{i+1} - r_{i})$.
    Therefore, $Y=\{Y(r)\}_{r\in[0,1]}$
	is an infinitely-divisible process with stationary increments.
    It remains to prove that
	$\E[Y(r)]=0$ for all $r$, but this follows from
	the fact that $\E[X_N(r)]=0$ for all $N,r>0$,
	and uniform integrability which is assured by
	\eqref{eq:X:tight}. These remarks together prove Claim A.
	\bigskip

	\noindent\textbf{Claim B.} \emph{The process $Y=\{Y(r)\}_{r\in[0,1]}$ is
	a Brownian motion  normalized
	such that $\Var[Y(1)]=\mathbf{B}_t(g\,,g)\prod_{j=1}^d(y_j-y'_j)$.}
	\bigskip

	L\'evy proved a long time ago that
	the only continuous, mean-zero L\'evy process is Brownian motion. This is
	in fact an immediate consequence
	of the L\'evy--Khintchine formula; see Bertoin \cite{Bertoin}. Therefore, Claim B
	is proved once we show that the variance of $Y(1)$
	is as stated. But that variance formula follows at once from Proposition \ref{pr:Cov:asymp} { and  uniform integrability  assured by
	\eqref{eq:X:tight}}.
	This proves Claim B.

	We are ready to complete the proof of Proposition \ref{pr:X}.

	So far, we have proved that for every unbounded sequence $\{N_n\}_{n=1}^\infty$
	there exists a further subsequence $\{N'_n\}_{n=1}^\infty$ such that the
	distributions of  $X_{N'_n}$ converge to those of a Brownian motion $Y$ in the space $C[0, 1]$ as $n\to\infty$,
	and the speed of that Brownian motion  is always $\mathbf{B}_t(g\,,g)\prod_{j=1}^d(y_j-y'_j)$.
	Lemma \ref{lem:X=BM} tells us that the law of $Y$ is the same as the law of
	$X$
	regardless of the choice of the original subsequence $\{N_n\}_{n=1}^\infty$. This proves  Proposition \ref{pr:X}.
\end{proof}
}
\subsection{Convergence of f.d.d.s}
The following is a first key step in the proofs of both Theorem \ref{th:1.1}
and Theorem \ref{th:1.2}, and is the main result of this subsection.

\begin{proposition}\label{pr:1}
	For every $t\ge0$ and for every $\psi\in L^2(\R^d)$
	and $g\in\lip$,
	\begin{equation}\label{eq:1}
		N^{d/2} \mathcal{S}_{N,t}(\bullet\,,g) \xrightarrow{\text{\rm fdd}}\Gamma_t(\bullet\,,g)
		\qquad\text{and}\qquad
		N^{d/2}\mathcal{S}_{N,t}(\psi\,,\bullet)
		\xrightarrow{\text{\rm fdd}}
		\Gamma_t(\psi\,,\bullet),
	\end{equation}
	as $N\to\infty$.
\end{proposition}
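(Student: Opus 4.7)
\textbf{Proof proposal for Proposition \ref{pr:1}.}
The plan is to reduce both f.d.d.\ statements in \eqref{eq:1} via Cram\'er--Wold and the bilinearity of $\mathcal{S}_{N,t}$ and $\Gamma_t$ to a single one-dimensional claim, and then to assemble that claim from Proposition \ref{pr:X}, Corollary \ref{co:AI}, and Theorem \ref{th:S}. Since any linear combination $\sum_{j=1}^m\alpha_j N^{d/2}\mathcal{S}_{N,t}(\psi_j,g)$ equals $N^{d/2}\mathcal{S}_{N,t}(\sum_j\alpha_j\psi_j,g)$ --- and the analogous identity holds for linear combinations in the second argument, as well as for $\Gamma_t$ by the bilinearity recorded in the remark at the end of Section 2 --- both convergences in \eqref{eq:1} reduce to the single one-dimensional claim
\begin{equation*}
  N^{d/2}\mathcal{S}_{N,t}(\Psi,G)\xrightarrow{\text{\rm d}\,}\Gamma_t(\Psi,G)\qquad\text{for every $\Psi\in L^2(\R^d)$ and $G\in\lip$.}
\end{equation*}

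To attack this scalar convergence, I would first handle the case when $\Psi=\sum_{i=1}^k c_i\bm{1}_{R_i}$ is a simple function whose axis-aligned boxes $R_1,\ldots,R_k$ have pairwise Lebesgue-null intersections. Each such $R_i$ can be cast as $Q(1)$ for suitable parameters in \eqref{Q(r)}, so Proposition \ref{pr:X} delivers the marginal convergences $N^{d/2}\mathcal{S}_{N,t}(\bm{1}_{R_i},G)\xrightarrow{\text{\rm d}\,}\Gamma_t(\bm{1}_{R_i},G)$, each a centered Gaussian of variance $\mathbf{B}_t(G,G)|R_i|$. Corollary \ref{co:AI} then factorizes the limiting joint characteristic function of the $k$-tuple, so this vector converges in law to $k$ \emph{independent} centered Gaussians. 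Summing with the coefficients $c_i$ yields convergence of $N^{d/2}\mathcal{S}_{N,t}(\Psi,G)$ to a centered Gaussian with variance $\sum_i c_i^2\mathbf{B}_t(G,G)|R_i|=\mathbf{B}_t(G,G)\|\Psi\|_{L^2(\R^d)}^2$, matching $\Var[\Gamma_t(\Psi,G)]$ by \eqref{Cov:Gamma}.

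For arbitrary $\Psi\in L^2(\R^d)$, approximate by simple functions $\Psi^n$ of the above form with $\Psi^n\to\Psi$ in $L^2(\R^d)$. Theorem \ref{th:S} supplies the uniform-in-$N$ control $\|N^{d/2}\mathcal{S}_{N,t}(\Psi-\Psi^n,G)\|_2\lesssim\|\Psi-\Psi^n\|_{L^2(\R^d)}$, while \eqref{Cov:Gamma} gives $\Var[\Gamma_t(\Psi,G)-\Gamma_t(\Psi^n,G)]=\mathbf{B}_t(G,G)\|\Psi-\Psi^n\|_{L^2(\R^d)}^2\to 0$. A standard $3\varepsilon$ approximation lemma (e.g.\ Theorem 3.2 of Billingsley) now promotes the convergence from $\Psi^n$ to $\Psi$. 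The only subtle point is in the second step: one must carefully combine the \emph{marginal} Gaussian limits from Proposition \ref{pr:X} with the \emph{characteristic-function factorization} of Corollary \ref{co:AI} to conclude \emph{joint} convergence to an \emph{independent} Gaussian vector; once that is done correctly, both the density step and the final variance bookkeeping are routine, and no new estimates beyond those already established are required.
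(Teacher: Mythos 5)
Your proposal is correct and follows essentially the same route as the paper: the reduction of both f.d.d.\ statements via Cram\'er--Wold and bilinearity is the paper's Lemma \ref{lem:fdd}, the treatment of simple functions with essentially disjoint boxes via Proposition \ref{pr:X} and Corollary \ref{co:AI} is the paper's proof of Lemma \ref{lem:2}, and the $L^2$-density step using Theorem \ref{th:S} and the variance formula \eqref{Cov:Gamma} is the paper's Lemma \ref{lem:dense}. The ``subtle point'' you flag --- passing from marginal limits plus asymptotic independence to joint convergence toward a product law --- is handled in the paper by the remark immediately following Definition \ref{def:AI}, so no new argument is needed.
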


First we verify the following one-dimensional
version of Proposition \ref{pr:1}.

\begin{lemma}\label{lem:2}
	For every $t\ge0$ and for every $\psi\in L^2(\R^d)$
	and $g\in\lip$,
	\begin{equation}\label{eq:2}
		N^{d/2} \mathcal{S}_{N,t}(\psi\,,g) \xrightarrow{\text{\rm d}\,}\Gamma_t(\psi\,,g)
		\qquad\text{as $N\to\infty$}.
	\end{equation}
\end{lemma}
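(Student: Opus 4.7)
The plan is to prove Lemma \ref{lem:2} by a three-step density argument that progressively enlarges the class of test functions $\psi$, starting from indicators of axis-aligned rectangles, then extending by linearity together with asymptotic independence to piecewise-constant step functions on disjoint rectangles, and finally invoking $L^2$-density combined with the uniform bound of Theorem \ref{th:S}.

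For the base case, fix $t \ge 0$ and $g \in \lip$, and let $B = [y'_1,y_1]\times\cdots\times[y'_d,y_d]$ be an arbitrary axis-aligned rectangle. Taking $a = y'_1$ and $r=1$ in \eqref{Q(r)} gives $Q(1) = B$, and Proposition \ref{pr:X} specialised to $r = 1$ yields
\[
	N^{d/2}\mathcal{S}_{N,t}(\mathbf{1}_B\,, g) \xrightarrow{\text{\rm d}\,} \Gamma_t(\mathbf{1}_B\,, g) \qquad\text{as $N \to \infty$},
\]
where the limit is centred Gaussian with variance $\mathbf{B}_t(g,g)|B|$ in view of \eqref{Cov:Gamma}.

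Next, consider a simple function of the form $\psi = \sum_{j=1}^m c_j \mathbf{1}_{B_j}$, where $B_1,\ldots,B_m$ are pairwise essentially disjoint rectangles and $c_1,\ldots,c_m \in \R$. Linearity of $\mathcal{S}_{N,t}$ in $\psi$ gives
\[
	N^{d/2}\mathcal{S}_{N,t}(\psi\,, g) = \sum_{j=1}^m c_j \bigl[N^{d/2}\mathcal{S}_{N,t}(\mathbf{1}_{B_j}\,, g)\bigr].
\]
Because the $B_j$ have compact, essentially-disjoint supports, hypothesis \eqref{delta:S} of Corollary \ref{co:AI} is satisfied, so the $m$ random variables $\{N^{d/2}\mathcal{S}_{N,t}(\mathbf{1}_{B_j}\,, g)\}_{j=1}^m$ are asymptotically independent. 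Combined with the univariate convergence of each marginal from the base case, their joint law converges weakly to the law of $m$ independent centred Gaussians whose variances are $\mathbf{B}_t(g,g)|B_j|$. Applying the continuous mapping theorem to the linear map $(z_1,\ldots,z_m) \mapsto \sum_j c_j z_j$ yields $N^{d/2}\mathcal{S}_{N,t}(\psi\,, g) \xrightarrow{\text{\rm d}\,} Z$, where $Z$ is centred Gaussian with variance $\mathbf{B}_t(g,g)\sum_j c_j^2 |B_j| = \mathbf{B}_t(g,g)\|\psi\|_{L^2(\R^d)}^2$, which is precisely the law of $\Gamma_t(\psi\,, g)$ by \eqref{Cov:Gamma}.

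For the final step, fix arbitrary $\psi \in L^2(\R^d)$ and choose a sequence $\psi^{(n)}$ of simple functions of the type above with $\psi^{(n)} \to \psi$ in $L^2(\R^d)$; such a sequence is provided by the standard dyadic decomposition of a truncated approximation to $\psi$. Theorem \ref{th:S} delivers the crucial uniform bound
\[
	\sup_{N > 0}\bigl\|N^{d/2}\mathcal{S}_{N,t}(\psi - \psi^{(n)}\,, g)\bigr\|_2
	\lesssim \|\psi - \psi^{(n)}\|_{L^2(\R^d)}\, \lip(g) \xrightarrow{n\to\infty} 0,
\]
while the Gaussian side satisfies $\Gamma_t(\psi^{(n)}\,, g) \to \Gamma_t(\psi\,, g)$ in $L^2(\Omega)$ by \eqref{Cov:Gamma}. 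A standard three-epsilon argument (for example via characteristic functions) then upgrades the weak convergence from each $\psi^{(n)}$ to the full $\psi$. The only bookkeeping issue is to arrange each $\psi^{(n)}$ as a linear combination of indicators of pairwise disjoint rectangles so that Corollary \ref{co:AI} remains applicable, which is automatic for dyadic step functions; the main conceptual obstacle, namely passing from the pre-limit object (an integral of a nonlinear functional of a correlated random field) to a Gaussian limit, is already absorbed into the base case supplied by Proposition \ref{pr:X} and the asymptotic independence furnished by Corollary \ref{co:AI}.
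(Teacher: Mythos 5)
Your proof is correct and follows essentially the same route as the paper: reduce to the dense class of step functions supported on pairwise essentially-disjoint rectangles, use Proposition \ref{pr:X} for the single-rectangle base case, combine with Corollary \ref{co:AI} and linearity to treat finite linear combinations, and then pass to general $\psi\in L^2(\R^d)$ via the uniform $L^2$-bound from Theorem \ref{th:S}. The only cosmetic difference is that the paper packages the final density step as a separate Lemma (\ref{lem:dense}) using bounded Lipschitz test functions, where you sketch the equivalent three-epsilon argument via characteristic functions.
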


Proposition \ref{pr:1}  follows at once from Lemma \ref{lem:2}
and the following simple conditional result.

\begin{lemma}\label{lem:fdd}
	If Lemma \ref{lem:2} is true, so is Proposition \ref{pr:1}.
\end{lemma}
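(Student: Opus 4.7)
The plan is to invoke the Cram\'er--Wold device to upgrade the one-dimensional convergence of Lemma \ref{lem:2} into convergence of finite-dimensional distributions. Fix $t\ge 0$ and $g\in\lip$. To prove the first assertion in \eqref{eq:1} it suffices to show that for every integer $m\ge 1$, every $\psi^1,\ldots,\psi^m\in L^2(\R^d)$, and every $a_1,\ldots,a_m\in\R$,
\[
	\sum_{j=1}^m a_j\, N^{d/2}\mathcal{S}_{N,t}(\psi^j,g) \xrightarrow{\text{\rm d}\,} \sum_{j=1}^m a_j\, \Gamma_t(\psi^j,g) \qquad\text{as $N\to\infty$}.
\]

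First I would use the linearity of $\mathcal{S}_{N,t}(\cdot\,,g)$ on $L^2(\R^d)$ --- established in the paragraphs preceding Definition \ref{def:S}, where each $\mathcal{S}_{N,t}$ was extended by density to a random linear functional on $L^2(\R^d)\times\lip$ --- in order to rewrite the left-hand side as $N^{d/2}\mathcal{S}_{N,t}(\Psi,g)$ with $\Psi:=\sum_{j=1}^m a_j\psi^j\in L^2(\R^d)$. Then Lemma \ref{lem:2} applies directly and delivers convergence in distribution to $\Gamma_t(\Psi,g)$. Next I would invoke item (1) of the Remark following Theorem \ref{th:1.2}, which records that $\Gamma_t$ is a.s.\ bilinear, to identify $\Gamma_t(\Psi,g)=\sum_{j=1}^m a_j\Gamma_t(\psi^j,g)$ a.s., and hence in distribution. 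Cram\'er--Wold then yields the asserted joint convergence.

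The second assertion in \eqref{eq:1} is structurally identical: $\mathcal{S}_{N,t}(\psi,\cdot)$ is linear on $\lip$ --- immediate from \eqref{S} since both $g\mapsto g(u(t\,,x))$ and $g\mapsto\E[g(u(t\,,0))]$ are linear in $g$, and this extends to $\psi\in L^2(\R^d)$ by the density construction of $\mathcal{S}_{N,t}$. Hence for any $g^1,\ldots,g^m\in\lip$ and $a_1,\ldots,a_m\in\R$, the linear combination $\sum_j a_j N^{d/2}\mathcal{S}_{N,t}(\psi,g^j)$ collapses to $N^{d/2}\mathcal{S}_{N,t}(\psi,G)$ with $G:=\sum_j a_j g^j\in\lip$, Lemma \ref{lem:2} applies, and the bilinearity of $\Gamma_t$ unfolds the limit as before.

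There is no substantive obstacle here: the lemma is essentially a packaging statement, with Cram\'er--Wold doing all the work once the two bilinearities are in hand. The only mild subtlety is that the a.s.\ bilinearity of $\Gamma_t$ is invoked inside a weak limit, but this is harmless since we are identifying the law of a limit rather than asserting an almost-sure identity among the prelimit variables.
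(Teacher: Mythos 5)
Your proposal is correct and follows essentially the same route as the paper: the Cram\'er--Wold device, the linearity of $\psi\mapsto\mathcal{S}_{N,t}(\psi\,,g)$ and $g\mapsto\mathcal{S}_{N,t}(\psi\,,g)$ to collapse the linear combination into a single term, an application of Lemma \ref{lem:2}, and the a.s.\ bilinearity of $\Gamma_t$ to identify the limit. No further comment is needed.
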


\begin{proof}
	Choose and fix some $g\in\lip$.  Cram\'er–Wold theorem
	assures us that the
	first assertion of \eqref{eq:1} is equivalent
	to the statement that for every $a_1,\ldots,a_m\in\R$
	and $\psi_1,\ldots,\psi_m\in L^2(\R^d)$,
	\begin{equation}\label{eq:fdd:1}
		N^{d/2}\sum_{i=1}^m a_i \mathcal{S}_{N,t}\left(\psi_i\,,g\right)
		\xrightarrow{\text{\rm d\,}}
		\sum_{i=1}^m a_i\Gamma_t\left( \psi_i\,,g\right).
	\end{equation}
	Define $\psi:=\sum_{i=1}^m a_i\psi_i\in L^2(\R^d)$
	and use bilinearity to see that the left-hand side of
	\eqref{eq:fdd:1} is equal to $N^{d/2}\mathcal{S}_{N,t}(\psi\,,g)$
	whereas the right-hand side of
	\eqref{eq:fdd:1} is equal to $\Gamma_t(\psi\,,g)$.
	Therefore, eq.\ \eqref{eq:fdd:1} --- and hence also
	the first assertion of \eqref{eq:1} --- both follow from \eqref{eq:2}.
	The second claim in \eqref{eq:1} is proved similarly.
\end{proof}

Thus, it remains to demonstrate Lemma \ref{lem:2}. That proof requires
some effort which we distribute in parts. The first portion
of that proof is a  ``density lemma'', that is presented next.

\begin{lemma}\label{lem:dense}
	Suppose that $\mathscr{E}$ is a dense subset of $L^2(\R^d)$
	such that \eqref{eq:2} holds for every $\psi\in\mathscr{E}$.
	Then, \eqref{eq:2} is valid  for all $\psi\in L^2(\R^d)$.
\end{lemma}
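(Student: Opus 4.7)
The plan is to push the convergence from the dense subset $\mathscr{E}$ up to all of $L^2(\R^d)$ by a standard three-term approximation argument, with the key uniform control coming from Theorem~\ref{th:S}.

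Fix $t\ge0$, $g\in\lip$, and $\psi\in L^2(\R^d)$. Choose $\psi^n\in\mathscr{E}$ with $\psi^n\to\psi$ in $L^2(\R^d)$. For each fixed $n$, set
\[
    X_N := N^{d/2}\mathcal{S}_{N,t}(\psi\,,g),\qquad
    X_N^n := N^{d/2}\mathcal{S}_{N,t}(\psi^n,g),\qquad
    Y^n := \Gamma_t(\psi^n,g),\qquad
    Y := \Gamma_t(\psi,g).
\]
By the linearity of $\mathcal{S}_{N,t}$ in its first argument (see the construction following Theorem~\ref{th:S}) and by Theorem~\ref{th:S} itself, there is a constant $C=C(t,g)$ independent of $N$ and $n$ with
\[
    \|X_N-X_N^n\|_2 = N^{d/2}\|\mathcal{S}_{N,t}(\psi-\psi^n,g)\|_2 \le C\,\|\psi-\psi^n\|_{L^2(\R^d)}.
\]
Similarly, the bilinearity of $\Gamma_t$ together with \eqref{Cov:Gamma} gives
\[
    \|Y^n-Y\|_2^2 = \Var[\Gamma_t(\psi^n-\psi,g)] = \mathbf{B}_t(g,g)\,\|\psi^n-\psi\|_{L^2(\R^d)}^2.
\]
Thus both $\|X_N-X_N^n\|_2$ (uniformly in $N$) and $\|Y^n-Y\|_2$ vanish as $n\to\infty$.

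Now compare characteristic functions. For any $\lambda\in\R$,
\begin{align*}
    \bigl|\E\e^{i\lambda X_N} - \E\e^{i\lambda Y}\bigr|
    &\le \bigl|\E\e^{i\lambda X_N} - \E\e^{i\lambda X_N^n}\bigr|
    + \bigl|\E\e^{i\lambda X_N^n} - \E\e^{i\lambda Y^n}\bigr|
    + \bigl|\E\e^{i\lambda Y^n} - \E\e^{i\lambda Y}\bigr| \\
    &\le |\lambda|\,\|X_N-X_N^n\|_2 + \bigl|\E\e^{i\lambda X_N^n} - \E\e^{i\lambda Y^n}\bigr|
    + |\lambda|\,\|Y^n-Y\|_2,
\end{align*}
using $|\e^{ix}-\e^{iy}|\le|x-y|$ and the Cauchy--Schwarz inequality. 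The middle term tends to $0$ as $N\to\infty$ for each fixed $n$ by the hypothesis that \eqref{eq:2} holds on $\mathscr{E}$. Therefore
\[
    \limsup_{N\to\infty}\bigl|\E\e^{i\lambda X_N} - \E\e^{i\lambda Y}\bigr|
    \le |\lambda|\bigl(C + \sqrt{\mathbf{B}_t(g,g)}\bigr)\,\|\psi-\psi^n\|_{L^2(\R^d)}.
\]
Letting $n\to\infty$, the right-hand side vanishes, so $\E\e^{i\lambda X_N}\to\E\e^{i\lambda Y}$ for every $\lambda\in\R$. By L\'evy's continuity theorem, $X_N\xrightarrow{\text{d}} Y$, which is \eqref{eq:2} for $\psi$.

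There is no substantive obstacle here: the only thing to verify with care is that the approximation error $\|X_N-X_N^n\|_2$ is uniform in $N$, and that is precisely the content of Theorem~\ref{th:S}.
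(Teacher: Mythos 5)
Your proof is correct and follows essentially the same route as the paper: a three-term approximation in which the uniform-in-$N$ control of $\|X_N-X_N^n\|_2$ comes from Theorem~\ref{th:S} and the control of $\|Y^n-Y\|_2$ comes from the covariance formula \eqref{Cov:Gamma}. The only cosmetic difference is that you test against the characteristic functions $x\mapsto \e^{i\lambda x}$ and invoke L\'evy's continuity theorem, whereas the paper tests against general bounded Lipschitz functions $H$; since $\e^{i\lambda x}$ is itself bounded and Lipschitz, this is the same argument specialized to a particular convergence-determining class.
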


\begin{proof}
	Choose and fix $(\psi\,,g)\in L^2(\R^d)\times\lip$.
	For every $\varepsilon>0$ we can find $\phi\in\mathscr{E}$
	such that $\|\phi-\psi\|_{L^2(\R^d)}\le\varepsilon$. According to Theorem \ref{th:S},
	there exists a real number $K$ --- independent of $\psi$ and $\phi$ --- such that
	\[
		 \sup_{N>0}  \left\| N^{d/2}\mathcal{S}_{N,t}(\psi\,,g) - N^{d/2}\mathcal{S}_{N,t}(\phi\,,g)\right\|_2 =
		 \sup_{N>0} \left\| N^{d/2}\mathcal{S}_{N,t}(\psi-\phi\,,g) \right\|_2
		 \le K\varepsilon.
	\]
	Let $H:\R\to\R$ be bounded and Lipschitz continuous. By virtue of the
	definition of $\mathscr{E}$,
	\[
		\lim_{N\to\infty}\Delta_N=0,
		\quad\text{where}\quad
		\Delta_N :=\left| \E \left[ H( N^{d/2} \mathcal{S}_{N,t}(\phi\,,g)) \right]
		- \E \left[ H (\Gamma_t(\phi\,,g)) \right]\right|.
	\]
	Now,
	\[
		\left| \E \left[H( N^{d/2} \mathcal{S}_{N,t}(\phi\,,g)) \right] -
		\E \left[ H( N^{d/2} \mathcal{S}_{N,t}(\psi\,,g)) \right] \right|
		\le K\lip(H)\varepsilon,
	\]
	and
	\begin{align*}
		\left| \E \left[ H( \Gamma_t(\phi\,,g)) \right] -
			\E \left[ H( \Gamma_t(\psi\,,g)) \right] \right|
			&\le \lip(H)\left\| \Gamma_t(\phi\,,g) - \Gamma_t(\psi\,,g)\right\|_2
			=\lip(H)\left\| \Gamma_t(\phi-\psi\,,g)\right\|_2\\
		&=\lip(H)\|\phi-\psi\|_{L^2(\R^d)}\sqrt{\mathbf{B}_t(g\,,g)}
			\le\lip(H)\sqrt{\mathbf{B}_t(g\,,g)}\,\varepsilon \\
		&=: L\lip(H)\varepsilon,
	\end{align*}
	for a real number $L>0$ that is independent of $\psi$ and $\phi$.
	Thus, it follows from the triangle inequality that
	\[
		\left| \E \left[ H( N^{d/2}\mathcal{S}_{N,t}(\psi\,,g)) \right] -
		\E \left[ H(\Gamma_t(\psi\,,g)) \right]
		\right| \le\Delta_N+(K+L)\lip(H)\varepsilon.
	\]
	Let $N\to\infty$ and $\varepsilon\to0$, in this order, to see that
	the quantity in the left-hand side of the above tends to zero as $N\to\infty$.
	Because bounded, Lipschitz-continuous functions are convergence-determining,
	this suffices to establish the asserted weak convergence
	of $N^{d/2}\mathcal{S}_{N,t}(\psi\,,g)$ to $\Gamma_t(\psi\,,g)$.
\end{proof}

In light of Lemma \ref{lem:fdd}, it suffices to prove Lemma \ref{lem:2} for a dense class
 $\mathscr{E}$ in $L^2(\R^d)$;
Proposition \ref{pr:1} follows \emph{a fortiori}.

\begin{proof}[Proof of Lemma \ref{lem:2}]
	Define $\mathscr{E}$
	to be the collection of all functions $\psi\in L^2_c(\R^d)$
	that have the form,
	\begin{equation}\label{psi:rep}
		\psi=\psi_1+\cdots+\psi_m,
		\quad\text{where}\quad
		\psi_i(x) = a_i\bm{1}_{[y^i,\,z^i]}(x)\quad\text{for all $x\in\R^d$},
	\end{equation}
	$m\ge1$ is an integer, $a_1,\ldots,a_m\in\R\setminus\{0\}$,  $y^1,\ldots,y^m\in\R^d$,
	$z^1,\ldots,z^m\in\R^d$, with $y^j \le z^j$;
	and
	\[
		\text{\rm Leb}\left( [y^i\,,z^i] \cap [y^j\,,z^j]\right)=0
		\quad\text{whenever $1\le i\neq j\le m$}.
	\]
	It is easy to see that $\mathscr{E}$ is dense in $L^2(\R^d)$; this is
	an exercise in the theory of Lebesgue integration. Therefore, Lemma
	\ref{lem:dense} will imply Lemma \ref{lem:2} once we prove that
	$N^{d/2} \mathcal{S}_{N,t}(\psi\,,g) \xrightarrow{\text{\rm d}\,}
	\Gamma_t(\psi\,,g)$, as $N\to\infty$, for every $t\ge0$,
	$\psi\in\mathscr{E}$, and $g\in\lip$.
	With this aim in mind, let us choose and fix some $t\ge0$, $\psi\in\mathscr{E}$,
	and $g\in\lip$, and assume that $\psi$ has the representation in \eqref{psi:rep}.
	By bilinearity,
	\[
		N^{d/2} \mathcal{S}_{N,t}(\psi\,,g) = N^{d/2}\sum_{i=1}^m
		\mathcal{S}_{N,t}( \psi_i\,,g) =: \sum_{i=1}^m X_{i,N}\qquad\text{a.s.,}
	\]
	where $X_{i,N} := N^{d/2}\mathcal{S}_{N,t}(\psi_i\,,g)$.
	Corollary \ref{co:AI} ensures that $\{X_{i,N}\}_{i=1}^m$
	describes an asymptotically independent sequence as $N\to\infty$;
	and Proposition \ref{pr:X}  implies that
	\[
		X_{i,N}\xrightarrow{\text{d}\,}\Gamma_t(\psi_i\,,g)
		\qquad\text{as $N\to\infty$,
		for every $i=1,\ldots,m$}.
	\]
	The asserted asymptotic independence then implies that
	\[
		N^{d/2} \mathcal{S}_{N,t}(\psi\,,g) \xrightarrow{\text{d}\,} Y_1+\cdots+Y_m
		\qquad\text{as $N\to\infty$},
	\]
	where $Y_1,\ldots,Y_m$ are independent, and the distribution of $Y_i$
	is the same as that of $\Gamma_t(\psi_i\,,g)$ for every $i=1,\ldots,m$.
	Because the supports of
	the $\psi_i$'s are disjoint, $\Gamma_t(\psi_1\,,g),\ldots,\Gamma_t(\psi_m\,,g)$
	are uncorrelated, hence independent, Gaussian random variables.
	In particular, we can rewrite the preceding in the following equivalent form:
	\[
		N^{d/2} \mathcal{S}_{N,t}(\psi\,,g) \xrightarrow{\text{d}\,}
		\Gamma_t(\psi_1\,,g)+\cdots+\Gamma_t(\psi_m\,,g)\qquad
		\text{as $N\to\infty$},
	\]
	This fact and the linearity of $\phi\mapsto\Gamma_t(\phi\,,g)$ together imply that
	$N^{d/2}\mathcal{S}_{N,t}(\psi\,,g)$ converges in distribution to $\Gamma_t(\psi\,,g)$
	for every $\psi\in\mathscr{E}$, as was desired. This concludes the proof of
	Lemma \ref{lem:2}.
\end{proof}

\subsection{Metric entropy}\label{subsec:ME}
Let $(\mathscr{T},\mathsf{d})$ be a compact metric space and
$X:=\{X(t)\}_{t\in \mathscr{T}}$ a stochastic process indexed by $\mathscr{T}$.
Define $\Delta(\mathscr{T}) := \max_{s,t\in\mathscr{T}}\mathsf{d}(s\,,t)$
to be the diameter of $\mathscr{T}$, and set
\[
	\Psi(u) := \max_{s,t\in\mathscr{T}}\P\left\{|X_s-X_t|> \mathsf{d}(s\,,t)u\right\}
	\qquad\text{for all $u\ge0$}.
\]
We may now define a ``tail probability function,''
\[
	\bm{\tau}(\lambda) := \int_0^\infty\left(\lambda\Psi(u)\wedge 1\right)\d u
	\qquad\text{for all $\lambda>0$}.
\]
It is known generally that, if $\bm{\tau}(\lambda)\to0$ sufficiently rapidly as $\lambda\to0$
then $X$ has a continuous modification. The following result is a concrete version of a
family of known results in the literature, particularly well worked out for Gaussian processes $X$
(see Chapter 6 of Marcus and Rosen \cite{MR}, for example). Here and throughout, define
$\bm{N}_{\mathscr{T}}$ to be the \emph{metric entropy} of $(\mathscr{T}\!,d)$. That is,
for every $r>0$,
$\bm{N}_{\mathscr{T}}(r):=$ the minimum number of open $\mathsf{d}$-balls of radius $r$
needed to cover $\mathscr{T}$.

\begin{theorem}\label{th:ME}
	For every finite set $\mathscr{S}\subset\mathscr{T}$
	and for all $\delta\in(0\,,\Delta(\mathscr{S}) )$,
	\begin{equation}\label{eq:ME}
		\E\left(\max_{\substack{s,t\in\mathscr{S}:\\
		\mathsf{d}(s,t)\le\delta}} |X_s-X_t|\right) \le 32\int_0^{\delta/4}
		\bm{\tau}\left(\left| \bm{N}_{\mathscr{S}}(r) \right|^2\right)\d r.
	\end{equation}
	In particular, if $\int_{0^+}
	\bm{\tau}(|\bm{N}_{\mathscr{T}}(r)|^2)\,\d r<\infty$, then
	$X$ has a continuous modification.
\end{theorem}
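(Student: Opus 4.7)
The plan is to prove \eqref{eq:ME} by a standard dyadic chaining argument, and then to deduce the existence of a continuous modification by applying this bound along a countable dense set. Fix the finite set $\mathscr{S}$ and $\delta\in(0\,,\Delta(\mathscr{S}))$, and set $r_k:=\delta\cdot 2^{-k-2}$ for $k\ge 0$, so that $r_0=\delta/4$. For each $k$, pick a minimal $r_k$-net $\mathscr{S}_k\subseteq\mathscr{S}$ with $|\mathscr{S}_k|\le\bm{N}_{\mathscr{S}}(r_k)$, and let $\pi_k:\mathscr{S}\to\mathscr{S}_k$ send $s$ to a closest point in $\mathscr{S}_k$, so that $\mathsf{d}(s\,,\pi_k(s))\le r_k$. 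Since $\mathscr{S}$ is finite, $\pi_k(s)=s$ once $k$ is sufficiently large, so all sums below terminate.

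The chaining identity $X_s-X_t=(X_{\pi_0(s)}-X_{\pi_0(t)})+\sum_{k\ge 0}\{(X_{\pi_{k+1}(s)}-X_{\pi_k(s)})-(X_{\pi_{k+1}(t)}-X_{\pi_k(t)})\}$ combined with the triangle inequality yields, after taking the maximum over $(s\,,t)$ with $\mathsf{d}(s\,,t)\le\delta$, the upper bound $\max|X_{\pi_0(s)}-X_{\pi_0(t)}|+2\sum_{k\ge 0}\max_{s\in\mathscr{S}}|X_{\pi_{k+1}(s)}-X_{\pi_k(s)}|$. To bound expectations of such maxima I will use the elementary estimate $\E[\max_{i\in I}|Y_i|]\le\int_0^\infty(|I|\max_i\P\{|Y_i|>u\}\wedge 1)\,\d u$, together with the trivial consequence $\P\{|X_p-X_q|>u\}\le\Psi(u/\mathsf{d}(p\,,q))$ of the definition of $\Psi$ and its monotonicity in $u$. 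A change of variables yields, for any family of pairs $(p_i\,,q_i)_{i\in I}$ with $\max_i\mathsf{d}(p_i\,,q_i)\le D$, the clean inequality $\E[\max_i|X_{p_i}-X_{q_i}|]\le D\,\bm{\tau}(|I|)$.

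Applying this at level $k$: the transitions $(\pi_k(s)\,,\pi_{k+1}(s))$ have $\mathsf{d}\le r_k+r_{k+1}$ and form a family of cardinality at most $|\mathscr{S}_k|\cdot|\mathscr{S}_{k+1}|\le\bm{N}_{\mathscr{S}}(r_{k+1})^2$, contributing at most $(r_k+r_{k+1})\bm{\tau}(\bm{N}_{\mathscr{S}}(r_{k+1})^2)$; the initial pairs $(\pi_0(s)\,,\pi_0(t))$ satisfy $\mathsf{d}\le\delta+2r_0\le 3\delta/2$ and are at most $\bm{N}_{\mathscr{S}}(\delta/4)^2$ in number. Because $r\mapsto\bm{\tau}(\bm{N}_{\mathscr{S}}(r)^2)$ is non-increasing (since $\bm{N}_{\mathscr{S}}$ is non-increasing in $r$ while $\bm{\tau}$ is non-decreasing in its argument), each dyadic term $r_k\bm{\tau}(\bm{N}_{\mathscr{S}}(r_{k+1})^2)$ is bounded by a constant multiple of $\int_{r_{k+2}}^{r_{k+1}}\bm{\tau}(\bm{N}_{\mathscr{S}}(r)^2)\,\d r$, and likewise the initial term is bounded by a multiple of $\int_0^{r_0}\bm{\tau}(\bm{N}_{\mathscr{S}}(r)^2)\,\d r$. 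Summing telescopes into a constant times $\int_0^{\delta/4}\bm{\tau}(\bm{N}_{\mathscr{S}}(r)^2)\,\d r$. The main obstacle is the bookkeeping: recovering exactly the constant $32$ requires carefully tracking the initial ``slack'' term alongside the geometric sum of dyadic transitions, but the mechanism is otherwise routine.

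Finally, to deduce existence of a continuous modification under the hypothesis $\int_{0^+}\bm{\tau}(\bm{N}_{\mathscr{T}}(r)^2)\,\d r<\infty$, I would fix a countable dense $D\subset\mathscr{T}$, apply \eqref{eq:ME} to an increasing exhaustion of $D$ by finite subsets $D_n$, and use the monotonicity $\bm{N}_{D_n}\le\bm{N}_{\mathscr{T}}$ together with the monotone convergence theorem to obtain $\E\bigl[\sup_{s,t\in D:\,\mathsf{d}(s,t)\le\delta}|X_s-X_t|\bigr]\le 32\int_0^{\delta/4}\bm{\tau}(\bm{N}_{\mathscr{T}}(r)^2)\,\d r$. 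Letting $\delta\downarrow 0$ shows that $X|_D$ is a.s.\ uniformly continuous and therefore extends uniquely to a continuous random field $\tilde X$ on $\mathscr{T}$. Since $\Psi(u)\to 0$ as $u\to\infty$, the original field $X$ is continuous in probability, so $\tilde X_t=X_t$ a.s.\ for every $t\in\mathscr{T}$, i.e., $\tilde X$ is a modification of $X$.
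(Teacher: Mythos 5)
Your proof is correct, but it takes a genuinely different route from the paper's. You run a direct two-point dyadic chaining: both $s$ and $t$ are chained through the same nested nets, the increments at each level are controlled by the union-bound estimate $\E[\max_i|X_{p_i}-X_{q_i}|]\le D\,\bm{\tau}(|I|)$ (which is exactly the paper's Lemma \ref{lem:entropy:1}), and the square $|\bm{N}_{\mathscr{S}}(r)|^2$ appears because you count \emph{pairs} of net points at consecutive levels. The paper instead first proves a one-point chaining bound $\E[\max_t|X_t-X_{t_0}|]\le 8\int_0^{\Delta/4}\bm{\tau}(\bm{N}(r))\,\d r$ (Lemma \ref{lem:entropy}, phrased via Kolmogorov capacity) and then \emph{tensorizes}: it applies that lemma to the process $\tilde X_{(s,t)}=X_t-X_s$ on $\mathscr{T}\times\mathscr{T}$ with the max metric, where the square comes from $\bm{N}_{\tilde{\mathscr{T}}}\le\bm{N}_{\mathscr{T}}^2$ and the constant $32=8\times4$ from $\tilde{\bm{\tau}}\le4\bm{\tau}$. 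Your approach is the more classical one and, notably, it localizes to pairs with $\mathsf{d}(s,t)\le\delta$ in a transparent way (your nets start at scale $\delta/4$, so the integral's upper limit $\delta/4$ comes out directly), whereas the paper's tensorization as written bounds the unrestricted maximum with upper limit $\Delta(\mathscr{T})/4$. Two small bookkeeping points you should settle: (i) a minimal $r$-cover need not have centers in $\mathscr{S}$, so either pass to packing numbers as the paper does, or pick one point of $\mathscr{S}$ from each covering ball at the cost of doubling the covering radius --- this only affects the constant; (ii) carrying out your own scheme (level-$k$ contribution $\lesssim r_{k+1}\bm{\tau}(\bm{N}(r_{k+1})^2)\lesssim\int_{r_{k+2}}^{r_{k+1}}\bm{\tau}(\bm{N}(r)^2)\,\d r$ plus the top-level term $\lesssim\delta\,\bm{\tau}(\bm{N}(\delta/4)^2)\le 4\int_0^{\delta/4}\bm{\tau}(\bm{N}(r)^2)\,\d r$) yields a constant at most $32$, so the stated inequality follows; there is no need to reproduce $32$ exactly. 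Your deduction of the continuous modification (exhaustion of a countable dense set, monotone convergence, and continuity in probability, which follows since finiteness of $\bm{\tau}$ forces $\Psi(u)\to0$) matches the standard argument the paper invokes without detail.
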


The proof involves a more-or-less standard ``chaining
argument.''
We include it in order to demonstrate the ubiquitous nature
of the multiplicative constant ``32'' in front of the metric entropy integral
on the right-hand side of \eqref{eq:ME}.

First, we establish two elementary lemmas.

\begin{lemma}\label{lem:entropy:1}
	Let $\Theta\subset \mathscr{T}\times \mathscr{T}$ be a finite set of cardinality $|\Theta|$. Then,
	\[
		\E\left[\max_{(s,t)\in\Theta}|X_t-X_s|\right]
		\le \bm{\tau}(|\Theta|)\cdot \sup_{(s, t)\in \Theta}\mathsf{d} (s\,, t).
	\]
\end{lemma}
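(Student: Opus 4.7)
The plan is to use the layer-cake (tail integral) representation of the expectation together with a union bound over the pairs in $\Theta$, and then reinterpret the result in terms of the ``tail probability function'' $\bm{\tau}$.

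Set $D := \sup_{(s,t)\in\Theta}\mathsf{d}(s\,,t)$ and $Y := \max_{(s,t)\in\Theta}|X_t-X_s|$. Since $Y\ge0$, the formula $\E[Y]=\int_0^\infty\P\{Y>y\}\,\d y$ applies. Changing variables by $y=Du$ turns this into
\[
	\E[Y] = D\int_0^\infty\P\{Y > Du\}\,\d u.
\]
Next, I would estimate $\P\{Y > Du\}$ in two ways. Trivially $\P\{Y>Du\}\le 1$. On the other hand, a union bound gives
\[
	\P\{Y>Du\}\le\sum_{(s,t)\in\Theta}\P\{|X_t-X_s|>Du\}.
\]
Because $\mathsf{d}(s\,,t)\le D$ for every $(s\,,t)\in\Theta$, one has $Du\ge\mathsf{d}(s\,,t)u$, and the definition of $\Psi$ then yields $\P\{|X_t-X_s|>Du\}\le\P\{|X_t-X_s|>\mathsf{d}(s\,,t)u\}\le\Psi(u)$ for each pair. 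Combining these two bounds gives $\P\{Y>Du\}\le |\Theta|\Psi(u)\wedge1$.

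Plugging this into the tail integral produces
\[
	\E[Y] \le D\int_0^\infty\bigl(|\Theta|\Psi(u)\wedge1\bigr)\d u = D\cdot\bm{\tau}(|\Theta|),
\]
which is exactly the claimed inequality. There is no real obstacle here: the proof is essentially bookkeeping once one sees that the natural change of variables $y=Du$ aligns the tail integral with the definition of $\bm{\tau}$. The only mild subtlety is making sure that the trivial bound ``$\wedge 1$'' and the union-bound estimate ``$|\Theta|\Psi(u)$'' are combined correctly under the integral, which is precisely what $\bm{\tau}(|\Theta|)$ encodes.
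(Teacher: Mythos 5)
Your proof is correct and follows essentially the same route as the paper: a union bound on the tail probability, capped at $1$, followed by the tail-integral representation of the expectation. The paper packages the argument slightly differently by bounding $\E\bigl[\max_{(s,t)\in\Theta}|X_t-X_s|/\mathsf{d}(s,t)\bigr]\le\bm{\tau}(|\Theta|)$ directly and then multiplying by $D$, whereas you change variables $y=Du$ in the tail integral, but these are the same computation.
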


\begin{proof}
	For every $u>0$,
	\[
		\P\left\{ \max_{(s,t)\in \Theta}
		\left|\frac{X_t-X_s}{\mathsf{d}(s\,,t)}\right| > u \right\}
		\le 1\wedge\sum_{(s,t)\in \Theta}\P\left\{
		\left|\frac{X_t-X_s}{\mathsf{d}(s\,,t)}\right| > u \right\}
		\le |\Theta| \Psi(u)\wedge 1.
	\]
	where $0\div 0 :=0$.
	Integrate $[\d u]$ to see that
	\[
		\E\left(\max_{(s,t)\in \Theta}
		\left|\frac{X_t-X_s}{\mathsf{d}(s\,,t)}\right|\right)\le \bm{\tau}(|\Theta|).
	\]
	This implies the lemma.
\end{proof}

Next we apply Lemma \ref{lem:entropy:1} to improve itself.

\begin{lemma}\label{lem:entropy}
	If $\mathscr{T}$ is a finite set, then
	\[
		\max_{t_0\in \mathscr{T}}\E\left( \max_{t\in \mathscr{T}}|X_t-X_{t_0}|\right)
		\le 8 \int_0^{\Delta(\mathscr{T})/4}\bm{\tau}(\bm{N}_{\mathscr{T}}(r))\,\d r.
	\]
\end{lemma}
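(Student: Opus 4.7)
The plan is to prove Lemma~\ref{lem:entropy} by a dyadic chaining argument built on top of Lemma~\ref{lem:entropy:1}. Fix an arbitrary $t_0 \in \mathscr{T}$ and write $\Delta := \Delta(\mathscr{T})$. Since the bound sought is uniform in $t_0$ and every approximating net below can be arranged to contain $t_0$, it suffices to treat one such $t_0$.

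First I would set up a decreasing dyadic sequence $r_n := c\Delta\cdot 2^{-n}$, with the constant $c$ tuned to land on the target constants. For each $n \ge 0$, I would select a minimal $r_n$-net $\mathscr{T}_n \subseteq \mathscr{T}$ containing $t_0$, with $|\mathscr{T}_n| = \bm{N}_{\mathscr{T}}(r_n)$, and a nearest-point projection $\pi_n : \mathscr{T} \to \mathscr{T}_n$ satisfying $\mathsf{d}(t, \pi_n(t)) \le r_n$. The crucial point is to pick these projections \emph{consistently}, i.e.\ $\pi_n \circ \pi_{n+1} = \pi_n$, so that each pair $(\pi_n(t), \pi_{n+1}(t))$ is determined by $\pi_{n+1}(t)$ alone. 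This forces the set
\[
\Theta_n := \{(\pi_n(t), \pi_{n+1}(t)) : t \in \mathscr{T}\}
\]
to have cardinality at most $\bm{N}_{\mathscr{T}}(r_{n+1})$. Because $\mathscr{T}$ is finite, $\pi_N$ becomes the identity for all $N$ large enough, so the chain terminates.

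With these ingredients I would telescope
\[
X_t - X_{t_0} = \sum_{n=0}^{N-1}\bigl(X_{\pi_{n+1}(t)} - X_{\pi_n(t)}\bigr)
\]
for each $t \in \mathscr{T}$, bound $\max_t|X_t - X_{t_0}|$ by the sum of the level-wise maxima, and apply Lemma~\ref{lem:entropy:1} to each $\Theta_n$. Since the triangle inequality gives $\mathsf{d}(\pi_n(t), \pi_{n+1}(t)) \le r_n + r_{n+1} \le 2 r_n$, that lemma yields
\[
\E\!\left[\max_{t\in\mathscr{T}}\,\bigl|X_{\pi_{n+1}(t)} - X_{\pi_n(t)}\bigr|\right] \le 2 r_n\,\bm{\tau}\bigl(\bm{N}_{\mathscr{T}}(r_{n+1})\bigr).
\]

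Finally I would sum these bounds over $n$ and convert the dyadic Riemann sum into the integral appearing in the claim: exploiting that $\bm{N}_{\mathscr{T}}$ is decreasing and $\bm{\tau}$ is increasing, one dominates $r_n\,\bm{\tau}(\bm{N}_{\mathscr{T}}(r_{n+1}))$ by a constant multiple of $\int_{r_{n+2}}^{r_{n+1}}\bm{\tau}(\bm{N}_{\mathscr{T}}(r))\,\d r$, and the intervals telescope to a single integral up to approximately $r_0$. I expect the main obstacle to be not the overall chaining strategy, which is classical, but the bookkeeping needed to hit precisely the constant $8$ and the upper limit $\Delta/4$; naive dyadic choices give only something like $6\int_0^{\Delta/2}\bm{\tau}(\bm{N}_{\mathscr{T}}(r))\,\d r$, so the constant $c$ and the starting scale $r_0$ must be chosen with care, using the monotonicity-comparison step above a little more tightly, to produce the stated form. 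The uniformity of each step in $t_0$ then delivers the $\max_{t_0\in\mathscr{T}}$ on the left-hand side.
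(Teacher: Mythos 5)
Your strategy is the paper's strategy: a dyadic chain of nets, Lemma \ref{lem:entropy:1} applied level by level to the sets of consecutive chain pairs, and a comparison of the resulting dyadic sum with the entropy integral. The one substantive difference is that you work with covering numbers throughout, whereas the paper runs the chain through the Kolmogorov capacity (packing numbers) $\bm{P}_{\mathscr{T}}(\varepsilon_i)$ at scales $\varepsilon_i=2^{-i}\Delta(\mathscr{T})$ and only converts to $\bm{N}_{\mathscr{T}}$ at the very end via $\bm{P}_{\mathscr{T}}(r)\le\bm{N}_{\mathscr{T}}(r/2)$; that last substitution, together with the identity $\varepsilon_i=4(\varepsilon_{i+1}-\varepsilon_{i+2})$, is precisely where the factor $8$ and the upper limit $\Delta(\mathscr{T})/4$ come from.

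Two points of bookkeeping in your version deserve care, and both are fixable. First, the telescoping identity $X_t-X_{t_0}=\sum_n(X_{\pi_{n+1}(t)}-X_{\pi_n(t)})$ requires the coarsest net to be the singleton $\{t_0\}$; it is not enough that $t_0\in\mathscr{T}_0$. So you cannot tune $r_0=c\Delta$ freely below $c=1$: you must take $r_0\ge\Delta$ (the paper gets this for free because $\bm{P}_{\mathscr{T}}(\Delta)=1$). Second, with the consistent (recursive) projections $\pi_n=\tilde\pi_n\circ\pi_{n+1}$ that you need for the cardinality bound $|\Theta_n|\le\bm{N}_{\mathscr{T}}(r_{n+1})$, the distance between consecutive chain points is $\le r_n$ directly — do not give this away by invoking the triangle inequality to get $2r_n$, since the looser bound only yields $8\int_0^{\Delta/2}\bm{\tau}(\bm{N}_{\mathscr{T}}(r))\,\d r$, which is weaker than the claim. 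With the tight bound one gets $\sum_{m\ge1}2r_m\bm{\tau}(\bm{N}_{\mathscr{T}}(r_m))\le 4\int_0^{\Delta/2}\bm{\tau}(\bm{N}_{\mathscr{T}}(r))\,\d r$, and the substitution $r=2s$ together with the monotonicity of $\bm{N}_{\mathscr{T}}$ and $\bm{\tau}$ gives $\le 8\int_0^{\Delta/4}\bm{\tau}(\bm{N}_{\mathscr{T}}(s))\,\d s$, which is the stated bound.
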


\begin{proof}
	Let $\bm{P}_{\mathscr{T}}$ denote the \emph{Kolmogorov capacity}
	of $(\mathscr{T}\!,\mathsf{d})$. That is, for every $r>0$,
	$\bm{P}_{\mathscr{T}}(r):=$ the greatest integer $m\ge1$
	such that there exist points $t_1,\ldots,t_m\in\mathscr{T}$
	such that $\mathsf{d}(t_i\,,t_j)>r$ whenever $i\neq j$. It is well known that
	\begin{equation}\label{eq:N:C}
		\bm{N}_{\mathscr{T}}(2r) \le \bm{P}_{\mathscr{T}}(r) \le
		\bm{N}_{\mathscr{T}}(r/2)\qquad\text{for all $r>0$};
	\end{equation}
	see for example Marcus and Rosen \cite[Lemma 6.1.1]{MR}.

	For every integer $n\ge0$ define
	\[
		\varepsilon_n := 2^{-n}\Delta(\mathscr{T})
		\quad\text{and}\quad
		K_n := \bm{P}_{\mathscr{T}}(\varepsilon_n).
	\]
	One can see readily that $1=K_0\le K_1\le K_2\le\ldots$\,.

	The definition of Kolmogorov capacity ensures that for every integer
	$n\ge 0$ we can find a finite set $\mathscr{T}_n\subset \mathscr{T}$ such that:
	\begin{compactitem}
		\item $|\mathscr{T}_n|=K_n$, where $|\,\cdot|$ denotes cardinality;
		\item $\mathsf{d}(u\,,v)>\varepsilon_n$ for all distinct pairs of points
			$u,v\in \mathscr{T}_n$;
		\item $\inf_{s\in \mathscr{T}_n}\mathsf{d}(s\,,t) \le \varepsilon_n$ for all
			$t\in \mathscr{T}$; and
		\item There exists an integer
			$M=M(\mathscr{T}\!,\mathsf{d})\ge 1$ such that
			$\mathscr{T}_n=\mathscr{T}$ for all $n\ge M$.
	\end{compactitem}

	For every $n\ge0$ let $\pi_n$ denote the projection of $\mathscr{T}$ onto $\mathscr{T}_n$;
	more precisely, $\pi_n(t)$ denotes the point in $\mathscr{T}_n$ that is closest to $t$
	for every $t\in \mathscr{T}$.
	If there are many such points then we break the ties in some arbitrary fashion.
	Since $\mathscr{T}_0$
	is a singleton we can write it as $\mathscr{T}_0=\{t_0\}$ and observe that
	$\pi_0(t)=t_0$ for all $t\in \mathscr{T}$.
	Also, observe that $t_0\in \mathscr{T}$ can be chosen in a completely arbitrary manner,
	without altering any of the preceding statements.

	Since $\mathscr{T}_n=\mathscr{T}$ for all $n\ge M$ it follows that $\pi_n(t)=t$ for every $n\ge M$.
	Thus, to every $t\in \mathscr{T}$ we can associate a ``chain'' $\{t_i\}_{i=0}^\infty$ of points as follows:
	Set $t_n=\pi_M(t) = t$ for all $n\ge M$, and then
	recursively define $t_{i-1} =\pi_{i-1}(t_i)$ for all $i=M,\ldots,1$. This
	sequence ends with $t_0$ --- the unique element of $\mathscr{T}_0$ --- and therefore,
	$X_t - X_{t_0} = \sum_{i=0}^\infty( X_{t_{i+1}} - X_{t_i}).$
	Clearly, all of the summands vanish after the $M$-th term.
	In any case, it follows that
	\[
		\left| X_t - X_{t_0} \right| \le \sum_{i=0}^\infty
		\max_{u\in \mathscr{T}_{i+1}}\left| X_u - X_{\pi_i(u)} \right|,
	\]
	uniformly for all $t\in \mathscr{T}$. Since the right-hand side
	does not depend on the point $t$, Lemma \ref{lem:entropy:1} implies that
	\[
		\E\left( \max_{t\in \mathscr{T}} |X_t - X_{t_0}|\right) \le \sum_{i=0}^\infty
		\bm{\tau}(|\mathscr{T}_{i+1}|) \varepsilon_i
		= \sum_{i=0}^\infty \bm{\tau}
		\left( \bm{P}_{\mathscr{T}}(\varepsilon_{i+1})\right)
		\varepsilon_i;
	\]
	we have used the {fact} that
	$|\mathscr{T}_j| = \bm{P}_{\mathscr{T}}(\varepsilon_j)$.
	Since $\varepsilon_i = 4(\varepsilon_{i+1}-\varepsilon_{i+2})$
	for every $i\ge0$, we can then write
	\begin{align*}
		\E\left(\max_{t\in \mathscr{T}}|X_t - X_{t_0}|\right) &\le 4\sum_{i=0}^\infty
			\int_{\varepsilon_{i+2}}^{\varepsilon_{i+1}}
			\bm{\tau}\left( \bm{P}_{\mathscr{T}}(\varepsilon_{i+1})\right)\d r
			\le 4\sum_{i=0}^\infty\int_{\varepsilon_{i+2}}^{\varepsilon_{i+1}}
			\bm{\tau}\left( \bm{P}_{\mathscr{T}}(r)\right)\d r\\
		&= 4\int_0^{\Delta(\mathscr{T})/2}
			\bm{\tau}\left( \bm{P}_{\mathscr{T}}(r)\right)\d r
			\le 4\int_0^{\Delta(\mathscr{T})/2}
			\bm{\tau}\left(\bm{N}_{\mathscr{T}}(r/2)\right)\d r;
	\end{align*}
	see  \eqref{eq:N:C}. Because $t_0\in \mathscr{T}$ is arbitrary,
	this and a change of variables together yield the lemma.
\end{proof}

We are ready to prove Theorem \ref{th:ME}

\begin{proof}[Proof of Theorem \ref{th:ME}]
	We need only verify \eqref{eq:ME}; the continuity portion follows
	from the quantitative bound \eqref{eq:ME} and standard arguments.
	From now on, we may (and will) assume without loss of generality
	that $\mathscr{T}$  is finite and that $\mathscr{S}=\mathscr{T}$.
	Otherwise, restrict the index set of $X$ to $\mathscr{S}$ and
	relabel $\mathscr{S}$ as $\mathscr{T}$ everywhere that follows.

	The remainder of the proof of \eqref{eq:ME} hinges on ``tensorization.''

	Define $\tilde{\mathscr{T}} := \mathscr{T}\times \mathscr{T}$, and endow it with ``product distance,''
	\[
		\tilde{\mathsf{d}}\left( (s\,,t)\,,(s',t')\right) := \mathsf{d}(s\,,s') \vee \mathsf{d}(t\,,t')
		\qquad\text{for every $s,t,t',t'\in \mathscr{T}$}.
	\]
	The product nature of $\tilde{\mathscr{T}}$
	implies that if the balls $B_1,\ldots,B_m$ form an $\varepsilon$-cover for
	$(\mathscr{T}\!,\mathsf{d})$, then certainly the balls
	$\{B_i\times B_j\}_{i,j=1}^m$ form an $\varepsilon$-cover
	for $(\tilde{\mathscr{T}}\!,\tilde{\mathsf{d}})$. Consequently,
	\begin{equation}\label{NN2}
		\bm{N}_{\tilde{\mathscr{T}}}(\varepsilon) \le
		\left[ \bm{N}_{\mathscr{T}}(\varepsilon)\right]^2
		\qquad\text{for every $\varepsilon>0$}.
	\end{equation}

	Consider the stochastic process $\tilde{X}$, indexed by $\tilde{\mathscr{T}}$, as follows:
	\[
		\tilde{X}_{(s,t)} := X_t - X_s\qquad\text{for every $(s\,,t)\in\tilde{\mathscr{T}}$}.
	\]
	We may combine \eqref{NN2} and Lemma \ref{lem:entropy} (applied
	to $\tilde{X}$ in place of $X$) in order
	to see that
	\begin{equation}\label{ENT}
		\max_{\tilde{t}_0\in\tilde{\mathscr{T}}}
		\E\left( \max_{(s,t)\in\tilde{\mathscr{T}}}\left| \tilde{X}_{(s,t)} -
		\tilde{X}_{\tilde{t}_0} \right| \right) \le 8
		\int_0^{\Delta(\tilde{\mathscr{T}})/4}
		\tilde{\bm{\tau}}\left(\left| \bm{N}_{\mathscr{T}}(r)\right|^2\right)\d r,
	\end{equation}
	where  $\tilde{\bm{\tau}}(\lambda) := \int_0^\infty (\lambda\tilde\Psi(u)\wedge 1)\,\d u$
	for every $\lambda>0$, and
	\[
		\tilde\Psi(u):=
		\sup_{(s,t),(s',t')\in\tilde{\mathscr{T}}}\P\left\{
		\left| \tilde{X}_{(s,t)} - \tilde{X}_{(s',t')}\right| >
		\tilde{\mathsf{d}}\left( (s\,,t)\,,(s',t')\right) u\right\}\qquad
		[u>0].
	\]
	Note that
	\begin{align*}
		\tilde\Psi(u) &\le
			\sup_{(s,t),(s',t')\in\tilde{\mathscr{T}}}\P\left\{
			|X_s-X_{s'}| + |X_t-X_{t'}| >
			\tilde{\mathsf{d}}\left( (s\,,t)\,,(s',t')\right) u\right\}\\
		&\le\sup_{(s,t),(s',t')\in\tilde{T}}\P\left\{
			|X_s-X_{s'}|  > \tfrac12
			\tilde{\mathsf{d}}\left( (s\,,t)\,,(s',t')\right) u\right\} \\
		&\hskip1.5in +
			\sup_{(s,t),(s',t')\in\tilde{T}}\P\left\{
			 |X_t-X_{t'}| > \tfrac12
			\tilde{\mathsf{d}}\left( (s\,,t)\,,(s',t')\right) u\right\}.
	\end{align*}
	Therefore,
	$\tilde\Psi(u) \le 2\Psi(u/2)$ for every $u\ge0$, by virtue of the definition of $\tilde{\mathsf{d}}$ {and $\Psi$}.
	In particular,
	\begin{align*}
		\tilde{\bm{\tau}}(\lambda) &\le \int_0^\infty
		\left(2\lambda\Psi(u/2)\wedge 1\right)\d u
		\le 4\bm{\tau}(\lambda) \qquad\text{for all $\lambda>0$}.
	\end{align*}
	Since $\Delta(\tilde{\mathscr{T}}) = \Delta(\mathscr{T})$,
	\eqref{ENT} implies that, for every $\tilde{t}_0\in\tilde{\mathscr{T}}$,
	\[
		\E\left( \max_{(s,t)\in\tilde{\mathscr{T}}}
		\left| \tilde{X}_{(s,t)} - \tilde{X}_{\tilde{t}_0}\right| \right) \le
		32\int_0^{\Delta(\mathscr{T})/4}
		\bm{\tau}\left(\left| \bm{N}_{\mathscr{T}}
		(r)\right|^2\right) \,\d r,
	\]
	We now choose $\tilde{t}_0:=(t_0\,,t_0)$
	for an arbitrary but fixed point $t_0\in \mathscr{T}$.
	For this choice, $\tilde{X}_{\tilde{t}_0}=0$, and the theorem follows.
\end{proof}

\subsection{Proof of Theorems \ref{th:1.1} and \ref{th:1.2}}

We apply Lemma \ref{lem:tightness}  in order to see that
there exist  constants $K,L>0$ such that
\[
	\Psi(u) := \adjustlimits\sup_{N>0}\sup_{\psi\in L^2(\R^d)}\sup_{g\in\lip}
	\P\left\{ N^{d/2} \left| \mathcal{S}_{N,t}(\psi\,,g)\right|
	> \|\psi\|_{L^2(\R^d)} \|g\|_\lip u\right\} \lesssim
	\exp\left\{ -\frac{L \log_+(u)}{\Upsilon
	\left(K \log_+(u)\right)}\right\},
\]
uniformly for $u>0$.

Because the behavior of $\Upsilon$ can
depend on the fine details of the statistics of \eqref{SHE},
it is better to use the simple but general fact that $\lim_{\lambda\to\infty}\Upsilon(\lambda)=0$
in order to see that
\[
	\Psi(u) \lesssim u^{-1/\varepsilon}\qquad\text{for all $\varepsilon\in(0\,,1)$
	and $u>0$}.
\]
Therefore,
\[
	\bm{\tau}(\lambda) := \int_0^\infty\left( \lambda\Psi(u)\wedge1\right)\d u
	\lesssim\int_0^\infty\left(
	\frac{\lambda}{u^{1/\varepsilon}}\wedge 1\right)\d u
	\propto\lambda^\varepsilon
	\qquad\text{for all $\lambda>0$}.
\]
{
Since $\mathscr{F}$ is separable, let $\{r_1, r_2, \ldots\}$ be a dense subset of $\mathscr{F}$ and denote $\mathscr{F}_n= \mathscr{F}\cap \{r_1, r_2, \ldots, r_n\}$.
Now apply Theorem \ref{th:ME} in order to see
that for all $\varepsilon\in(0\,,1)$ there exists $C(\varepsilon)>0$ such that
for all $\delta\ll1$,
\begin{align*}
	\sup_{N>0} N^{d/2}
	\E\left(\max_{\substack{\psi\,,{\Phi}\in\mathscr{F}_n:\\
	\|{\Phi}-\psi\|_{L^2(\R^d)}\le\delta}}
	\left| \mathcal{S}_{N,t}({\Phi}\,,g) - \mathcal{S}_{N,t}(\psi\,,g)\right|\right)
	&\le C(\varepsilon) \int_0^{\delta/4}
	\left[ \bm{N}_{\mathscr{F}_n\!,L^2(\R^d)}(r)\right]^\varepsilon\,\d r\\
	&\le C(\varepsilon) \int_0^{\delta/4}
	\left[ \bm{N}_{\mathscr{F}\!,L^2(\R^d)}(r)\right]^\varepsilon\,\d r.
\end{align*}
By monotone convergence theorem, we let $n\to \infty$ to obtain that
\begin{align*}
	\sup_{N>0} N^{d/2}
	\E\left(\max_{\substack{\psi\,,{\Phi}\in\mathscr{F}:\\
	\|{\Phi}-\psi\|_{L^2(\R^d)}\le\delta}}
	\left| \mathcal{S}_{N,t}({\Phi}\,,g) - \mathcal{S}_{N,t}(\psi\,,g)\right|\right)
		&\le C(\varepsilon) \int_0^{\delta/4}
	\left[ \bm{N}_{\mathscr{F}\!,L^2(\R^d)}(r)\right]^\varepsilon\,\d r.
\end{align*}
}
Similarly, we can see that for all $\varepsilon\in(0\,,1)$ there exists
$C'(\varepsilon)>0$ such that
for all $\delta\ll1$,
\[
	\sup_{N>0} N^{d/2}
	\E\left(\max_{\substack{g\,,G\in\mathscr{G}:\\
	\|G-g\|_\lip\le\delta}}
	\left| \mathcal{S}_{N,t}(\psi\,,g) - \mathcal{S}_{N,t}(\psi\,,G)\right|\right)
	\le C'(\varepsilon) \int_0^{\delta/4}
	\left[ \bm{N}_{\mathscr{G}\!,\lip}(r)\right]^\varepsilon\,\d r.
\]
The above two bounds imply the requisite tightness results. Proposition
\ref{pr:1} and tightness together imply both
Theorems \ref{th:1.1} and \ref{th:1.2}; see \cite[p.58 and Theorem 5.1]{Bil99}.\qed

\subsection{Some examples}
Let us conclude with a few elementary examples of the sorts of classes of functions
that Theorems  \ref{th:1.1} and \ref{th:1.2} refer to.

\begin{example}\label{ex:BS:1}
	For our first example, let us choose and  fix some vector $m\in\R^d_+$ and define
	\[
		\mathscr{F} := \left\{ \bm{1}_{[0,y]}:\, y\in[0\,,m]^d\right\}.
	\]
	Because $\| \bm{1}_{[0,y]}-\bm{1}_{[0,z]}\|_{L^2(\R^d)} =
	| [0\,,y] \triangle [0\,,z]| ^{1/2}\lesssim \|y-z\| ^{1/2}$,
	uniformly for all $y,z\in[0\,,m]^d$, it follows that
	$\bm{N}_{\mathscr{F}\!,L^2(\R^d)}(r)\lesssim r^{-2d}$
	uniformly for all $r\in(0\,,1)$,
	and so $\int_0^1[\bm{N}_{\mathscr{F}\!,L^2(\R^d)}(r)]^\varepsilon\,\d r<\infty$
	for every $\varepsilon\in(0\,,1/(2d))$. Thus, we see that Theorem \ref{th:1.1} implies
	the weak convergence \eqref{W_{N,t}} to the Brownian sheet.
\end{example}

\begin{example}\label{ex:BS:2}
	Suppose $\mathscr{C}$ and $\mathscr{D}$ are compact subsets of $L^2(\R^d)$
	such that $\int_0^1[\bm{N}_{\mathscr{C}\!,L^2(\R^d)}(r)]^\varepsilon\,\d r+
	\int_0^1[\bm{N}_{\mathscr{D}\!,L^2(\R^d)}(r)]^\varepsilon\,\d r<\infty$
	for some $\varepsilon>0$. Define
	\[
		\mathscr{F} := \left\{ C*D:\, C\in \mathscr{C},\, D\in\mathscr{D}\right\},
	\]
	where ``$*$" refers to the convolution of two functions.
	Then by Young's inequality for convolutions,
	\begin{align*}
		\|(C*D)-(c*d)\|_{L^2(\R^d)} &\le \|(C*D)-(C*d)\|_{L^2(\R^d)} +
			\|(C*d)-(c*d)\|_{L^2(\R^d)}\\
		&\le \|C\|_{L^2(\R^d)}\|D-d\|_{L^2(\R^d)} + \|d\|_{L^2(\R^d)}\|C-c\|_{L^2(\R^d)}\\
		&\lesssim \|C-c\|_{L^2(\R^d)} + \|D-d\|_{L^2(\R^d)},
	\end{align*}
	uniformly for every $c,C\in\mathscr{C}$ and $d,D\in\mathscr{D}$. Thus,
	$\bm{N}_{\mathscr{F}\!,L^2(\R^d)}(r)\lesssim
	\bm{N}_{\mathscr{C}\!,L^2(\R^d)}(r)\bm{N}_{\mathscr{D}\!,L^2(\R^d)}(r),$
	uniformly for all $r\in(0\,,1)$. In particular,
	$\int_0^1[\bm{N}_{\mathscr{F}\!,L^2(\R^d)}(r)]^{\varepsilon/2}\,\d r<\infty,$
	thanks to the Cauchy-Schwarz inequality.
\end{example}

\begin{example}\label{ex:Lip:1}
	Choose and fix a $C^1$-function $g$ such that $g$ and $g'$ are Lipschitz. Define
	$g_a(u) := g( u-a )$ for all $a,u\in\R$, and set
	\[
		\mathscr{G} := \bigcup_{a\in[-n,n]}\{g_a\},
	\]
	where $n>0$ is a fixed real number. Since
	$\|g_a-g_b\|_{\lip} \le(\lip(g) + \lip(g'))|b-a|$ for all $a,b\in\R$, it follows that
	$\bm{N}_{\mathscr{G}\!,\lip}(r)\lesssim r^{-1}$, uniformly for all $r\in(0\,,1)$,
	and hence
	$\int_0^1[\bm{N}_{\mathscr{G},\lip}(r)]^\varepsilon\,\d r<\infty$
	for every $\varepsilon\in(0\,,1)$.
\end{example}

\begin{example}\label{ex:Lip:2}
	Choose and fix a $C^1$-function $g$ such that $g$ and $g'$ are Lipschitz.
	This time define $g_{a,b}(u) := bg( u/a )$ for all $b,u\in\R$ and $a>0$, and set
	\[
		\mathscr{G} := \bigcup_{\substack{b\in[-n,n]\\a\in[1/m,m]}}\{g_{a,b}\},
	\]
	where $m>1$ and $n>0$ are fixed real numbers. Because
	$\|g_{a,b}-g_{A,B}\|_{\lip} \lesssim|A-a|+|B-b|,$
	uniformly for all $a,A\in[1/m\,,m]$ and $b,B\in[-n\,,n]$,
	it follows that $\bm{N}_{\mathscr{G}\!,\lip}(r)\lesssim r^{-2}$ uniformly for
	every $r\in(0\,,1)$ and hence
	$\int_0^1[\bm{N}_{\mathscr{G}\!,\lip}(r)]^\varepsilon\,\d r<\infty$
	for every $\varepsilon\in(0\,,1/2)$.
\end{example}

\noindent
{\bf Acknowledgement.} We would like to thank the associate editor and two referees
for their valuable and useful comments.

\end{document}